\setlist[enumerate,1]{label=(\arabic*)} 
\DeclareMathOperator{\FI}{FI}
\DeclareMathOperator{\FA}{FA}
\DeclareMathOperator{\FLA}{FLA}
\DeclareMathOperator{\FAd}{FE}
\DeclareMathOperator{\FLAd}{FLE}
\theoremstyle{plain} 
\newtheorem{thm}{Theorem}[section] 
\newtheorem{theorem}[thm]{Theorem} 
\newtheorem{proposition}[thm]{Proposition}
\newtheorem{lemma}[thm]{Lemma}
\newtheorem{corollary}[thm]{Corollary}
\theoremstyle{definition} 
\newtheorem{definition}[thm]{Definition}
\theoremstyle{remark} 
\newtheorem{remark}[thm]{Remark} 
\newtheorem{example}[thm]{Example}
\newtheorem{question}[thm]{Question}
\providecommand\@dotsep{5}
\title{Forbidden configurations for coherency}
\date{\today}
\author[V. Gould]{Victoria Gould}
\address{University of York}
\email{victoria.gould@york.ac.uk}
\author[M. Johnson]{Marianne Johnson}
\address{University of Manchester}
\email{Marianne.Johnson@manchester.ac.uk}
\date{\today}
\begin{document}
\begin{abstract}
Right (and left) coherency 
and right (and left) weak coherency are natural finitary conditions for monoids. Determining whether or not a given monoid has any of these properties is historically a difficult problem.

This paper has several aims, centering around the well-studied class of right (and dually left) $E$-Ehresmann monoids, being one of the broadest classes of monoids containing a semilattice of idempotents. First, we exhibit a particular configuration of elements in a monoid subsemigroup of a right (respectively, left) $E$-Ehresmann monoid, relative to the Ehresmann structure of the overmonoid, that prohibits left (respectively, right) coherence. Second, we apply this technique in a number of different situations. We show that the free Ehresmann monoid of rank at least $2$ is neither left nor right coherent, and that the free left Ehresmann monoid is not left coherent. We demonstrate the utility of our technique in the case where the overmonoid is an $E$-unitary inverse monoid, and apply this to both new situations and to recover the existing results. Namely,   free inverse monoids and free ample monoids of rank at least 2 are neither left nor right coherent, and free left ample monoids of rank at least $2$ is are not left coherent. Next, in a positive direction,  we demonstrate that every free left Ehresmann monoid is weakly coherent.

Our final result is of a different nature. Ehresmann monoids form a variety of monoids with an enriched signature. Viewed as a {\em bi-unary monoid} (respectively, {\em unary} monoid), a free Ehresmann monoid (respectively, free left Ehresmann monoid) does not embed into an inverse monoid. We show that viewed as a {\em monoid} (the standpoint of this paper) every free Ehresmann monoid (and hence also every free left Ehresmann monoid) embeds into an $E$-unitary inverse monoid.
\end{abstract}
\maketitle

\section{Introduction}
An algebra is {\em coherent} if every finitely generated subalgebra has a finite presentation. We say that a monoid $S$ is
{\em right coherent} if every finitely presented right $S$-act is coherent; here an $S$-act is the non-additive version of an $R$-module over a ring. This notion was  motivated by the notion of a {\em coherent theory} \cite{Wh:1976}. It is a {\em finitary condition} (in that every finite monoid is right coherent), and has subsequently been found to have deep connections with  properties of right $S$-acts related to injectivity and  algebraic closure,  and to other finitary properties \cite{DGHRZ20,DG:2023}. A monoid $S$ is {\em weakly right coherent} if,  regarded as a right $S$-act, $S$ is right coherent.  The notions of right coherency and weak right coherency for monoids both correspond  to right coherency for rings; for monoids they are distinct conditions. Coherency is a vibrant topic, which in other contexts has seen recent breakthroughs,  in particular, for groups \cite{JL:2025}.

Understanding which monoids are right, or  dually left, coherent turns out to be hard. Disparate techniques have been needed in each case to show that monoids in certain classes are right coherent, such as the classes of   free (commutative) monoids, of weakly right noetherian regular monoids, or of free left ample monoids \cite{G,GH,DGHRZ20}. Here we focus on providing a tool which quickly {\em rules out} coherency for  monoids belonging to a wide class. 

The classes of right, left and two-sided $E$-Ehresmann monoids are perhaps the broadest classes of monoids possessing a semilattice of idempotents $E$ and for which useful structure results, and descriptions of free algebras, are known \cite{BGG,GG,K,K2}. We show that  a monoid subsemigroup of a right $E$-Ehresmann monoid  containing a certain configuration of elements  is not left coherent. This reflects the approach of a recent article \cite{BGR23}, but there the monoid is required to contain an infinite subgroup. The  configuration here is quite different and has a different range of applications. Most notably, we can apply it to certain submonoids of $E$-unitary (proper) inverse monoids, even if the inverse monoid is aperiodic (group free), as is the case for free inverse monoids.
We use our techniques to show that a free left Ehresmann and a free Ehresmann monoid on a set containing at least two distinct elements are not left coherent (and by a dual argument neither is the free Ehresmann monoid right coherent). On the positive side we demonstrate that the free left Ehresmann monoid is weakly (left and right) coherent.

All $E$-Ehresmann monoids are possessed with two natural unary operations. Viewed as bi-unary monoids, they do not embed into inverse monoids. We prove the  somewhat surprising result that a free  Ehresmann monoid embeds as a monoid into a semidirect product of a group with a semilattice, that is, into an $E$-unitary inverse monoid.

The paper is structured as follows. In Section \ref{sec:tools} we give all the necessary definitions and preliminary results required. In Section \ref{sec:forbid} we provide a survey of the key tools for proving non-coherence in the existing literature,  giving some useful additions (Lemma \ref{lem:m=n} and Corollary \ref{cor:ghe}) to this suite of general results. In Section \ref{sec:proper} we turn our attention to right $E$-Ehresmann monoids, where we present our main result (Theorem \ref{thm:rightEhres}) demonstrating that a submonoid of a right $E$-Ehresmann monoid containing a particular configuration of elements (relative to the right Ehresmann structure of the over monoid) cannot be left coherent. We show that certain conditions of our main result simplify significantly in the case where the right $E$-Ehresmann monoid is an inverse monoid of the form $Y \rtimes G$ where $Y$ is a semilattice and $G$ is a group (Corollary \ref{cor:pi}) and in the case where the right $E$-Ehresmann monoid is $E$-adequate where $E$ is a unitary subset of idempotents (Corollary \ref{cor:E-adequate}). We show (in Remark~\ref{rem:fi}) that this generalises the negative results of \cite{GH}, and exhibit several further natural situations where our result may be invoked, including the Birget-Rhodes expansions (as considered by Szendrei) of certain groups (Remark~\ref{ex:BR}), the Margolis-Meakin expansions of certain groups (Example \ref{ex:MM}), and certain semidirect products of the form $\mathcal{P}(M) \rtimes M$ where $M$ is a monoid acting on its power set by left multiplication (Corollary \ref{ex:freemonoid}). In Section \ref{sec:adequate} we apply our main result (and its left-right dual) to show that the free Ehresmann monoid of rank at least $2$ is neither left nor right coherent and that the free left Ehresmann monoid of rank at least $2$ is not left coherent (Theorem \ref{cor:adequate}). In Section \ref{sec:weakly} we demonstrate that the free left Ehresmann monoid is weakly (left and right) coherent (Theorem \ref{thm:FLEisweaklycoherent}). In Section \ref{sec:embed} we prove that the free Ehresmann monoid embeds (as a monoid) into an inverse monoid of the form $Y \rtimes G$, where $Y$ is a semilattice with identity and $G$ is a group acting on $Y$ by monoid morphisms (Theorem \ref{thm:two-sided}) and demonstrate that this gives an alternative proof (using a different application of our main result) that the free left Ehresmann monoid of rank at least $2$ is not left coherent (Remark \ref{rem:alt}). The paper concludes in Section \ref{sec:questions} with some open questions.

\section{Preliminaries and tools for non-coherence in monoids}
\label{sec:tools}

We provide a concise summary of essential definitions needed for this paper, followed by some of the existing tools used to determine whether a monoid is right or left coherent.  
Throughout the paper,  $S$ denotes a monoid with set of idempotents $E(S)$; unless otherwise stated we denote the identity of $S$ by $1$ or $1_S$.

Many definitions and results that we give have dual  left- and right-handed versions. We do not normally give both explicitly, except where it is necessary to establish notation or for ease of reference. If a monoid has both left- and right-handed versions of a particular property, we drop the adjectives `left' and `right'. By `dual' we always mean the left-right dual. 

For further details concerning monoids and acts, we refer the reader to the monographs \cite{Ho:1995} and \cite{KKM:2000}, respectively.

\subsection{Preliminaries on acts}

\begin{definition}(Right $S$-acts: subacts, morphisms, and generating sets) A right $S$-act is a set $A$ together with a function $A \times S \rightarrow S, (a, s) \mapsto as$, with the property that $(as)t = a(st)$ and $a1= a$ for all $a \in A$ and all $s, t \in S$. For a set $X \subseteq A$ we write $XS := \{xs : x \in X, s \in S\}$. A subset $B \subseteq A$ is a subact of $A$ if $BS \subseteq B$. A morphism between right $S$-acts $A$ and $B$ is a map $\phi : A \rightarrow B$ such that $(as)\phi = (a\phi)s$ for all $a \in A$ and $s \in S$. The $S$-act $A$ is said to be generated by a set $X \subseteq A$ if $XS = A$, finitely generated if it is generated by a finite set, and monogenic if it is generated by a single element.

If the $S$-act $A$ is a monoid (in every case in this paper a monoid semilattice), and if for each $s\in S$ the function $a\mapsto as$ is a monoid morphism, then we say that $S$ acts by monoid morphisms. \end{definition}
The monoid $S$ may be regarded as a right $S$-act over itself and the subacts of $S$ deserve particular mention: a right ideal of $S$ is a subset $I \subseteq S$ such that 
\[IS := \{xs : x \in I, s \in S\} \subseteq I;\] thus,
right ideals of $S$ are precisely the subacts of the (monogenic) right $S$-act $S$.

For a relation $\rho \subseteq A \times A$ on a set $A$, we will use both the notation $(x, y) \in \rho$ and $x \, \rho \, y$ to mean that $x$ and $y$ are $\rho$-related. If $\rho$ is an equivalence relation, we will write $a \rho$ to denote the equivalence class of an element $a \in A$. 

\begin{definition} (Right $S$-act congruences, quotients, generating sets and right $Y$-sequences)
A (right $S$-act) congruence on a right $S$-act $A$ is an equivalence relation $\rho \subseteq A\times A$ with the property that if $a,b \in A$ with $a \, \rho \,  b$ and $s \in S$, then $as \, \rho \, bs$ also holds. 
If $\rho$ is a congruence on a right $S$-act $A$, then the quotient $A/\rho := \{a\rho: a \in A\}$ is a right $S$-act under $(a\rho)s = (as)\rho$, for all $a \in A$ and all $s \in S$. For a set $Y \subseteq A \times A$ we denote by $\langle Y \rangle$ the right congruence on $A$ generated by $Y$, which is the smallest right congruence that contains $Y$. A right congruence $\rho$ is finitely generated if there is a finite set $Y$ such that $\rho = \langle Y \rangle$. For $a, b \in A$ and $Y \subseteq A\times A$ we say that there exists a (right) $Y$-sequence from $a$ to $b$, if there is a sequence of the form
$$a = c_1t_1, d_1t_1 = c_2t_2,\ldots, d_mt_m = b,$$
where $m \geq 0$, $(c_i, d_i) \in Y$ or $(d_i, c_i) \in Y$, $t_i \in S$, for $i = 1, \ldots, m$. We say that $m$ is the length of the sequence, and the case $m = 0$ is interpreted as $a = b$. \end{definition}

The right $S$-act congruences of $S$ considered as a right $S$-act are precisely the right congruences of $S$ considered as a monoid.    We record the following useful facts concerning finitely generated right  congruences.

\begin{lemma}\label{lem:fg}
Let $A$ be an $S$-act, $\rho$ a right (respectively left) congruence on $A$, and $Y \subseteq A$.
\begin{enumerate}[label=\textnormal{(\arabic*)}]
\item The right (respectively left) congruence $\rho$ is
finitely generated if there is no infinite strictly ascending chain of right (respectively left) congruences $\rho_1 \subset \rho_2 \subset \cdots$ such that $\rho =
\bigcup_{n\geq 1} \rho_n$.
\item The element $(a, b) \in A \times A$ is contained in the right (respectively left) congruence generated by $Y$ if and only if there exists a right (respectively left) $Y$-sequence from $a$ to $b$.
\end{enumerate}
\end{lemma}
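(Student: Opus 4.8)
The plan is to prove part (2) first, since it supplies the combinatorial engine for part (1). For (2), writing $Y\subseteq A\times A$ for the set of generating pairs, I would define a relation $\sigma$ on $A$ by declaring $a\,\sigma\,b$ precisely when there is a right $Y$-sequence from $a$ to $b$, and then show that $\sigma=\langle Y\rangle$ by verifying that $\sigma$ is a right congruence containing $Y$ and that it is contained in every right congruence containing $Y$; being the smallest such, it must coincide with $\langle Y\rangle$. Reflexivity of $\sigma$ is the case $m=0$; symmetry follows because the defining clause permits $(c_i,d_i)\in Y$ or $(d_i,c_i)\in Y$, so a sequence from $a$ to $b$ read backwards is a sequence from $b$ to $a$; transitivity is concatenation. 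Right compatibility is the one computation to record: multiplying a sequence $a=c_1t_1,\ d_1t_1=c_2t_2,\ \dots,\ d_mt_m=b$ on the right by $s$ and using $(c_it_i)s=c_i(t_is)$ produces a $Y$-sequence from $as$ to $bs$ with witnesses $t_is$. That $Y\subseteq\sigma$ is witnessed by length-one sequences with $t_1=1$. Finally, if $\tau$ is any right congruence with $Y\subseteq\tau$, then along any $Y$-sequence each step gives $c_i\,\tau\,d_i$ (as $\tau$ is symmetric), hence $c_it_i\,\tau\,d_it_i$ by compatibility, and transitivity through the equalities $d_it_i=c_{i+1}t_{i+1}$ yields $a\,\tau\,b$; thus $\sigma\subseteq\tau$.

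With (2) in hand I would deduce (1). The easy implication is that finite generation precludes such a chain: if $\rho=\langle F\rangle$ with $F$ finite and $\rho=\bigcup_{n}\rho_n$ for an ascending chain of right congruences, then each of the finitely many pairs of $F$ lies in some $\rho_n$, so all of $F$ lies in a single $\rho_N$; as $\rho_N$ is a right congruence this forces $\rho=\langle F\rangle\subseteq\rho_N\subseteq\rho$, whence $\rho_N=\rho$ and the chain is eventually constant. Its contrapositive, namely that a strictly ascending chain with union $\rho$ certifies that $\rho$ is not finitely generated, is the form actually invoked in the non-coherence applications. For the direction as stated I would argue contrapositively: assuming $\rho$ is not finitely generated, enumerate a generating set $\{g_1,g_2,\dots\}$ of $\rho$, set $F_n=\{g_1,\dots,g_n\}$ and $\rho_n=\langle F_n\rangle$. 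Part (2) is exactly what shows $\bigcup_n\rho_n=\langle\bigcup_nF_n\rangle=\rho$, since any pair of $\rho$ is joined by a \emph{finite} $Y$-sequence using only finitely many generators, all of which lie in some $F_n$. As $\rho$ is not finitely generated the chain cannot stabilise, so passing to the cofinal subchain of indices at which it strictly increases yields a strictly ascending chain whose union is still $\rho$.

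The main obstacle is this converse direction, and specifically the requirement that the union of the constructed chain be \emph{all} of $\rho$ rather than merely a subcongruence. The finiteness of $Y$-sequences established in (2) is precisely what overcomes it, guaranteeing $\bigcup_n\langle F_n\rangle=\langle\bigcup_nF_n\rangle$ so that enumerating a generating set suffices. This argument relies on $\rho$ admitting a \emph{countable} generating set; I would make this hypothesis explicit, noting that it is automatic in every setting of interest here, where the act $A$ (and the monoid $S$) is countable, so that $A\times A$ itself is countable and no congruence can require an uncountable cofinal family of generators.
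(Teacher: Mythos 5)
The paper does not prove Lemma~\ref{lem:fg} at all --- it is ``recorded'' as a standard fact --- so there is no proof of record to compare against; your job is therefore to supply a complete argument, and for the most part you do. Your proof of part (2) (closing the set of pairs joined by a $Y$-sequence into a relation $\sigma$, checking it is a right congruence containing $Y$ and is contained in every right congruence containing $Y$) is the standard argument and is correct. For part (1), note that the direction the paper actually invokes (in Lemma~\ref{lem:m=n}, Theorem~\ref{thm:rightEhres} and Proposition~\ref{prop:triangle}) is: \emph{if} a strictly ascending chain of congruences with union $\rho$ exists, \emph{then} $\rho$ is not finitely generated. Read literally, the lemma's ``if'' states the converse of this, so the statement is really functioning as a biconditional; your ``easy implication'' (a finite generating set lies inside some $\rho_N$ of any ascending chain whose union is $\rho$, forcing $\rho_N=\rho$) is precisely the implication the paper needs, and you prove it correctly and in full generality.

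The one genuine soft spot is your treatment of the direction as literally stated, where you build the chain by enumerating a countable generating set $\{g_1,g_2,\dots\}$ and taking $\rho_n=\langle g_1,\dots,g_n\rangle$. You are right that this needs countable generation and honest in flagging it, but your justification --- that the act and monoid are countable ``in every setting of interest here'' --- is not accurate for this paper: the lemma is applied inside arguments about $\mathcal{S}(G)=\mathcal{P}(G)\rtimes G$ for arbitrary groups $G$, symmetric inverse and transformation monoids on infinite sets, and free Ehresmann monoids of arbitrary rank, all of which may be uncountable. Moreover, even granting countable generation, your construction only guarantees a weakly ascending chain until you pass to the subsequence of strict increases, which you do note. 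Since the uncountable case never arises in the direction the paper uses, this does not damage any result in the paper, but if you want the lemma as stated you should either restrict (1) to the implication actually used, state it as a biconditional with the countability hypothesis made explicit for the harder direction, or replace the enumeration by a construction that works in general (for instance, choosing $g_{n+1}\in\rho\setminus\langle g_1,\dots,g_n\rangle$ repeatedly only yields a chain whose union may be a proper subcongruence, so some additional padding argument is required).
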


\begin{definition}(Free and finitely presented right $S$-acts)
Since right $S$-acts form a variety of algebras, the free right $S$-act $F_S(X)$ over any set $X$ exists; concretely, $F_S(X) = X \times S$ with the action of $S$ given by $(x, s) \cdot t = (x, st)$. A right $S$-act $A$ is finitely presented if it is isomorphic to $F_S(X)/\rho$ for some finite set $X$ and finitely generated congruence $\rho$.
\end{definition}

\begin{definition}(Right coherence and weak right coherence)
\label{sec:weak}
A monoid $S$ is right coherent if every finitely generated subact of every finitely presented right  $S$-act is finitely presented. A monoid $S$ is weakly right coherent if every finitely generated right ideal of S is finitely presented as a right $S$-act.
\end{definition}
Clearly every right coherent monoid is in particular weakly right coherent.

\subsection{Preliminaries on monoids}

\begin{definition}\label{defn:green} (Green's relations $\mathcal{L},\, \mathcal{R}$) 
    For a monoid $S$, Green’s preorder $\leq_{\mathcal{R}}$ on $S$ is defined by $a \leq_{\mathcal{R}} b$ if $aS \subseteq bS$ and Green’s equivalence $\mathcal{R}$ is the equivalence relation induced by the preorder $\leq_\mathcal{R}$, that is, $a\,  \mathcal{R}\,  b$ if and only if $aS = bS$.  The  relation $\mathcal{R}$  is a left congruence. The dual relations to   $\leq_\mathcal{R}$ and  $\mathcal{R}$ are denoted by  $\leq_{\mathcal{L}} $ and $\mathcal{L}$, respectively.
    \end{definition}

    \begin{definition}\label{defn:reg} (Regular, inverse) 
    An element $x \in S$ is said to be \emph{regular} if there exists $y \in S$ such that $xyx = x$;  then $S$ is \emph{regular } if every $x \in S$ is regular. An equivalent definition of regularity of $S$ is that every $\mathcal{L}$-class (dually, every $\mathcal{R}$-class) contains an idempotent. A regular element $x \in S$ has an \emph{inverse} $z$, that is, $z$ satisfies $xzx = x$ and $zxz = z$. If every element has a unique inverse, then $S$ is said to be  \emph{inverse } and in this case we write $x^{-1}$ to denote the unique inverse of $x$. Equivalently, a semigroup  is inverse if it is regular and the idempotents form a semilattice, that is, a commutative semigroup of idempotents. The term semilattice is used since the idempotents are then partially ordered under $e\leq f$ if $ef=e$, with $ef$ being the greatest lower bound of any $e,f\in E(S)$.
\end{definition}

Inverse monoids feature in several ways in this work by providing  important oversemigroups within which we may embed monoids from wider classes. All of those we consider are in fact $E$-unitary (otherwise known as proper).

\begin{definition}\label{defn:eunitary}(Right unitary, $E$-unitary)  A subset $F\subseteq  E(S)$ of a monoid  $S$ is right 
unitary if $e,ae\in F$ implies that $a\in F$. We note that if $F=E(S)$ this is equivalent to the dual condition and in this case we say that $S$ is $E$-unitary.
    \end{definition}

We occasionally make use of the notion of being $E$-unitary per se, but more often we are interested in $E$-unitary inverse monoids, which relate to semidirect products via a  key result of O'Carroll, given as Theorem~\ref{thm:pi} below.

\begin{definition} (Semidirect product)  Let $S$ be a monoid acting on the left of a monoid semilattice $(Y, \wedge)$ by monoid morphisms. Then $Y\rtimes S:= Y\times S$ is a monoid  under 
$(y,s)(z,t)=(y\wedge s z, st)$ with identity $(1_Y,1_S)$.
\end{definition}

\begin{thm}\label{thm:pi}\cite{Oc:1976}  An inverse monoid is $E$-unitary if and only if it embeds (as an inverse semigroup) into a semidirect product $Y\rtimes G$, where $Y$ is a semilattice with identity and $G$ is a group acting by monoid morphisms. In this case, in $Y \rtimes G$  we have  $(y,g)^{-1} = (g^{-1}y, g^{-1})$, 
  $(y,g)(y,g)^{-1}=(y,1_G)$,
   $(y,g)^{-1}(y,g)=(g^{-1}y,1_G)$
  and set of idempotents $E = \{(y, 1_G): y \in Y\}$.  \end{thm}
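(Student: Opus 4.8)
The plan is to prove both implications of the stated equivalence, treating the displayed identities as part of the easy direction.

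\emph{Sufficiency.} First I would verify by direct computation in $Y \rtimes G$ that it is an $E$-unitary inverse monoid and that the displayed formulas hold. Using $(y,g)(z,h) = (y \wedge g z, gh)$ one checks that $(g^{-1}y, g^{-1})$ satisfies the two inverse axioms with $(y,g)$, that $(y,g)(y,g)^{-1} = (y,1_G)$ and $(y,g)^{-1}(y,g) = (g^{-1}y,1_G)$, and that $(w,h)$ is idempotent exactly when $h = 1_G$; since the idempotents $(y,1_G)$ plainly commute, $Y \rtimes G$ is inverse with semilattice $E = \{(y,1_G) : y \in Y\}$. For $E$-unitarity, if $(y,1_G)$ and $(w,h)(y,1_G) = (w \wedge hy, h)$ are both idempotent then $h = 1_G$, so $(w,h)$ is idempotent. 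Finally, being $E$-unitary passes to inverse subsemigroups, since homomorphisms preserve and injective homomorphisms reflect idempotency; hence if $S$ embeds as an inverse subsemigroup of $Y \rtimes G$ then $S$ is itself $E$-unitary, giving the ``if'' direction.

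\emph{Necessity, set-up.} For the converse I would produce an explicit embedding. Let $\sigma$ be the minimum group congruence on $S$, put $G = S/\sigma$, and write $a \mapsto a\sigma$ for the canonical morphism. The hypothesis that $S$ is $E$-unitary is exactly the statement that $\sigma$ is idempotent-pure, that is, $a\,\sigma\,e$ with $e \in E(S)$ forces $a \in E(S)$. The target is a map $a \mapsto (f(a), a\sigma) \in Y \rtimes G$ for a suitable $G$-semilattice $Y$ and function $f \colon S \to Y$. Comparing with the multiplication in $Y \rtimes G$, this is a homomorphism precisely when $f(ab) = f(a) \wedge (a\sigma)f(b)$ for all $a,b \in S$, and it is injective as soon as $f$ is one-to-one on each $\sigma$-class.

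\emph{Construction of $Y$ and $f$.} To build $Y$ I would invoke the structure theory of $E$-unitary inverse semigroups: by McAlister's $P$-theorem, $S$ is isomorphic to a $P$-semigroup $P(G,\mathcal{X},\mathcal{Y})$, where $\mathcal{X}$ is a poset on which $G$ acts by order automorphisms, $\mathcal{Y}$ is an order ideal and subsemilattice of $\mathcal{X}$ isomorphic to $E(S)$ with $G\mathcal{Y} = \mathcal{X}$, and $S \cong \{(e,g) : e,\, g^{-1}e \in \mathcal{Y}\}$ with product $(e,g)(h,k) = (e \wedge gh, gk)$, the meet being computed in $\mathcal{X}$ (it exists because $g^{-1}e \wedge h$ exists in the semilattice $\mathcal{Y}$). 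It then suffices to embed the $G$-poset $\mathcal{X}$, $G$-equivariantly and meet-preservingly, into a genuine semilattice carrying a total $G$-action. For this I would take $Y$ to be the set of down-sets of $\mathcal{X}$, a semilattice under intersection with identity (top element) $\mathcal{X}$, on which $G$ acts by $g \cdot A = gA$ through monoid morphisms; the principal-down-set map $x \mapsto {\downarrow}x$ is an order embedding, is $G$-equivariant, and sends existing meets to intersections. Putting $f(e,g) = {\downarrow}e$ then yields the embedding $(e,g) \mapsto ({\downarrow}e, g)$ of $S$ into $Y \rtimes G$: the homomorphism identity reduces to ${\downarrow}e \cap {\downarrow}(gh) = {\downarrow}(e \wedge gh)$, while injectivity follows because $\sigma$-related elements share their $G$-coordinate and $x \mapsto {\downarrow}x$ is one-to-one.

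\emph{Main obstacle.} The sufficiency direction is routine computation, so the real work lies in necessity, and the crux is manufacturing an honest semilattice with a \emph{total} $G$-action out of the merely partial, poset-level meet data supplied by the $P$-theorem while keeping the embedding faithful: the products $e \wedge gh$ occurring in $S$ need only exist in the ambient poset $\mathcal{X}$, not within $\mathcal{Y} \cong E(S)$, so one cannot simply take $Y = E(S)$. Passing to down-sets repairs this, and the step I expect to demand the most care is checking that this completion preserves exactly the meets arising in the multiplication and collapses nothing, so that both the homomorphism property and injectivity survive. One could alternatively replace the appeal to the $P$-theorem by a direct construction of the $G$-poset $\mathcal{X}$ from the $\sigma$-classes and idempotents of $S$, at the cost of additional bookkeeping.
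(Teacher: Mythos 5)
The paper does not prove this statement: it is quoted verbatim as a classical theorem of O'Carroll with a citation to \cite{Oc:1976}, so there is no in-paper argument to compare against. Your proposal is, as far as I can check, a correct reconstruction. The sufficiency half is the routine verification you describe: the computations of $(y,g)^{-1}$, of the two products $(y,g)(y,g)^{-1}$ and $(y,g)^{-1}(y,g)$, and of $E(Y\rtimes G)$ all check out against the multiplication $(y,g)(z,h)=(y\wedge gz,gh)$, the $E$-unitarity of $Y\rtimes G$ is immediate from the group coordinate, and $E$-unitarity does pass to inverse subsemigroups because $E(S)=S\cap E(T)$ for an inverse subsemigroup $S$ of $T$. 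For necessity, your route --- McAlister's $P$-theorem to realise $S$ as $P(G,\mathcal{X},\mathcal{Y})$, followed by the order-ideal (down-set) completion of the $G$-poset $\mathcal{X}$ into a semilattice with a total $G$-action, and the embedding $(e,g)\mapsto({\downarrow}e,g)$ --- is the standard derivation and is essentially how O'Carroll's embedding theorem is usually obtained; the key identity ${\downarrow}e\cap{\downarrow}(gh)={\downarrow}(e\wedge gh)$ holds because the meets appearing in the $P$-semigroup multiplication are genuine greatest lower bounds in $\mathcal{X}$, and the $G$-action on down-sets fixes the identity $\mathcal{X}$ of $Y$, so the action is indeed by monoid morphisms as the statement requires. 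The only caveats worth recording are that your argument outsources the substantial content to the $P$-theorem (which is a strictly stronger structural result than what is needed here; O'Carroll's and Schein-style proofs can be made more self-contained by building the $G$-poset directly from $\sigma$-classes, as you note), and that the identity of $S$ need not map to the identity of $Y\rtimes G$ --- which is consistent with the theorem's phrasing ``embeds as an inverse semigroup'' rather than as a monoid.
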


\begin{definition}(Right annihilator congruences, the relations $\mathcal{L}^*$ and $\mathcal{R}^*$,  right abundance)  For $a \in S$ and a right congruence $\rho$ on $S$, let
$$\mathbf{r}(a\rho) := \{(u, v) \in  S \times S : au 
\, \rho \, av\}.$$
It is straightforward to check that $\mathbf{r}(a\rho)$ is a right congruence; we call this the `right annihilator congruence of $a$ with respect to $\rho$'. In the case where $\rho$ is equality, we will simply write $\mathbf{r}(a)$ and refer to this as `the right annihilator congruence of $a$'. We denote by $\mathcal{L}^*$ the equivalence relation on $S$ defined (for all $a,b \in S$) by $a \,\mathcal{L}^*\, b$ if $\mathbf{r}(a) = \mathbf{r}(b)$. The relation $\mathcal{L}^*$ is an extension  of Green's $\mathcal{L}$-relation, in the sense that $\mathcal{L}\subseteq \mathcal{L}^*$, and is a right congruence. We say that $S$ is right abundant if every $\mathcal{L}^*$-class of $S$ contains an idempotent. If $e \in S$ is idempotent, then $(e,1) \in \mathbf{r}(e)$ and hence $ae=a$ for all $ a\in S$ with $a \, \mathcal{L}^* \, e$.
The 
dual of $\mathcal{L}^*$ is denoted by $\mathcal{R}^*$.
\end{definition}

If the set of idempotents $E(S)$ of any monoid $S$ is a semilattice, then (as for $\mathcal{L}$) it is not hard to see that each $\mathcal{L}^*$-class a contains at most one idempotent.

We present the first of several extensions of the class of inverse monoids. In each case, we retain a semilattice of idempotents, but weaken the condition of regularity.

\begin{definition} (Right ample monoids)  A monoid $S$ is said to be \emph{right ample} if the idempotents form a semilattice, for each $a \in S$ the $\mathcal{L}^*$-class of $a$ contains a unique idempotent $a^*$ and the right ample identity  $ea = a(ea)^*$  holds for all $a \in S$ and $ e \in E(S)$.  Dually, $S$ is said to be \emph{left ample} if the idempotents form a semilattice, for each $a \in S$ the $\mathcal{R}^*$-class of $a$ contains a unique idempotent $a^+$ and the left  ample identity $ae = (ae)^+a$ holds for all $a \in S$ and all $ e \in E(S)$. A semigroup  is ample if it is both left and right ample.
\end{definition}

Right ample monoids may also be defined as quasi-varieties of unary monoids (that is, monoids equipped with an additional basic unary operation) and dually for left ample monoids. Similarly ample monoids may  be defined as quasi-varieties of  bi-unary monoids (that is, monoids equipped with two additional basic unary operations). In this article it is convenient to treat right (left, two-sided) ample monoids as  monoids.

If $S$ is inverse, then $S$ is ample with $\mathcal{R}=\mathcal{L}^*$,\, $\mathcal{L}=\mathcal{L}^*$, 
$a^*=a^{-1}a$ and $a^+=aa^{-1}$.

\subsection{Necessary and sufficient conditions for (weak) right coherency}
In this subsection we focus on stating results for (weak) right coherence. In each case there is a corresponding left-right dual result for (weak) left coherence.   
\begin{theorem}
\cite[Corollary 3.4]{G}
\label{thm:coherent}
Let $S$ be a monoid. The following are equivalent:
\begin{enumerate}[label=\textnormal{(\arabic*)}]
\item $S$ is right coherent;
\item for any finitely generated right congruence $\rho$ on $S$ and any elements $a, b \in S$:
\begin{enumerate}[label=\textit{(\roman*)}]
    \item the subact $(a\rho)S \cap (b\rho)S$ of the right $S$-act $S/\rho$ is finitely generated;
\item
$\mathbf{r}(a\rho)$ is a finitely generated right congruence on $S$.
\end{enumerate}
\end{enumerate}
\end{theorem}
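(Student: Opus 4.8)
The plan is to follow the module-theoretic template for coherence (Chase's theorem), where a ring is right coherent precisely when the annihilator of each element and the intersection of any two finitely generated right ideals are finitely generated. Here condition (2)(ii) is the act-theoretic analogue of finitely generated annihilators, and (2)(i) of finitely generated intersections; the substance of the statement is that these two conditions, tested only on the monogenic finitely presented acts $S/\rho$, already force every finitely generated subact of every finitely presented act to be finitely presented. I would prove the two implications separately, treating (1) $\Rightarrow$ (2) as the routine direction and (2) $\Rightarrow$ (1) as the one carrying the real work.

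For (1) $\Rightarrow$ (2), fix a finitely generated right congruence $\rho$; then $S/\rho$ is monogenic (generated by $1\rho$) and finitely presented. For (ii), the map $s \mapsto (a\rho)s = (as)\rho$ induces an $S$-act isomorphism $S/\mathbf{r}(a\rho) \cong (a\rho)S$; since $(a\rho)S$ is a cyclic, hence finitely generated, subact of the finitely presented act $S/\rho$, right coherence makes it finitely presented, and a monogenic act $S/\theta$ is finitely presented exactly when $\theta$ is finitely generated, giving (ii). For (i), the subact $(a\rho)S \cup (b\rho)S$ is generated by two elements, so right coherence makes it finitely presented; mapping $F_S(\{x,y\})$ onto it by $x \mapsto a\rho$ and $y \mapsto b\rho$, the kernel congruence $\sigma$ is then finitely generated, and a short pullback argument shows that the ``mixed'' generating pairs of $\sigma$ (those relating an $x$-term to a $y$-term) yield a finite generating set for the intersection $(a\rho)S \cap (b\rho)S$.

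For (2) $\Rightarrow$ (1), I would argue by induction on the number of generators. Given a finitely presented act $A$ and a finitely generated subact $B = \beta_1 S \cup \cdots \cup \beta_n S$, the case $n = 1$ is $B \cong S/\mathbf{r}(\beta_1)$, finitely presented once $\mathbf{r}(\beta_1)$ is finitely generated; the inductive step writes $B = B' \cup \beta_n S$ with $B' = \beta_1 S \cup \cdots \cup \beta_{n-1}S$ finitely presented by induction, the overlap $B' \cap \beta_n S = \bigcup_{i<n} (\beta_i S \cap \beta_n S)$ being a finite union of finitely generated subacts and hence finitely generated. A standard amalgamation (pushout) lemma---that a union of finitely presented subacts along a finitely generated intersection is finitely presented---then delivers $B$. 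Concretely one builds the presentation from the finite generating sets of the annihilators $\mathbf{r}(\beta_i)$ and of the pairwise intersections, verifying via $Y$-sequences (Lemma~\ref{lem:fg}(2)) that these finitely many pairs generate the full kernel of $F_S(\{x_1,\dots,x_n\}) \to B$.

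The crux, and what I expect to be the main obstacle, is that (i) and (ii) are assumed only for the monogenic acts $S/\rho$, whereas the induction requires finite generation of $\mathbf{r}(\beta)$ and of intersections $\beta S \cap \gamma S$ for elements $\beta,\gamma$ of an arbitrary finitely presented $A = F_S(X)/\tau$. Transferring the hypotheses from the monogenic setting to general $A$ is delicate precisely because a defining $\tau$-sequence witnessing $\beta u \mathrel{\tau} \beta v$ may pass through components of $F_S(X)$ indexed by generators other than the one carrying $\beta$, so the ``component of $\tau$ at $x_i$'' need not be visibly finitely generated even though $\tau$ is. Showing that the monogenic conditions nevertheless control the annihilators and intersections inside every finitely presented act is where the careful combinatorial bookkeeping with $Y$-sequences concentrates, and it is the step I would expect to demand the most effort.
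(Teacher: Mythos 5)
The paper does not actually prove this statement; it quotes it verbatim from \cite[Corollary 3.4]{G}, so there is no in-paper argument to compare against. Judged on its own terms, your proposal is sound and essentially complete for (1) $\Rightarrow$ (2): the isomorphism $(a\rho)S \cong S/\mathbf{r}(a\rho)$, the fact that a monogenic act $S/\theta$ is finitely presented exactly when $\theta$ is finitely generated, and the extraction of generators of $(a\rho)S \cap (b\rho)S$ from the ``mixed'' pairs of a finite presentation of the two-generator subact $(a\rho)S \cup (b\rho)S$ all work as you describe. The lemma underlying your (2) $\Rightarrow$ (1) --- that a subact $B=\beta_1S\cup\cdots\cup\beta_nS$ is finitely presented as soon as each $\mathbf{r}(\beta_i)$ and each $\beta_iS\cap\beta_jS$ is finitely generated --- is also correct, and can be proved directly with one $Y$-sequence argument rather than via your induction-plus-pushout scaffolding.

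The genuine gap is precisely the step you flag in your last paragraph and then leave undone: transferring hypotheses (2)(i)--(ii), which concern only elements of \emph{monogenic} acts $S/\rho$ with $\rho$ finitely generated, to elements $\beta,\gamma$ of an arbitrary finitely presented $A=F_S(X)/\tau$ with $|X|\ge 2$. Writing $\beta=((x_i,1)\tau)s$, one has $\mathbf{r}(\beta)=\mathbf{r}(s\rho_i)$ where $\rho_i=\{(u,v)\in S\times S: (x_i,u)\,\tau\,(x_i,v)\}$ is the trace of $\tau$ on the $i$-th component; hypothesis (2)(ii) applies only if $\rho_i$ is finitely generated, which is not automatic --- a $\tau$-sequence between $(x_i,u)$ and $(x_i,v)$ may travel through other components, so the right congruence generated by the $i$-th-component pairs of a finite generating set of $\tau$ may be strictly contained in $\rho_i$ --- and indeed finite generation of $\rho_i$ is equivalent to finite presentability of the cyclic subact $(x_i\tau)S$, i.e.\ is an instance of the very conclusion being proved. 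The same difficulty defeats the base case $n=1$ of your induction (which needs $\mathbf{r}(\beta_1)$ finitely generated for $\beta_1$ in a multi-generator act) and the finite generation of $B'\cap\beta_nS$ required by your amalgamation step, so the induction as set up does not close. This many-generators-to-one reduction is the actual content of \cite[Corollary 3.4]{G} beyond the elementary Chase-type characterisation, and it needs a separate inductive argument deploying both hypotheses simultaneously; observing that it ``demands the most effort'' is not a substitute for carrying it out.
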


\medskip
In the case where $S$ is regular, there is a sufficient condition for right  coherency,  phrased simply in terms of the right ideal structure of $S$. For a right  ideal $I$ of $S$, $\rho$ a right congruence on $S$ and $x \in I$, let us write $I\rho :=\{s \in S: s \,\rho\, a \mbox{ for some } a \in I\}$ and $[I, x] :=\{s \in S: xs \in I\}$; it is easy to see that these are also right ideals of $S$.

\begin{proposition}\cite[Theorem 3.2]{DGHRZ20}
\label{prop:regcoherent}
Let $S$ be a regular monoid. If for every finitely generated right congruence $\rho$ and all $a, b, x, y \in S$  the right ideals  $(aS)\rho \cap (bS)\rho$ and  \\ $[aS, x] \cap [bS, y]$ are finitely generated, then $S$ is right coherent.
\end{proposition}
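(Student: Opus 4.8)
The plan is to verify the two conditions of Theorem~\ref{thm:coherent}: that for every finitely generated right congruence $\rho$ and all $a,b\in S$ the subact $(a\rho)S\cap(b\rho)S$ of $S/\rho$ is finitely generated, and that $\mathbf{r}(a\rho)$ is a finitely generated right congruence. The first of these is the easy half and does not even need regularity. I would first note that for any right ideal $I$ the set $I\rho$ is again a right ideal (if $s\,\rho\,c$ with $c\in I$ and $t\in S$, then $st\,\rho\,ct\in I$), so $(aS)\rho\cap(bS)\rho$ is a right ideal, finitely generated by hypothesis, say by $w_1,\dots,w_n$. Since $(aS)\rho$ is $\rho$-saturated, $(a\rho)S$ is precisely its image under the quotient map $S\to S/\rho$, and hence $(a\rho)S\cap(b\rho)S$ is the image of $(aS)\rho\cap(bS)\rho$. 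As every element of this ideal has the form $w_is$, its image is generated by $w_1\rho,\dots,w_n\rho$, giving condition (i).

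The substance of the argument is the annihilator condition, and this is where regularity enters. Fixing $a\in S$, I would choose an inverse $a'$ and set $f=aa'$, an idempotent satisfying $fa=a$, so that $a=fa\in fS$ and $f=aa'\in aS$. From $a=fa$ and $f=aa'$ one gets $a\rho=(f\rho)a$ and $f\rho=(a\rho)a'$ in $S/\rho$, so the cyclic subacts $(a\rho)S$ and $(f\rho)S$ coincide. Since $S/\mathbf{r}(a\rho)\cong(a\rho)S=(f\rho)S\cong S/\mathbf{r}(f\rho)$ exhibits a single cyclic act with two distinguished generators, and finite presentability of a cyclic act is independent of the chosen generator, $\mathbf{r}(a\rho)$ is finitely generated if and only if $\mathbf{r}(f\rho)$ is. This reduces condition (ii) to the case of an idempotent $f$.

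For idempotent $f$ I would compute the annihilator explicitly. Writing $\rho|_{fS}:=\rho\cap(fS\times fS)$, I claim $\mathbf{r}(f\rho)=\langle(f,1)\rangle\vee\langle\rho|_{fS}\rangle$. Indeed $(f,1)\in\mathbf{r}(f\rho)$, and $\rho|_{fS}\subseteq\mathbf{r}(f\rho)$ since $fx=x$ and $fy=y$ for $x,y\in fS$; conversely, for $(u,v)\in\mathbf{r}(f\rho)$ the pair $(u,fu)$ lies in $\langle(f,1)\rangle$, the pair $(fu,fv)$ lies in $\rho|_{fS}$ because $fu\,\rho\,fv$ with both terms in $fS$, and $(fv,v)\in\langle(f,1)\rangle$, so transitivity places $(u,v)$ in the right-hand side. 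As $(f,1)$ is a single pair, it remains to prove that $\rho|_{fS}$ is a finitely generated right congruence.

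This last point is the main obstacle, and it is where the second ideal hypothesis is used. Given $x,y\in fS$ with $x\,\rho\,y$, Lemma~\ref{lem:fg}(2) provides a $Y$-sequence $x=c_1t_1,\ d_1t_1=c_2t_2,\ \dots,\ d_mt_m=y$ for a fixed finite generating set $Y$ of $\rho$. The difficulty is that the interior terms $d_jt_j=c_{j+1}t_{j+1}$ need not lie in $fS$ and the sequences may be arbitrarily long, so one cannot simply read off finitely many generators. My strategy would be to control the junctions: the requirement $c_{j+1}t_{j+1}\in d_jS$ forces $t_{j+1}$ into the right ideal $[d_jS,c_{j+1}]$, while a term constrained by both of its neighbouring junctions lies in an intersection $[d_jS,c_{j+1}]\cap[c_{j+2}S,d_{j+1}]$, which is finitely generated by hypothesis. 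Finite generation of these junction ideals bounds the possible shapes of the $Y$-sequences up to right multiplication, so that the unboundedly many sequences can be generated by finitely many basic pairs lying in $fS\times fS$; assembling these together with $(f,1)$ yields a finite generating set for $\mathbf{r}(f\rho)$. The delicate bookkeeping in this normalisation — keeping the rewriting inside $fS$ while invoking only the finitely many ideal generators at each junction — is the part I expect to require the most care, and it is exactly why the hypothesis is stated for pairwise intersections $[aS,x]\cap[bS,y]$ rather than for single ideals.
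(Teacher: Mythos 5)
Your first three paragraphs are essentially right, and they isolate the correct skeleton: condition (i) of Theorem~\ref{thm:coherent} does follow from finite generation of $(aS)\rho\cap(bS)\rho$ exactly as you describe (the saturation of $(aS)\rho$ making the image of the intersection equal to the intersection of the images); the reduction of condition (ii) to idempotents via $f=aa'$, $(a\rho)S=(f\rho)S$, and the independence of finite presentability of a monogenic act from the choice of generator is legitimate; and the identity $\mathbf{r}(f\rho)=\bigl\langle\{(f,1)\}\cup\bigl(\rho\cap(fS\times fS)\bigr)\bigr\rangle$ checks out. (For the record, the present paper gives no proof of this statement at all --- it is imported wholesale from \cite[Theorem 3.2]{DGHRZ20} --- so the comparison has to be with the argument there.)

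The genuine gap is that your final paragraph is where the entire content of the theorem lives, and what you write there is a restatement of the conclusion rather than a derivation of it. Writing each interior multiplier as $t_{j+1}=ws$ with $w$ drawn from a fixed finite generating set of $[d_jS,c_{j+1}]\cap[c_{j+2}S,d_{j+1}]$ replaces the generators $(c_{j+1},d_{j+1})$ of $\rho$ by the still-finite set of pairs $(c_{j+1}w,d_{j+1}w)$, but it does not ``bound the possible shapes of the $Y$-sequences up to right multiplication'': the residual factors $s_1,\dots,s_m$ vary from step to step with no common right factor to peel off, the length $m$ is unbounded, the interior terms still leave $fS$, and the normalised pairs $(c_{j+1}w,d_{j+1}w)$ lie neither in $fS\times fS$ nor in $\mathbf{r}(f\rho)$, so at the end of your normalisation no finite candidate generating set for $\mathbf{r}(f\rho)$ has been exhibited, let alone shown to generate it. What is actually needed (and what \cite{DGHRZ20} does) is to write down a finite set of pairs in advance --- built from the finitely many ideal generators $w$, chosen witnesses $z$ for memberships such as $c w\in pS$ (i.e.\ $cw=pz$), and the pair coming from $\mathbf{r}(a)=\langle(a'a,1)\rangle$ --- and then prove by induction on the length of the $Y$-sequence that every pair of $\mathbf{r}(a\rho)$ is a consequence of that set; the delicate point is the splice at each junction, where one knows $c_{j+1}t_{j+1}=c_{j+1}ws$ but not $t_{j+1}=ws$, so the tail of the sequence cannot simply be rewritten and must instead be absorbed by a carefully strengthened induction hypothesis. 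You have correctly identified which right ideals the hypothesis must be applied to, but the step from ``these ideals are finitely generated'' to ``$\mathbf{r}(f\rho)$ is finitely generated'' is the theorem, and it is missing.
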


\begin{theorem}
\cite[Corollary 3.3]{G}
\label{thm:weakcoherent}
Let $S$ be a monoid. The following are equivalent:
\begin{enumerate}[label=\textnormal{(\arabic*)}]
\item $S$ is weakly right coherent;
\item for all $a, b \in S$:
\begin{enumerate}
    \item[(i)] $aS \cap bS$ is finitely generated;
\item[(ii)]
$\mathbf{r}(a)$ is a finitely generated right congruence on $S$.
\end{enumerate}
\end{enumerate}
\end{theorem}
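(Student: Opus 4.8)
The plan is to characterise finite presentability of a finitely generated right ideal $I = a_1 S \cup \cdots \cup a_n S$ directly, via the natural surjection $\pi \colon F_S(X) \to I$ from the free right $S$-act on $X = \{x_1, \ldots, x_n\}$ determined by $(x_i, s) \mapsto a_i s$. Since $I = F_S(X)/\ker\pi$ is already finitely generated, it is finitely presented precisely when $\ker \pi$ is a finitely generated right congruence, and throughout I would exploit the fact (Lemma \ref{lem:fg}(2)) that membership in a congruence generated by a set $Y$ is witnessed by a $Y$-sequence. The whole proof then reduces to translating finite generation of $\ker \pi$ into conditions (i) and (ii), with the relevant data localised to the single-generator case $X = \{x\}$ (giving (ii)) and the two-generator case $X = \{x, y\}$ (giving (i)).

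For $(1) \Rightarrow (2)$: condition (ii) is immediate, since for the principal right ideal $aS$ the map $(x, s) \mapsto as$ identifies $\ker \pi$ with $\mathbf{r}(a)$ under $(x,s) \leftrightarrow s$; weak coherence makes $aS$ finitely presented, so $\mathbf{r}(a)$ is finitely generated. For condition (i), I would apply weak coherence to $I = aS \cup bS$ with $X = \{x, y\}$, so that $\ker \pi = \langle Y\rangle$ for some finite $Y$. Given $c \in aS \cap bS$, say $c = as = bt$, the pair $((x,s),(y,t))$ lies in $\ker \pi$ and hence is joined by a $Y$-sequence; since the first coordinate starts at $x$ and ends at $y$, the sequence must cross sides, and at the first crossing step the relevant generator is some $((x, s_0),(y, t_0)) \in Y$ (up to symmetry) scaled by an element $r \in S$. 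As $\pi$ is constant along a $Y$-sequence one obtains $c = (a s_0) r$ with $a s_0 = b t_0 \in aS \cap bS$. Thus the finitely many ``crossing values'' $a s_0$ arising from $Y$ generate $aS \cap bS$ as a right ideal.

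For $(2) \Rightarrow (1)$: given a finitely generated right ideal $I = a_1S \cup \cdots \cup a_nS$ and $\pi$ as above, I would propose an explicit finite generating set $Y$ for $\ker \pi$, namely: for each $i$, the lifts $((x_i, u), (x_i, v))$ of a finite generating set of $\mathbf{r}(a_i)$ (finite by (ii)); and for each pair $i \neq j$, the pairs $((x_i, u_\ell),(x_j, v_\ell))$, where $\{d_\ell\}$ is a finite generating set of $a_i S \cap a_j S$ (finite by (i)) and $d_\ell = a_i u_\ell = a_j v_\ell$. To see $\langle Y \rangle = \ker \pi$, I would take $((x_i, s),(x_j, t)) \in \ker \pi$, so $a_i s = a_j t =: c$. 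If $i = j$ then $(s,t) \in \mathbf{r}(a_i)$ and an $\mathbf{r}(a_i)$-sequence lifts directly to a $Y$-sequence. If $i \neq j$ then $c \in a_iS \cap a_jS$, so $c = d_\ell w$ for some $\ell, w$; I would then concatenate a $Y$-sequence from $(x_i, s)$ to $(x_i, u_\ell w)$ coming from $(s, u_\ell w) \in \mathbf{r}(a_i)$, the single crossing step from $(x_i, u_\ell) w$ to $(x_j, v_\ell) w$ supplied by the cross generator, and a $Y$-sequence from $(x_j, v_\ell w)$ to $(x_j, t)$ coming from $(v_\ell w, t) \in \mathbf{r}(a_j)$.

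The main obstacle I anticipate is verifying that these finitely many relations genuinely generate the whole kernel---that is, the bridging $Y$-sequence construction in $(2)\Rightarrow(1)$ and, dually, the crossing analysis in $(1)\Rightarrow(2)$. Both hinge on the same two points: that $\pi$ is constant along any $Y$-sequence (so that membership of a pair in $\ker\pi$ really is controlled by the crossing data), and that condition (i) lets one route any ``cross'' relation $a_i s = a_j t$ through one of the finitely many fixed intersection generators $d_\ell$, at the cost of absorbing the remainder into the annihilator congruences supplied by (ii). Keeping careful track of the right multipliers $w$, and the reduction of the general $n$-generator ideal to the pairwise data, is where the routine but essential bookkeeping lies.
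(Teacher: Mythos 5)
The paper does not prove this statement: it is imported verbatim as \cite[Corollary 3.3]{G}, so there is no in-paper argument to compare against. Judged on its own terms, your proof is correct and is essentially the standard direct argument underlying the cited result. Both directions check out: in $(1)\Rightarrow(2)$ the identification of $\ker\pi$ with $\mathbf{r}(a)$ for a principal ideal, and the ``first crossing'' analysis of a $Y$-sequence in $F_S(\{x,y\})$ (using that the $X$-coordinate is preserved by the action and that $\pi$ is constant along any $Y$-sequence), correctly produce a finite set of elements $as_0=bt_0$ generating $aS\cap bS$; in $(2)\Rightarrow(1)$ your explicit generating set (lifts of generators of each $\mathbf{r}(a_i)$ plus one crossing pair per generator of each $a_iS\cap a_jS$) does generate $\ker\pi$, by exactly the three-stage sequence you describe, and the degenerate cases ($i=j$, or $a_iS\cap a_jS=\emptyset$) cause no trouble.

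The one point you pass over silently is the claim that a finitely generated act $I$ is finitely presented \emph{if and only if} the kernel of \emph{your chosen} surjection $\pi\colon F_S(X)\to I$ is finitely generated. The ``if'' direction is the definition, but the ``only if'' direction (needed in $(1)\Rightarrow(2)$) is the invariance of finite presentability under change of finite generating set. This is a standard fact valid in any variety of algebras and is certainly what \cite{G} relies on as well, but in a self-contained write-up it should be stated and either proved or cited (e.g.\ from \cite{KKM:2000}); without it, weak right coherence only tells you that $aS$ and $aS\cup bS$ admit \emph{some} finite presentation, not that the particular congruences $\mathbf{r}(a)$ and $\ker\pi$ are finitely generated. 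With that single supplement your argument is complete.
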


It will be convenient to have some terminology corresponding to the conditions of the previous theorem.
\begin{definition}(Right ideal Howson and finitely right equated monoids) A monoid with the property that the intersection of any two principal (or equivalently two finitely generated) right ideals is finitely generated is said to be {\em right ideal Howson}. A monoid $S$ with the property that each right annihilator congruence of the form $\mathbf{r}(a)$ where $a \in S$ is finitely generated is said to be {\em finitely right equated}. 
\end{definition}
We note that the term `finitely right aligned' used elsewhere in the literature coincides with the property of being right ideal Howson for \emph{monoids}. (Right ideal Howson \emph{semigroups} need not be finitely aligned, however \cite{CG21}.)

For regular monoids, each right annihilator $\mathbf{r}(a)$ is generated by $(1,ba)$, where $b$ is any inverse of $a$, and so the notions of weak right coherence and right ideal Howson coincide.

\begin{proposition}\cite[Corollary 3.3]{BGR23}
\label{prop:regweak}
Let $S$ be a regular monoid.
Then $S$ is weakly right coherent if and only if $aS \cap bS$
is finitely generated for all $a, b \in S$.
\end{proposition}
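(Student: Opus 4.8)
The plan is to deduce this from the general characterisation of weak right coherence in Theorem~\ref{thm:weakcoherent}, which states that $S$ is weakly right coherent if and only if, for all $a,b \in S$, (i) $aS \cap bS$ is finitely generated and (ii) the right annihilator congruence $\mathbf{r}(a)$ is finitely generated. Condition~(i) is exactly the statement of the proposition, so the whole task reduces to showing that, when $S$ is regular, condition~(ii) holds automatically and therefore contributes no additional requirement.

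To establish this, I would fix $a \in S$ and use regularity to choose an inverse $a'$ of $a$, so that $aa'a = a$. I claim that $\mathbf{r}(a)$ is generated by the single pair $(1, a'a)$, and hence is finitely generated. That $(1, a'a) \in \mathbf{r}(a)$ is immediate, since $a \cdot 1 = a = aa'a = a\cdot(a'a)$. For the reverse inclusion, I would take an arbitrary $(u,v) \in \mathbf{r}(a)$, so that $au = av$, and appeal to Lemma~\ref{lem:fg}(2) by exhibiting the explicit $\{(1,a'a)\}$-sequence
$$ u = 1\cdot u,\qquad (a'a)\cdot u = (a'a)\cdot v,\qquad 1\cdot v = v, $$
in which the first step applies the generating pair $(1,a'a)$ with multiplier $u$, the final step applies the reversed pair with multiplier $v$, and the central equality holds because $au = av$ forces $a'au = a'av$. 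This shows $(u,v) \in \langle (1,a'a)\rangle$, and so $\mathbf{r}(a) = \langle (1,a'a)\rangle$.

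Putting these together, for a regular monoid condition~(ii) of Theorem~\ref{thm:weakcoherent} is satisfied for every $a$, so weak right coherence is equivalent to condition~(i) alone, namely that $aS \cap bS$ is finitely generated for all $a,b \in S$. There is no serious obstacle here: the only point requiring care is the correct bookkeeping of the $Y$-sequence, and the essential idea is simply that regularity allows the factor $a$ to be cancelled through its inverse, collapsing each annihilator $\mathbf{r}(a)$ to a principal (one-generated) right congruence.
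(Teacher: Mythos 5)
Your argument is correct and is precisely the one the paper indicates: immediately before stating this proposition it observes that for a regular monoid each annihilator $\mathbf{r}(a)$ is generated by the single pair $(1,ba)$ with $b$ an inverse of $a$, so that condition (ii) of Theorem~\ref{thm:weakcoherent} is automatic and weak right coherence collapses to the right ideal Howson property. Your explicit two-step $Y$-sequence via Lemma~\ref{lem:fg}(2) is a valid verification of that generating claim, so there is nothing to add.
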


\begin{remark}
\label{rem:inverse}
Note that Proposition \ref{prop:regweak} in particular gives that every inverse monoid $S$  is both weakly right coherent, since
$aS\cap bS=aa^{-1}bb^{-1}S$, and dually weakly left coherent. More generally, if $S$ possesses an involution $\circ$ satisfying $(ab)^\circ = b^{\circ}a^{\circ}$, then $S$ is (weakly) right  coherent if and only if $S$ is (weakly) left  coherent.
\end{remark}

In a similar vein, using Theorem \ref{thm:coherent} (\cite[Corollary 3.4]{G}) we have the following:
\begin{proposition}\cite[Lemma 3.5]{G}
\label{prop:rabundweak}
Let $S$ be a right abundant monoid. Then $S$ is finitely right  equated. Consequently, $S$ is weakly right  coherent if and only if $S$ is right  ideal Howson.
\end{proposition}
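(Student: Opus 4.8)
The plan is to show directly that for each $a \in S$ the annihilator $\mathbf{r}(a)$ is generated by a single pair, and then to feed this into Theorem~\ref{thm:weakcoherent}. The first observation is that, by the very definition of $\mathcal{L}^*$, the congruence $\mathbf{r}(a)$ depends only on the $\mathcal{L}^*$-class of $a$: if $a \,\mathcal{L}^*\, b$ then $\mathbf{r}(a) = \mathbf{r}(b)$. Right abundance is precisely the hypothesis that lets me replace $a$ by a nicer representative: I would choose an idempotent $e$ in the $\mathcal{L}^*$-class of $a$, so that $\mathbf{r}(a) = \mathbf{r}(e)$. This reduces the whole problem to showing that $\mathbf{r}(e)$ is finitely generated whenever $e$ is idempotent.

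For the idempotent case I would prove the sharper statement $\mathbf{r}(e) = \langle (e,1)\rangle$. The inclusion $\langle (e,1)\rangle \subseteq \mathbf{r}(e)$ is immediate, since $e \cdot e = e = e \cdot 1$ shows $(e,1) \in \mathbf{r}(e)$ and $\mathbf{r}(e)$ is a right congruence. For the reverse inclusion I would take an arbitrary $(u,v) \in \mathbf{r}(e)$, that is, $eu = ev$, and exhibit a $\{(e,1)\}$-sequence from $u$ to $v$ using Lemma~\ref{lem:fg}(2). Right-multiplying the generating pair by $u$ gives $u \,\langle (e,1)\rangle\, eu$, and similarly $v \,\langle (e,1)\rangle\, ev$; since $eu = ev$, transitivity yields $(u,v) \in \langle (e,1)\rangle$. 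Thus $\mathbf{r}(e) = \langle (e,1)\rangle$ is finitely generated.

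Combining the two steps, $\mathbf{r}(a) = \mathbf{r}(e) = \langle (e,1)\rangle$ is finitely generated for every $a \in S$, so $S$ is finitely right equated. The final sentence of the proposition then follows formally from Theorem~\ref{thm:weakcoherent}: condition~(ii) of that result (that every $\mathbf{r}(a)$ be finitely generated) now holds automatically, so weak right coherence is equivalent to condition~(i) alone, namely that $aS \cap bS$ be finitely generated for all $a,b \in S$, which is exactly the right ideal Howson property.

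The argument is essentially routine; the only point requiring care is the reverse inclusion $\mathbf{r}(e) \subseteq \langle (e,1)\rangle$, where I must set up the $\{(e,1)\}$-sequence in the correct direction, using that a pair and its reverse both count in a $Y$-sequence. The conceptual heart of the matter, and the place where right abundance is genuinely used, is the reduction from an arbitrary $a$ to an idempotent $e$ in its $\mathcal{L}^*$-class: for a non-idempotent $a$ the pair $(a,1)$ need not lie in $\mathbf{r}(a)$, so this reduction cannot be avoided.
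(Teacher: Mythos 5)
Your proof is correct: the reduction via right abundance to an idempotent representative $e$ of the $\mathcal{L}^*$-class, the identity $\mathbf{r}(e)=\langle (e,1)\rangle$ established by the two-step $Y$-sequence $u = 1\cdot u,\ e\cdot u = e\cdot v,\ 1\cdot v = v$, and the appeal to Theorem~\ref{thm:weakcoherent} are all sound. The paper itself gives no proof (it cites \cite[Lemma 3.5]{G}), and your argument is precisely the standard one behind that citation.
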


Another useful technique is that coherence behaves well with respect to retraction. Recall that a submonoid $S$ of a monoid $M$ is a {\em retract} of $M$ if there exists a surjective monoid homomorphism $\varphi: M \rightarrow S$ such that $\varphi^2 = \varphi$.

\begin{theorem}\cite[Corollary 3.5, Corollary 4.12, Theorem 5.5]{DGM24} and \cite[Theorem 6.2]{GH}
\label{thm:retract} The class of right ideal Howson (finitely right equated, weakly right coherent,  right coherent) monoids is closed under retract.
\end{theorem}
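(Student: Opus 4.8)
The plan is to treat all four properties through a single mechanism: the retraction $\varphi\colon M\to S$, a surjective monoid morphism with $\varphi^2=\varphi$, which therefore restricts to the identity on $S$, so that $\varphi(s)=s$ for every $s\in S$ while $\varphi$ fixes $S$ pointwise. Two recurring tools will do almost all the work. First, $\varphi$ commutes with unions of images and satisfies $\varphi(cM)=\varphi(c)S$. Second, $\varphi$ \emph{transports $Y$-sequences}: a $Y$-sequence in $M$ whose generating pairs already lie in $S\times S$, when $\varphi$ is applied termwise, becomes a $Y$-sequence in $S$ (each $c_i,d_i\in S$ is fixed, while each multiplier $t_i\in M$ is sent to $\varphi(t_i)\in S$). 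Throughout I assume $M$ has the relevant property and deduce it for $S$.

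For the right ideal Howson property, given $a,b\in S$ I would first establish $aS\cap bS=\varphi(aM\cap bM)$: the inclusion $\subseteq$ holds because every element of $aS\cap bS$ lies in $aM\cap bM$ and is fixed by $\varphi$, while $\supseteq$ follows from $\varphi(aM)=aS$. Writing $aM\cap bM=\bigcup_{i=1}^{n}c_iM$ and applying $\varphi$ then yields $aS\cap bS=\bigcup_{i=1}^{n}\varphi(c_i)S$. For the finitely right equated property, fix $a\in S$; since products in $S$ and $M$ agree, $\mathbf{r}_S(a)=\mathbf{r}_M(a)\cap(S\times S)$. Taking a finite generating set $Y$ of $\mathbf{r}_M(a)$, I would check that $Y':=\{(\varphi(u),\varphi(v)):(u,v)\in Y\}\subseteq\mathbf{r}_S(a)$ by applying $\varphi$ to $au=av$, and then push the $Y$-sequence witnessing any $(u,v)\in\mathbf{r}_S(a)\subseteq\mathbf{r}_M(a)$ through $\varphi$ to obtain a $Y'$-sequence in $S$; by Lemma~\ref{lem:fg}(2) this gives $\mathbf{r}_S(a)=\langle Y'\rangle$. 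The weakly right coherent case is then immediate from Theorem~\ref{thm:weakcoherent}, as that property is exactly the conjunction of the two just established.

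The main work, and the principal obstacle, is the right coherent case, for which I would use Theorem~\ref{thm:coherent}. The first step is a key lemma: if $\rho=\langle Z\rangle$ is a finitely generated right congruence on $S$ (with $Z\subseteq S\times S$ finite) and $\rho^M:=\langle Z\rangle$ denotes the right congruence on $M$ generated by the same $Z$, then $\rho=\rho^M\cap(S\times S)$, and moreover $(\varphi(x),\varphi(y))\in\rho$ whenever $(x,y)\in\rho^M$. Both facts follow from the $Y$-sequence transport: a $Z$-sequence in $M$ between elements of $S$, pushed through $\varphi$, becomes a $Z$-sequence in $S$. Granting this, I would fix $a,b\in S$ and verify the two conditions of Theorem~\ref{thm:coherent} for $\rho$ by transferring them from $\rho^M$. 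For condition~(ii), the identity $\mathbf{r}_S(a\rho)=\mathbf{r}_M(a\rho^M)\cap(S\times S)$ together with the same $\varphi$-and-$Z$-sequence argument converts a finite generating set of $\mathbf{r}_M(a\rho^M)$ into one for $\mathbf{r}_S(a\rho)$.

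For condition~(i) I would exploit the map $\bar\varphi\colon M/\rho^M\to S/\rho$, $m\rho^M\mapsto\varphi(m)\rho$, which is a well-defined $S$-act morphism (well-definedness being precisely the second part of the key lemma) and is a retraction of the natural embedding $S/\rho\hookrightarrow M/\rho^M$ induced by $\rho=\rho^M\cap(S\times S)$. Starting from a finite generating set $(a\rho^M)M\cap(b\rho^M)M=\bigcup_{i=1}^{n}(w_i\rho^M)M$ supplied by coherence of $M$, I would show $(a\rho)S\cap(b\rho)S=\bigcup_{i=1}^{n}(\varphi(w_i)\rho)S$: membership of each $\varphi(w_i)\rho$ in $(a\rho)S\cap(b\rho)S$ follows by applying $\bar\varphi$, and conversely any $z=(as)\rho=(bt)\rho$ lifts to $(as)\rho^M\in(a\rho^M)M\cap(b\rho^M)M$, so that $(as)\rho^M=(w_im)\rho^M$ for some $i$ and $m\in M$, whence applying $\varphi$ gives $z=(\varphi(w_i)\rho)\varphi(m)$. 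This establishes condition~(i) and completes the right coherent case. I expect the bookkeeping distinguishing $\rho$ from $\rho^M$, and the well-definedness of $\bar\varphi$, to be where care is most needed; everything else reduces to the single transport principle for $Y$-sequences and unions under $\varphi$.
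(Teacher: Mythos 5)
The paper does not give its own proof of Theorem \ref{thm:retract}; it quotes the result from \cite{DGM24} and \cite{GH}. Your argument is correct and is essentially the standard transport argument used in those sources: the identities $\varphi(aM\cap bM)=aS\cap bS$ and $\rho=\rho^{M}\cap(S\times S)$, together with the observation that applying $\varphi$ termwise to a $Y$-sequence in $M$ yields a $\varphi(Y)$-sequence in $S$ (with the generating pairs fixed when $Y\subseteq S\times S$), are exactly the right tools, and your verification of the two conditions of Theorem \ref{thm:coherent} via the well-defined $S$-act retraction $\bar\varphi\colon M/\rho^{M}\to S/\rho$ goes through as described.
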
 

\section{Forbidden configurations}
\label{sec:forbid}
Other than the result concerning retracts mentioned above, there is not a particularly `neat' relationship between coherency and more general submonoids. Clearly, every non-coherent monoid will contain coherent submonoids (e.g. the trivial submonoid $\{1\}$ is finite and so, in particular, coherent). On the other hand, a coherent monoid can contain non-coherent submonoids (e.g. if $|X|\geq 3$, then it has been shown in \cite[Example 6.2]{DGHRZ20} that the monoid formed as the direct product of two copies of the free monoid over $X$, that is, $X^* \times X^*$, is not 
coherent, however, this monoid clearly embeds into a group, and by \cite{G} all groups are coherent as monoids). Nevertheless, it can be shown that certain configurations of elements are a barrier to left (or right) coherence. The development of such configurations was initiated in \cite{BGR23}.

Throughout the paper we use the following notations and definitions.

\begin{definition}(Special  annihilators)
Let $S$ be a monoid and  $a, b \in S$  and let $l_a$ (and  $r_a$) denote the left  (and respectively right) congruence on $S$ generated by the pair $(a, 1)$. Consider the left congruence  
$$\lambda_{a, b} =\mathbf{l}(b l_a) =   \{(u,v) \in S \times S: u b \, l_a \, v b\} $$ and dually the right congruence 
$$\rho_{a, b} = \mathbf{r}(b r_a) = \{(u,v) \in S \times S: b u  \, r_a \,  b v \}.$$
\end{definition} These congruences are particularly nice to work with, due to the following:

\begin{lemma}(See for example \cite[Lemma 7.1 and Lemma 7.2]{GH} and \cite[Lemma 3.1]{BGR23}.)\\
\label{lem:powers}
In the notation above, we have
\begin{eqnarray*}
l_a = \{(u,v): ua^m= va^n, \mbox{ for some } m, n \geq 0\},\,\, \lambda_{a, b} = \{(u,v): uba^m= vba^n, \mbox{ for some } m, n \geq 0\}.
\end{eqnarray*}
\end{lemma}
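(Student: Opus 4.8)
The plan is to prove the first identity directly and then read off the second as an immediate consequence. Set $T = \{(u,v) \in S \times S : ua^m = va^n \text{ for some } m, n \geq 0\}$. Since $l_a$ is by definition the smallest left congruence on $S$ containing $(a,1)$, it suffices to check that $T$ is a left congruence containing $(a,1)$, which yields $l_a \subseteq T$, and conversely that every pair of $T$ lies in $l_a$, which yields $T \subseteq l_a$.

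For $l_a \subseteq T$, I would verify the defining properties of $T$. Reflexivity (take $m=n=0$) and symmetry (swap $u$ and $v$) are immediate, and $(a,1) \in T$ on taking $m=0$, $n=1$. Compatibility with left multiplication is equally clear: left-multiplying $ua^m = va^n$ by $s \in S$ gives $(su)a^m = (sv)a^n$. The one step needing care is transitivity: given $ua^m = va^n$ and $va^p = wa^q$, I would right-multiply the first equation by $a^p$ and use $a^n a^p = a^{n+p} = a^p a^n$ to obtain $ua^{m+p} = va^n a^p = (va^p)a^n = (wa^q)a^n = wa^{q+n}$, witnessing $(u,w) \in T$.

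For the reverse inclusion $T \subseteq l_a$, I would build the relevant pairs from the generator. Since $(a,1) \in l_a$ and $l_a$ is closed under left multiplication, $(ua^k, ua^{k-1}) \in l_a$ for every $k \geq 1$; transitivity then gives $(ua^m, u) \in l_a$ for all $m \geq 0$, and likewise $(va^n, v) \in l_a$. Now if $(u,v) \in T$ with $ua^m = va^n$, chaining $u \, l_a \, ua^m = va^n \, l_a \, v$ and using symmetry and transitivity yields $(u,v) \in l_a$. This completes the proof that $l_a = T$.

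The description of $\lambda_{a,b}$ then requires no further work. By definition $\lambda_{a,b} = \mathbf{l}(b\, l_a) = \{(u,v) : ub \, l_a \, vb\}$, and applying the characterization of $l_a$ just established to the pair $(ub, vb)$ shows that $ub \, l_a \, vb$ holds precisely when $(ub)a^m = (vb)a^n$ for some $m, n \geq 0$, which is the asserted condition $uba^m = vba^n$. The only genuine content in the argument is the exponent bookkeeping in the transitivity step; everything else is a routine unwinding of what it means to be the left congruence generated by a single pair.
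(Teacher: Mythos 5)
Your proof is correct. The paper itself gives no argument for this lemma, deferring to \cite[Lemmas 7.1 and 7.2]{GH} and \cite[Lemma 3.1]{BGR23}; the proofs there are typically organised around the $Y$-sequence characterisation of a generated congruence (Lemma \ref{lem:fg}(2)): one writes an arbitrary pair of $l_a$ as a chain of elementary $\{(a,1)\}$-transitions and inducts on its length to extract the equation $ua^m=va^n$. You instead verify directly that the candidate set $T$ is a left congruence containing $(a,1)$ and separately that $T\subseteq l_a$; this is logically equivalent (it is exactly what makes the $Y$-sequence induction go through) but is more self-contained, since it avoids invoking the sequence machinery at all. The one step that genuinely requires care --- transitivity of $T$, where the two witnessing equations must be spliced using $a^na^p=a^pa^n$ --- is handled correctly, and the deduction of the description of $\lambda_{a,b}$ from that of $l_a$ is immediate from the definition $\lambda_{a,b}=\mathbf{l}(b\,l_a)$, just as you say.
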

It follows immediately from Theorem \ref{thm:coherent} that if there exist $a,b \in S$ such that  $\lambda_{a, b}$ is \emph{not} finitely generated, then we may conclude that $S$ is not  left coherent. The following observation demonstrates a certain `forbidden configuration' of elements for left coherent monoids.

\begin{lemma}
\label{lem:m=n}
Let $M$ be a monoid containing elements $a$ and $b$ such that
\begin{enumerate}[label=\textnormal{(\arabic*)}]
    \item for all $u,v \in M$, if $uba^n = vba^m$ holds for some $m,n \geq 0$ then $uba^n = vba^n$ also holds;
    \item for each $i \geq 1$ there exist $u_i,v_i \in M$ such that $u_i ba^i = v_iba^i$, but $u_i ba^{i-1} \neq v_iba^{i-1}$.
\end{enumerate}
Then $M$ is not left coherent.
\end{lemma}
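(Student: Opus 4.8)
The plan is to reduce everything to a single statement about the left congruence $\lambda_{a,b}$: namely, that it is not finitely generated. Once that is established, the observation recorded immediately after Lemma \ref{lem:powers} (a consequence of the left--right dual of Theorem \ref{thm:coherent}, applied to the finitely generated left congruence $l_a$) instantly gives that $M$ is not left coherent. By Lemma \ref{lem:powers} we may work with the concrete description $\lambda_{a,b} = \{(u,v) : uba^m = vba^n \text{ for some } m,n \geq 0\}$, and to show it is not finitely generated I would exhibit a strictly ascending chain of left congruences whose union is $\lambda_{a,b}$ and then invoke Lemma \ref{lem:fg}(1).

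The natural chain to build is the family $\rho_n := \{(u,v) \in M \times M : uba^n = vba^n\}$ for $n \geq 0$. First I would check that each $\rho_n$ is a left congruence: it is an equivalence relation since it is defined by an equation, and if $uba^n = vba^n$ then left-multiplying by any $s \in M$ gives $(su)ba^n = (sv)ba^n$, so $\rho_n$ is left-compatible. Right-multiplying the defining equality by $a$ shows $\rho_n \subseteq \rho_{n+1}$, so the $\rho_n$ form an ascending chain $\rho_0 \subseteq \rho_1 \subseteq \rho_2 \subseteq \cdots$. The strictness of the chain is then read off directly from hypothesis (2): for each $i \geq 1$ the pair $(u_i,v_i)$ lies in $\rho_i$ but not in $\rho_{i-1}$, so $\rho_{i-1} \subsetneq \rho_i$.

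The crux of the argument is identifying the union of the chain with $\lambda_{a,b}$, and this is precisely where hypothesis (1) is needed. The inclusion $\bigcup_n \rho_n \subseteq \lambda_{a,b}$ is immediate from the description in Lemma \ref{lem:powers}, taking $m=n$. For the reverse inclusion, given $(u,v) \in \lambda_{a,b}$ we have $uba^m = vba^n$ for some $m,n$, and hypothesis (1) collapses this to an equality with a common exponent, placing $(u,v)$ in some $\rho_k$. I expect the only genuinely delicate point to be the exponent bookkeeping here: hypothesis (1) is stated asymmetrically in $m$ and $n$, so to bring both sides onto a single power of $a$ I would apply it to the pair $(u,v)$ and, where required, also to the reversed pair $(v,u)$, balancing the equality $uba^m = vba^n$ down to $uba^m = vba^m$ (or $uba^n = vba^n$). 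With $\bigcup_n \rho_n = \lambda_{a,b}$ established and the chain shown to be strictly ascending, Lemma \ref{lem:fg}(1) yields that $\lambda_{a,b}$ is not finitely generated, completing the proof.
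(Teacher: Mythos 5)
Your proposal is correct and follows essentially the same route as the paper: both arguments use condition (1) to rewrite $\lambda_{a,b}$ as the union of the ascending chain of left congruences $\lambda_i = \{(u,v): uba^i = vba^i\}$, use condition (2) to show the chain is strictly ascending, and conclude via Lemma \ref{lem:fg}(1) and Theorem \ref{thm:coherent}. The "exponent bookkeeping" you flag is harmless, since condition (1) is universally quantified over $u,v,m,n$ and so applies directly to $uba^m = vba^n$ read in either order.
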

\begin{proof} With 
$l_{a}$ and $\lambda_{a,b}$ defined as above, by Lemma \ref{lem:powers} and condition (1) we have that \begin{eqnarray*}
 \lambda_{a,b} &=& \{(u,v) \in M \times M: 
uba^n= vba^m, \mbox{ for some } m, n \geq 0\},\\
&=& \{(u,v): uba^i= vba^i, \mbox{ for some }i \geq 0\},\\
&=& \bigcup_{i \geq 0} \lambda_i \mbox{ where }\lambda_i = \{(u,v): uba^i = vba^i\}.
\end{eqnarray*}
Clearly $\lambda_i=l(ba^i)$ is a left congruence and $\lambda_{i-1} \subseteq \lambda_i$ for all $i \geq 1$.  By condition (2) we see that $(u_i, v_i) \in \lambda_i \setminus \lambda_{i-1}$,  and then Lemma~\ref{lem:fg} gives that the congruence $\lambda_{a,b}$ on $M$ is not finitely generated.    
\end{proof}

Recall that for non-empty set $X$ and $R$ a relation on $X^*$, the monoid presentation $\langle X \mid  R \rangle$ defines the quotient monoid $X^*/R^\#$, where $R^\#$ denotes the two-sided congruence generated by $R$. If $X$ and $R$ are countable sets, say $X = \{x_0,x_1, x_2,\ldots\}$ and $R = \{(u_1, v_1), (u_2, v_2),\ldots\}$, it is customary to write simply $\langle x_i,\,  i \geq 0\mid u_i=v_i,\, i \geq 0 \rangle$.

\begin{example} Consider the monoid $M$ with presentation
\[M = \langle a,b,u_i,v_i,\, i\geq 0 \mid  u_iba^i=v_iba^i, i\geq 0\rangle.\]
Suppose that  $uba^n=vba^m$ for some $u,v\in M$; as all the relations preserve the number of $a$'s following the last $b$ in a word,  it follows that that $m=n$.  Then it is easy to see that $u_iba^{i-1}\neq v_iba^{i-1}$ for all $i\geq 0$ and so $M$ is not left coherent.
 Along the same lines, \[N = \langle a,b,c,d,u_i,v_i,s_i,t_i,\, i\geq 0 \mid  u_iba^i=v_iba^i, \, c^ids_i=c^idt_i,\,  i\geq 0\rangle\]
   is neither left nor right coherent.
    \end{example}
    
The approach of looking for forbidden configurations was introduced in \cite{BGR23} to prove the following  result, which transpires to have many applications.

\begin{theorem} \label{thm:forbid}\cite[Theorem 4.3]{BGR23}
\label{BGR}
Let $S$ be a monoid, and suppose that $g,h,e \in S$ are elements satisfying all the following conditions.
\begin{enumerate}[label=\textnormal{(\arabic*)}]
\setcounter{enumi}{-1}
    \item $e^2=e$, $gh=hg$, $ghg=g$, $hgh=h$.
    \item $hge=e=ehg$.
    \item For all $n>0$, $eg^neh^n = g^neh^ne$ and $eh^neg^n = h^neg^ne$.
    \item \begin{enumerate}[label=\textit{(\roman*)}]
        \item For all $m,n>0$, $g^meh^m \neq g^meh^m h^neg^n \neq  h^neg^n$.
        \item For all $m\neq n \geq 0$, $g^meh^m \neq g^meh^m g^neh^n$ and $h^meg^m \neq h^meg^m h^neg^n$.
    \end{enumerate}
    \item \begin{enumerate}[label=\textit{(\roman*)}]
        \item For all $0<k<n$, $eg^neh^n \neq eg^neh^ng^keh^k$. 
        \item For all $0<k\leq n$, $eg^neh^n \neq eg^neh^nh^keg^k$. 
    \end{enumerate}
\end{enumerate}
Then $S$ is neither right
nor left coherent.
\end{theorem}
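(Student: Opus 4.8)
The plan is to prove that $S$ is not left coherent; the failure of right coherence then follows by the mirror-image argument, since the hypotheses (0)--(4) are symmetric under interchanging $g$ and $h$ (which swaps the two families $g^neh^n$ and $h^neg^n$ appearing in (2)--(4)). By the observation following Lemma~\ref{lem:powers} (itself a consequence of Theorem~\ref{thm:coherent}), it is enough to produce elements $a,b\in S$, expressed as products in $g,h,e$, for which the left congruence $\lambda_{a,b}=\mathbf{l}(b\,l_a)$ is not finitely generated; by Lemma~\ref{lem:powers}, $\lambda_{a,b}=\{(u,v):uba^m=vba^n\text{ for some }m,n\ge 0\}$, so everything is controlled by the products $ba^i$.

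First I would extract the structural content of (0) and (1). Setting $f=gh=hg$, conditions (0) make $f$ idempotent with $g^nh^n=h^ng^n=f$ for all $n\ge 1$, and show that $f$ is a two-sided identity for the subsemigroup $\langle g,h,e\rangle$; condition (1) adds $ef=fe=e$. It follows that $x_n:=g^neh^n$, $y_n:=h^neg^n$ and $w_n:=eg^neh^n$ are all idempotent, and condition (2) says precisely that $e$ commutes with each $x_n$ and $y_n$, so that $w_n=ex_n=x_ne$. A short computation using $h^ng^n=f$ and $ef=e$ then gives the absorption identity $w_nx_n=w_n$, which is to be read against the inequalities provided by (4), namely $w_n\neq w_nx_k$ for $0<k<n$ and $w_n\neq w_ny_k$ for $0<k\le n$, and the distinctness/independence relations of (3).

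To show $\lambda_{a,b}$ is not finitely generated I would apply Lemma~\ref{lem:fg}(1): exhibit an infinite strictly ascending chain $\rho_1\subseteq\rho_2\subseteq\cdots$ of left congruences with $\bigcup_N\rho_N=\lambda_{a,b}$, where $\rho_N$ collects those defining relations of $\lambda_{a,b}$ that can be witnessed using exponents at most $N$. Membership in $\rho_{N-1}$ is then decided, via Lemma~\ref{lem:fg}(2), by $Y$-sequences of bounded complexity, and I would use (2)--(4) to show that the level-$N$ identification cannot be produced from lower levels: commuting the $e$'s past the $x_n,y_n$ using (2) and applying the absorption identity $w_nx_n=w_n$ telescopes any such hypothetical shorter derivation down to an equality of the form $w_N=w_Nx_k$ or $w_N=w_Ny_k$ (or to a coincidence among the $x_n$), each of which is forbidden by (3) or (4). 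Hence every inclusion $\rho_{N-1}\subset\rho_N$ is strict.

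The \emph{main obstacle} is exactly this strictness step. Unlike the clean situation of Lemma~\ref{lem:m=n}, the relations defining $\lambda_{a,b}$ do not collapse to a single exponent -- genuine cross-index relations $uba^m=vba^n$ with $m\neq n$ occur -- so one cannot simply take the chain $\mathbf{l}(ba^i)$; the real work is the bookkeeping that isolates, at each level, a new pair that no combination of lower-level relations can reach, and it is precisely to force this that the finely tuned inequalities (3) and (4) together with the commutation (2) are needed. Once the chain is shown to be strictly ascending, Lemma~\ref{lem:fg}(1) gives that $\lambda_{a,b}$ is not finitely generated, so $S$ is not left coherent, and the symmetric argument with $y_n$ in place of $x_n$ shows $S$ is not right coherent.
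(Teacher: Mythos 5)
This theorem is imported verbatim from \cite{BGR23}; the present paper contains no proof of it, only the statement together with the remark that the argument in \cite{BGR23} takes $a=g$, $b=e$ and shows that \emph{both} $\lambda_{a,b}$ and $\rho_{a,b}$ fail to be finitely generated. So your attempt has to be judged on its own terms, and while your preparatory algebra is correct and genuinely useful --- $f=gh$ is an idempotent two-sided identity for $\langle g,h,e\rangle$ with $g^nh^n=h^ng^n=f$; the elements $x_n=g^neh^n$, $y_n=h^neg^n$, $w_n=ex_n=x_ne$ are idempotent; $w_nx_n=w_n$ --- the proof itself is not there. You never commit to a choice of $a$ and $b$, and this is not a formality: conditions (3) and (4) are calibrated precisely to the congruences $\lambda_{g,e}=\{(u,v):ueg^m=veg^n\}$ and $\rho_{g,e}$, and without fixing the pair one cannot even say which elements become newly identified at level $N$. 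More seriously, the entire content of the theorem --- showing that each inclusion $\rho_{N-1}\subset\rho_N$ is strict, i.e.\ that the level-$N$ identification admits no $Y$-sequence through lower-level relations --- is exactly what you label ``the main obstacle'' and then leave undone. A secondary issue in the same step: the sets $\rho_N$ as you define them (``relations witnessed using exponents at most $N$'') are left-compatible but not obviously transitive, so one must pass to the left congruences they generate, after which both $\bigcup_N\rho_N=\lambda_{a,b}$ and strictness require argument via Lemma~\ref{lem:fg}(2).

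The opening reduction is also wrong. Interchanging $g$ and $h$ is a renaming of generators, not a reversal of multiplication, so it cannot convert a failure of left coherence into a failure of right coherence; for that you would need the hypotheses to hold in the opposite monoid, which is a different (and unverified) claim. Moreover the hypotheses are not even invariant under $g\leftrightarrow h$: condition (4) constrains only the elements $eg^neh^n$, and its image under the swap would be a constraint on $eh^neg^n$ that is nowhere assumed; likewise (3)(i) concerns $g^meh^m\cdot h^neg^n$ but not $h^meg^m\cdot g^neh^n$. The intended route in \cite{BGR23} is to run a separate, dual computation showing that the \emph{right} congruence $\rho_{g,e}$ is not finitely generated, using the right-handed halves of (1)--(2) and the remaining inequalities in (3)--(4); that half of the theorem is entirely missing from your proposal.
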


In \cite{BGR23} it is shown that if $S$ is any monoid satisfying the conditions of \cite[Theorem 4.3]{BGR23}, then taking $a = g$ and $b = e$ will yield that $\lambda_{a, b}$ and $\rho_{a, b}$ are not finitely generated. Any monoid satisfying the conditions of Theorem \ref{BGR} must be a quotient of a special semidirect product, as we shall now explain.

\begin{definition} (Special semidirect products) 
Let $M$ be a monoid and denote by $\mathcal{P}(M)$  the power set of $M$ viewed
as a monoid (indeed, a semilattice), with identity  $\emptyset$, with respect to the operation of union. It is clear that $M$ acts by monoid morphisms on the left of $\mathcal{P}(M)$ via $( m,X) \mapsto mX = \{mx: x \in X\}$ for all $X \in \mathcal{P}(M)$ and all $m \in M$. The semidirect product 
$\mathcal{P}(M) \rtimes M$ with respect to this action is then the monoid 
whose elements lie in $\mathcal{P}(M) \times M$,
with product given by $(X,x)(Y,y) = (X \cup xY, xy)$, and with identity element $(\emptyset, 1)$, where $1$ is the identity element of $M$. Writing $\mathcal{P}^{\rm fin}(M)$ to denote the subset of $\mathcal{P}(M)$ whose elements are finite subsets of $M$,
it is then easy to see that  $\mathcal{P}^{\rm fin}(M) \rtimes M$ is a submonoid of $\mathcal{P}(M) \rtimes M$. Throughout the paper we will write $\mathcal{S}(M) = \mathcal{P}(M) \rtimes M$ and $\mathcal{S}^{\rm fin}(M) = \mathcal{P}^{\rm fin}(M) \rtimes M$ and refer to them as  special semidirect products.
\end{definition}

We will see  that special semidirect products form a very broad class of monoids. In the case where $M$ is  a group, 
$\mathcal{S}(M)$ is inverse, and such monoids contain many important $E$-unitary inverse monoids.

\begin{remark}
Let $P(g,h,e)$ be the semigroup considered in \cite{BGR23} with presentation $P(g,h,e):=\langle g,h,e\mid  R\rangle$ where $R$ is the set of relations given by conditions (0), (1) and (2) of Theorem \ref{BGR}. As remarked in \cite{BGR23}, the monoid  $P(g,h,e)$ is isomorphic to the semidirect product $\mathcal{S}^{\rm fin}(\mathbb{Z})$ with the integers here considered as a group under addition. We now complete  the argument, following on from \cite[Proposition 4.1]{BGR23}. From the relations in conditions (0) and (1) one sees that: $P(g,h,e)$ is a monoid with identity element $gh$; and $g$ lies in the group of units of $P(g,h,e)$ with inverse $h$. Writing $g^{-1}=h$ and $g^0=gh$ each element of $P(g,h,e)$ is then equal to a product of the form $g^{k_1}eg^{k_2}e \cdots g^{k_{n-1}}eg^{k_{n}}$ where $n \geq 1$, and $k_1, \ldots, k_n \in \mathbb{Z}$. The elements of the form $g^jeg^{-j}$ where $j \in \mathbb{Z}$ are all distinct pairwise commuting (using relations from (2)) idempotents. For each finite subset $J$ of the integers, let us denote by $e_J$ the product (in any order) of the idempotents $g^jeg^{-j}$ for all $j \in J$ (where if $J = \emptyset$, we set $e_J =g^0$). Distinct subsets of $\mathbb{Z}$ yield distinct idempotents. It is readily verified that each  $\gamma\in  P(g,h,e)$ can be uniquely rewritten as $\gamma = e_J g^k$ for some finite subset $J$ of the integers and some integer $k$. The product of two such elements $e_{J_1}g^{k_1}$ and $e_{J_2}g^{k_2}$ (where $J_1, J_2$ are finite subsets of $\mathbb{Z}$ and $k_1, k_2 \in \mathbb{Z}$) can be expressed as follows:
\begin{equation}
 \label{prod}
e_{J_1}g^{k_1} e_{J_2}g^{k_2} = e_{J_1 \cup (k_1+J_2)} g^{k_1+k_2}.
 \end{equation}
To see this: note that for all integers $s,k$ we have
$$g^{k} (g^s e g^{-s}) = (g^{k+s} e g^{-s}) g^{k-k} = (g^{k+s} e g^{-(k+s)}) g^{k}.$$ Thus the $g^{k_1}$ in the middle of the left-hand side of \eqref{prod} can be pushed past each idempotent $g^seg^{-s}$ to the right of it at the expense of adding $k_1$ to the exponent $s$. The formula \eqref{prod} then follows by induction. Due to the uniqueness of expression of elements of $P(g,h,e)$ in the form $e_Jg^k$, it is then clear that there is a well-defined  map $\Psi: P(g,h,e) \rightarrow \mathcal{S}^{\rm fin}(\mathbb{Z})$ given by $\Psi(e_Jg^k) = (J,k)$ and moreover, it follows easily from equation \eqref{prod} that this is a monoid isomorphism. Notice that under this map we have $g\Psi = (\emptyset, 1), h\Psi = (\emptyset, -1)$ and $e\Psi =  (\{0\}, 0)$, respectively. It then follows from \cite[Proposition 4.2]{BGR23}, that $g =(\emptyset, 1)$, $h=(\emptyset, -1)$, $e =(\{0\},0)$ are elements of $\mathcal{S}(\mathbb{Z})$ satisfying the conditions of \cite[Theorem 4.3]{BGR23}, and hence taking $\alpha = g$ and $\beta = e$ yields that $\lambda_{\alpha, \beta}$ and $\rho_{\alpha, \beta}$ are not finitely generated.
\end{remark}

With this point of view in mind, we give the following rephrasing of Theorem \ref{BGR}:
\begin{corollary}
\label{cor:ghe}
Let $M$ be a monoid containing a subgroup $\{x^k: k \in \mathbb{Z}\}$ isomorphic to $\mathbb{Z}$. Suppose that $\rho$ is a congruence on $\mathcal{S}(M)$ satisfying the following conditions.
\begin{enumerate}[label=\textnormal{(\arabic*)}]
\item For all $m, n \in \mathbb{Z}$ with $m \neq n$, $(\{x^m\}, x^0)$ is not $\rho$-related to $(\{x^n\}, x^0)$. 
    \item For all $m,n>0$, $(\{x^m,x^{-n}\}, x^0)$ is not $\rho$ related to  $(\{x^m\}, x^0)$ or $(\{x^{-n}\}, x^0)$. 
    \item For all $m\neq n\geq 0$, $(\{x^m,x^n\}, 
    x^0)$ is not $\rho$ related to  $(\{x^m\}, x^0)$ and $(\{x^{-m},x^{-n}\}, x^0)$ is not $\rho$ related to  $(\{x^{-m}\}, x^0)$.
    \item For all $0<k<n$, $(\{x^0,x^n\}, x^0)$ is not $\rho$ related to $(\{x^k\}, x^0)$  or to $(\{x^0,x^k,x^n\}, x^0)$.
    \item For all $0<k \leq n$, $(\{x^0,x^n\}, x^0)$ is not related to $(\{x^{-k}\}, x^0)$ or to $(\{x^{-k},x^0,x^n\}, x^0)$.
\end{enumerate}
Then any monoid $S$ containing the quotient $Q=\mathcal{S}(M)/\rho$ as a subsemigroup is not right or left coherent.
\end{corollary}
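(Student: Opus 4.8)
The plan is to deduce the statement directly from Theorem~\ref{thm:forbid}. Write $\pi\colon\mathcal{S}(M)\to Q$ for the quotient map, abbreviate $\bar s:=\pi(s)$, and set $g=\overline{(\emptyset,x)}$, $h=\overline{(\emptyset,x^{-1})}$ and $e=\overline{(\{x^0\},x^0)}$. Since $Q$ is a subsemigroup of the monoid $S$, these are genuine elements of $S$ and every product of them computed in $S$ coincides with the product computed in $Q$. The first step is the routine semidirect-product bookkeeping in $\mathcal{S}(M)$, recording in particular that
\[
g^n e h^n=\overline{(\{x^n\},x^0)},\quad h^n e g^n=\overline{(\{x^{-n}\},x^0)},\quad e g^n e h^n=\overline{(\{x^0,x^n\},x^0)},
\]
together with the two- and three-factor products $g^m e h^m\,h^n e g^n=\overline{(\{x^m,x^{-n}\},x^0)}$, $g^m e h^m\,g^n e h^n=\overline{(\{x^m,x^n\},x^0)}$, $e g^n e h^n\,g^k e h^k=\overline{(\{x^0,x^k,x^n\},x^0)}$ and $e g^n e h^n\,h^k e g^k=\overline{(\{x^{-k},x^0,x^n\},x^0)}$ that appear in conditions (3) and (4) of Theorem~\ref{thm:forbid}.

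Next I would verify the defining \emph{equalities} (0), (1), (2) of Theorem~\ref{thm:forbid}. Each already holds in $\mathcal{S}(M)$: one computes $gh=hg=(\emptyset,1_M)=1_{\mathcal{S}(M)}$ (whence $ghg=g$, $hgh=h$, $hge=e=ehg$), $e^2=e$, and $e g^n e h^n=g^n e h^n e$ directly from the rule $(X,x)(Y,y)=(X\cup xY,xy)$. Applying $\pi$, these equalities pass to $Q$; as all the relevant products of $g,h,e$ remain inside the subsemigroup $Q$, they hold verbatim in $S$. The only place the ambient identity enters is through $g^0=h^0=1_S$ in condition (3)(ii), and there $1_S e=e\,1_S=e$ poses no difficulty. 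It then remains to verify the \emph{inequalities} (3), (4). Because $Q$ is a subsemigroup of $S$, two elements of $Q$ are distinct in $S$ exactly when their $\mathcal{S}(M)$-representatives are not $\rho$-related; translating each required inequality through the product formulas above converts it into precisely one of the given hypotheses. Concretely, (3)(i) becomes hypothesis (2), (3)(ii) becomes hypothesis (3), and the inequalities in (4)(i) and (4)(ii) become the second clauses of hypotheses (4) and (5) respectively. With (0)--(4) established for $g,h,e\in S$, Theorem~\ref{thm:forbid} delivers that $S$ is neither left nor right coherent.

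I expect the only real obstacle to be bookkeeping rather than conceptual. The delicate point is that $Q$ is merely a subsemigroup of $S$, so it is the \emph{local} identity $gh=\bar 1_Q$ — not $1_S$ — that governs the configuration; one must therefore confirm that every equality and inequality demanded by Theorem~\ref{thm:forbid} is genuinely witnessed inside $Q$, and that the lone occurrences of $g^0=1_S$ interact harmlessly with this local identity. I would also note that hypothesis (1), and the first clauses of hypotheses (4) and (5), are not consumed by this reduction; they record additional separations of the images that are automatic in the present situation (and that one would instead use in a direct argument showing $\lambda_{g,e}$ and $\rho_{g,e}$ to be non-finitely generated).
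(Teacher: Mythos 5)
Your proposal is correct and follows essentially the same route as the paper: it takes $g=(\emptyset,x)$, $h=(\emptyset,x^{-1})$, $e=(\{x^0\},x^0)$, notes that the equalities (0)--(2) of Theorem \ref{thm:forbid} hold in $\mathcal{S}(M)$ and hence pass to $Q$ and to any monoid $S$ containing $Q$ as a subsemigroup, and checks that hypotheses (2)--(5) deliver the inequalities (3)(i), (3)(ii), (4)(i), (4)(ii) in the quotient; your observation that hypothesis (1) and the first clauses of (4) and (5) are not consumed by this reduction is also accurate. The one nitpick is that $gh=(\emptyset,x^0)$ need not equal $1_{\mathcal{S}(M)}=(\emptyset,1_M)$ when the identity $x^0$ of the subgroup differs from $1_M$, but the relations $ghg=g$, $hgh=h$, $hge=e=ehg$ that you deduce from this hold by direct computation in any case, so nothing breaks.
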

\begin{proof} We have observed, in different notation that  $g =(\emptyset, x)$, $h=(\emptyset, x^{-1})$ and $e =(\{x^0\},x^0)$ are elements of $\mathcal{S}(M)$ satisfying the conditions of \cite[Theorem 4.3]{BGR23}.  Now  if $\rho$ satisfies conditions (1) and (2) above, then the corresponding inequalities of condition (3)($i$) of the Theorem will still hold in the quotient $Q$. Likewise, conditions (3), (4) and (5) will ensure that conditions (3)($ii$), (4)($i$) and (4)($ii$) also hold in $Q$.
\end{proof}

\begin{example}For a group $G$ and a natural number $n\geq 1$ let 
$$Q_n(G)=\{(X,x): X \subseteq G, |X| < n \mbox{ or } X = G, x \in G\},$$
with product
$$(X,x)(Y,y) = \begin{cases}
(X \cup xY, xy) &\mbox{ if }|X \cup xY| <n\\
(G, xy) &\mbox{ else}.
\end{cases} 
$$
Notice that $Q_n(G)$ is the quotient of $\mathcal{S}(G)$ by the congruence $\rho_n$ defined by $(X,x)\, \rho_n\, (Y,y)$ if and only if $x=y$ and either (i) $X=Y$ or (ii) $|X|\geq n$ and $
|Y| \geq n$. It is clear that that each such monoid is inverse; if $(X, x) \in Q_n(G)$, then $(x^{-1}X, x^{-1}) \in Q_n(G)$ is the unique generalised inverse of $(X,x)$.  Then by Remark~\ref{rem:inverse} we have that $Q_n(G)$ is weakly coherent. For $n=1$ it is easy to see that monoid $Q_1(G) =\{(\emptyset, x), (G, x): x \in G\}$ is  isomorphic to the direct product $\mathcal{P}(\{1\}) \times G$ and has the property that each right ideal is finitely generated, and so it follows from \cite[Corollary 3.3]{DGHRZ20} together with the fact that $Q_1(G)$ is inverse  that $Q_1(G)$ is coherent. On the other hand, if $G$ is infinite then for each $n \geq 2$ the monoid $Q_n(G)$ contains non-finitely generated right ideals (e.g. for any subset $X$ of $G$, the right ideal generated by the elements $\{(\{x\}, 1): x \in X\}$ has no fewer than $|X|$ generators). Moreover, if $G$ contains an element of infinite order, then by Corollary \ref{cor:ghe} any monoid containing $Q_n(G)$ as a subsemigroup for some $n\geq 3$ is neither left or right coherent. \end{example}

In \cite[Proposition 4.1 and Proposition 4.2]{BGR23}, it is observed that the maps $g: x \mapsto x+1$ for all $ x\in \mathbb{Z}$, $h: x \mapsto x-1$ for all $ x\in \mathbb{Z}$, and $e: x \mapsto x$ for all $x \neq 0$ are elements of the symmetric inverse monoid $\mathcal{I}_\mathbb{Z}$ satisfying the conditions of the theorem. Thus $\mathcal{I}_{\mathbb{Z}}$, or indeed, any monoid containing as a subsemigroup a monoid isomorphic to the submonoid generated by these three maps, such as: the full transformation monoid $\mathcal{T}_X$, partial transformation monoid $\mathcal{PT}_X$, symmetric inverse monoid $\mathcal{I}_X$, or partition monoid $\mathcal{P}_X$ on any infinite set $X$ \cite[Theorem 5.1]{BGR23} will be neither left nor right coherent. Notice that since $g,h,e$ are order preserving and injective, it also follows that the monoid of order preserving injective partial transformations on $\mathbb{Z}$ is neither left nor right coherent. Similarly, the triple $g: x \mapsto x+1$ for all $ x\in \mathbb{Z}$, $h: x \mapsto x-1$ for all $ x\in \mathbb{Z}$, and $e: x \mapsto x$ for all $x \neq 0$ and $e: 0 \mapsto 1$ is easily seen to satisfy the conditions of the theorem, hence showing that the monoid of all order preserving (full) transformations on $\mathbb{Z}$ is neither left nor right coherent.

\medskip
In separate work \cite{GH}, the free inverse monoid, free ample monoid, and free left ample monoid on at least two generators have each been shown not to be left coherent (with the free inverse monoid and free ample monoid also not right coherent). The technical details of these proofs are overall quite different to those from \cite{BGR23}. A careful analysis of the proof reveals it hinges on elements with certain particular properties; one of our aims subsequently is to extract such properties to develop a forbidden configuration.  Moreover, elements of special semidirect products  $\mathcal{S}(G)$ for a group $G$ also make an appearance. Indeed, let $F_X$ be the free group on a set $X$ with identity element $1$ and recall from \cite{S} that one can view the free inverse monoid $\FI(X)$, the free ample monoid $\FA(X)$, and the free left ample monoid $\FLA(X)$ on $X$ as monoid subsemigroups of $\mathcal{S}^{\rm fin}(F_X)$ as follows:
\begin{eqnarray*}
\FI(X)&:=& \{(A,a): A \in \mathcal{P}^{\rm fin}(F_X) \mbox{ is non-empty and prefix closed},  a \in A\},\\
\FA(X)&:=& \{(A,a) \in \FI(X): a \in X^*\} \subseteq \FI(X),\\
\FLA(X)&:=& \{(A,a) \in \FI(X): A  \subseteq X^*\} \subseteq \FA(X).
\end{eqnarray*} Notice that  elements $(A,a)$ of each of these monoids satisfy (i) the set $A$ is non-empty and prefix closed, and (ii) $a \in A$, and so these subsemigroups do not contain the identity element $(\emptyset, 1)$ of $\mathcal{S}^{\rm fin}(F_X)$ and moreover we are also immediately prevented from working with special annihilators using the elements  $(\emptyset, x)$ and $ (\{1\},1)$, as in \cite{BGR23}. The proof given in \cite{GH} instead proceeds by taking the elements $ (\{1, x\}, x)$ and $(\{1, y\}, y)$, where $x, y$ are distinct elements of $X$, and relies heavily on the fact that every element $(A,a)$ of the monoids in question has the property that $A$ is prefix-closed and contains $a$.

\medskip
The results discussed above cover the large majority of known examples of non-coherent monoids.

\section{Forbidden configurations in right {\it E}-Ehresmann monoids}
\label{sec:proper}

Right $E$-Ehresmann monoids form a broad class of monoids including several well-known classes we have mentioned above,  such as inverse monoids. 
In this section we  consider forbidden configurations for coherency in such monoids, arriving at a result with wide applicability. We give a brief introduction to Ehresmann monoids below; 
for more information, we refer the reader to \cite{Lawson:91,BGG}. 

\begin{definition}\label{defn:els} (The relations $\leq_{\widetilde{\mathcal{L}}_E}$, $\leq_{\widetilde{\mathcal{R}}_E}$, $\widetilde{\mathcal{L}}_E$ and
$\widetilde{\mathcal{R}}_E$) Let $S$ be a monoid and let $E \subseteq E(S)$. We recall that  $\leq_{\widetilde{\mathcal{L}}_E}$ denotes  the pre-order on $S$ defined by the rule that for all $a,b \in S$ we have that 
\[a\leq_{\widetilde{\mathcal{L}}_E} b\mbox{ if and only if } \{ e\in S: be=b\} \subseteq \{ e\in S: ae=a\}.\] The associated equivalence relation is denoted by $\widetilde{\mathcal{L}}_E$. 
Thus, $a\,\widetilde{\mathcal{L}}_E\, b$
if and only if $a$ and $b$ have the same right identities from $E$. The relations $\leq_{\widetilde{\mathcal{R}}_E}$ and $\widetilde{\mathcal{R}}_E$ are defined dually.\end{definition}

The relation $\widetilde{\mathcal{L}}_E$ is a generalisation of both Green's $\mathcal{L}$-relation and the relation $\mathcal{L}^*$: indeed, we have that $\mathcal{L} \subseteq \mathcal{L}^* \subseteq \widetilde{\mathcal{L}}_E$, and  if $S$ is regular we have $\mathcal{L} = \widetilde{\mathcal{L}}_{E(S)}$. In general, however, unlike the relations $\mathcal{L}$ and $\mathcal{L}^*$, the relation $\widetilde{\mathcal{L}}_E$ need not be a right congruence. In the case where $E$ is a semilattice it is however easy to see that each $\widetilde{\mathcal{L}}_E$-class contains at most one idempotent of $E$.
\begin{definition}\label{def:ehresmann} (Right $E$-Ehresmann) Let $E\subseteq E(S)$ be a semilattice.  We say that $S$ is {\em right $E$-Ehresmann} if every $\widetilde{\mathcal{L}}_E$-class of $S$ contains exactly one element of $E$ and $\tilde{\mathcal{L}}_E$ is a right congruence. In the case where $E=E(S)$ we abbreviate right $E$-Ehresmann to right Ehresmann. We  denote by $a^*$ the idempotent of $E$ in the $\widetilde{\mathcal{L}}_E$-class of $a$. (Dually,  in a left $E$-Ehresmann monoid, we denote by $a^+$ the idempotent of $E$ in the $\widetilde{\mathcal{R}}_E$-class of $a$.)
    \end{definition}
    
If $E(S)$ is a semilattice,  then since each $\widetilde{\mathcal{L}}_{E(S)}$-class contains at most one idempotent and $\mathcal{L} \subseteq \mathcal{L}^* \subseteq \widetilde{\mathcal{L}}_{E(S)}$, it immediately follows that if $S$ is inverse or right ample, then $S$ is right Ehresmann. In the case where $S$ is inverse, $a^+ = aa^{-1}$ and $a^* = a^{-1}a$.

There is another approach to right $E$-Ehresmann monoids that we should mention, which corresponds to an earlier remark made in the context of right ample monoids. In a right $E$-Ehresmann monoid  there is a unary operation determined by $a \mapsto a^*$. Right $E$-Ehresmann monoids then form a variety of unary monoids. If regarded in this way we may refer to $E$ as the {\em semilattice of projections.}  The present approach, regarding right $E$-Ehresmann monoids as plain monoids rather than unary monoids, is the most useful for potential applications.

Our first main result, Theorem \ref{thm:rightEhres}, gives an obstruction to  left coherence for certain monoid subsemigroups  of a right $E$-Ehresmann monoid $F$. Whilst the conditions of this theorem  are technical, we shall see (in this and subsequent sections) that there are many natural circumstances in which they are satisfied, and in which they simplify. This includes in particular  the case where $F$ is taken to be a free inverse monoid. Our result can be applied to some interesting submonoids of the free inverse monoid, in particular, recovering the negative results from \cite[Theorem 7.5]{GH}.

\begin{thm}\label{thm:rightEhres} Let $F$ be a right $E$-Ehresmann monoid and let $M$ be a monoid subsemigroup of $F$ such that:
\begin{enumerate}[label=\textnormal{(\arabic*)}]
\item there exists a unary operation $t \mapsto t^+ \in E$ on $F$ such that if $e=tx=e^2 \in E$, where $t,x\in M$ then $e=t^+x^*$;
\item if $tx\in E$ and  $xw=yw$ where $t,x,y,w \in M$, then $ty\in E$;
\item there exist elements $a,b\in M$ such that 
\begin{enumerate}[label=\textit{(\roman*)}]
\item $b,ba, ba^2,\ldots$ are all pairwise incomparable under the $\leq_{\widetilde{\mathcal{L}}_E}$-order of $F$,
\item for each $i\geq 0$, we have $M \cap (E_{i+1}
 \setminus E_{i}) \neq \emptyset$ where $E_i :=\{f \in E: fba^i=ba^i\}$,
 \item for each $i \geq 0$ and $x,y \in M$, if $xba^i = yba^i$ and $x^*ba^i=ba^i$, then $y^*ba^i=ba^i$.
\end{enumerate}
\end{enumerate}
Then $M$ is not left coherent.
\end{thm}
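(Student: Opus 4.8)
The plan is to verify the two hypotheses of Lemma~\ref{lem:m=n} directly, taking the elements $a,b \in M$ supplied by condition~(3). Since $M$ is a monoid subsemigroup of the right $E$-Ehresmann monoid $F$, the idempotent $x^* \in E$ of the $\widetilde{\mathcal{L}}_E$-class of $x$ is available for every $x \in M$ (computed in $F$), and this is the main structural tool. The whole proof reduces to showing that, for these $a$ and $b$, condition~(1) of Lemma~\ref{lem:m=n} holds (words with the same trailing $a$-power, up to $b$, are forced to agree on a common power) and condition~(2) holds (for each $i$ we can exhibit a genuine collapse at level $i$ that was not present at level $i-1$).

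First I would establish condition~(1) of Lemma~\ref{lem:m=n}: suppose $uba^n = vba^m$ for some $m,n \geq 0$ and, without loss of generality, $m \geq n$; the goal is to force $uba^n = vba^n$. Here the hypothesis~(3)(i) that $b, ba, ba^2, \ldots$ are pairwise incomparable under $\leq_{\widetilde{\mathcal{L}}_E}$ is the key lever. Applying the unary operation $*$ of $F$ and using that $\widetilde{\mathcal{L}}_E$ is a right congruence, I would argue that $(uba^n)^* = (vba^m)^*$; since $uba^n \leq_{\widetilde{\mathcal{L}}_E} ba^n$ and $vba^m \leq_{\widetilde{\mathcal{L}}_E} ba^m$, the incomparability of $ba^n$ and $ba^m$ for $n \neq m$ should yield $m = n$ directly (the two sides cannot have matching right identities from $E$ unless the trailing $a$-powers coincide). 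Thus equality of the two products already forces $m = n$, giving condition~(1) almost for free once the $\widetilde{\mathcal{L}}_E$-analysis is set up.

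Next I would establish condition~(2): for each $i \geq 1$ I must produce $u_i, v_i \in M$ with $u_i ba^i = v_i ba^i$ but $u_i ba^{i-1} \neq v_i ba^{i-1}$. This is where conditions~(1), (2), (3)(ii) and (3)(iii) of the theorem do their work. By hypothesis~(3)(ii), for each $i$ there is an idempotent $f \in M \cap (E_i \setminus E_{i-1})$, so $f ba^i = ba^i$ but $f ba^{i-1} \neq ba^{i-1}$. The natural candidate is to take $u_i = f$ and $v_i = 1$ (or a suitable related pair), so that $u_i ba^i = f ba^i = ba^i = 1 \cdot ba^i = v_i ba^i$ while $u_i ba^{i-1} = f ba^{i-1} \neq ba^{i-1} = v_i ba^{i-1}$. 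Verifying that such an $f$ genuinely witnesses a collapse at level $i$ but not at level $i-1$ is immediate from the definition of $E_i$; the role of conditions~(1), (2) and~(3)(iii) is to guarantee consistency, namely that the collapses accumulated across levels really do stratify the congruence $\lambda_{a,b}$ into the strictly increasing chain $\lambda_{i-1} \subsetneq \lambda_i$ demanded by Lemma~\ref{lem:m=n}, rather than collapsing prematurely.

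The main obstacle I anticipate is the interplay between the abstract unary operation $t \mapsto t^+ \in E$ posited in condition~(1) and the intrinsic operation $*$ of $F$: condition~(1) asserts that whenever a product $tx$ of elements of $M$ lands on an idempotent $e \in E$, that idempotent factors as $t^+ x^*$, and pinning down exactly how this constrains the products $u_i ba^i$ and relating it to the separation promised by $(3)$(iii) is the delicate step. In particular, condition~(2) of the theorem (that $tx \in E$ and $xw = yw$ force $ty \in E$) is what propagates an idempotent-valued product through a collapse, and I expect the technical heart of the argument to be combining this with~(3)(iii) to show that no spurious identifications between $u_i ba^{i-1}$ and $v_i ba^{i-1}$ can arise in $M$. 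Once these compatibility checks are in place, the conclusion that $M$ is not left coherent follows immediately from Lemma~\ref{lem:m=n}.
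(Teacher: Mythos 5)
There is a genuine gap, and it sits exactly where you locate the ``key lever''. To apply Lemma~\ref{lem:m=n} you must prove its condition~(1): whenever $uba^n=vba^m$ with $u,v\in M$, the equality can be realised at a common power. Your justification is that $uba^n\leq_{\widetilde{\mathcal{L}}_E}ba^n$ and $vba^m\leq_{\widetilde{\mathcal{L}}_E}ba^m$, so the incomparability of $ba^n$ and $ba^m$ forces $m=n$. This is a non sequitur: $\leq_{\widetilde{\mathcal{L}}_E}$ is a preorder, and two incomparable elements may perfectly well have a common element below both of them (in a semilattice $e$ and $f$ always lie above $ef$). Knowing that the single element $uba^n=vba^m$ sits below both $ba^n$ and $ba^m$ contradicts nothing. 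To extract $m=n$ from (3)($i$) one needs one side of the equality to be $\widetilde{\mathcal{L}}_E$-\emph{equivalent} to $ba^m$, not merely below it; in the paper this comes from the extra fact $x_1^*ba^m=ba^m$ (so $x_1ba^m\,\widetilde{\mathcal{L}}_E\,x_1^*ba^m=ba^m$), and that fact is only available for the components $x_1$ of a putative finite generating set $X$, after choosing $i$ (and then $m,n>i$) larger than all the quantities $d(x)$ for $(x,y)\in X$. So no global statement of the form of Lemma~\ref{lem:m=n}(1) is established, nor is one available from the hypotheses as far as the argument shows.

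This is not a presentational issue but a structural one: the paper does \emph{not} reduce to Lemma~\ref{lem:m=n}. It assumes a finite symmetric generating set $X$ for $\lambda_{a,b}$, takes $e\in M\cap(E_i\setminus E_{i-1})$ with a shortest $X$-sequence to $1$, and uses hypothesis (1) to write $e=t_1^+x_1^*$ and locate $t_1^+\in E_i\setminus E_{i-1}$, hypothesis (2) to conclude $t_1y_1\in E$, and (3)($iii$) to get $y_1^*ba^n=ba^n$, whence $t_1y_1=t_1^+y_1^*\in M\cap(E_i\setminus E_{i-1})$ has a strictly shorter $X$-sequence to $1$ --- a contradiction. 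Your sketch never actually invokes hypotheses (1), (2) or (3)($iii$) (they are said only to ``guarantee consistency''), which is a warning sign that the substance of the theorem has been bypassed. Your verification of condition~(2) of Lemma~\ref{lem:m=n} via $u_i=f\in M\cap(E_i\setminus E_{i-1})$, $v_i=1$ is fine, but without a correct proof of condition~(1) the chain $\lambda_0\subseteq\lambda_1\subseteq\cdots$ need not exhaust $\lambda_{a,b}$, and the non-finite-generation conclusion does not follow by this route.
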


\begin{proof}
Let $l_a$ be the left congruence on $M$ generated by $(1,a)$, where $1$ is the identity of $M$, and consider the left congruence  $\lambda_{a,b}=\mathbf{l}(bl_a)$ on $M$. By Lemma \ref{lem:powers} we know that 
\[u\, \lambda_{a,b} \, v \Leftrightarrow uba^i=vba^j\]
for some $i,j\geq 0$. It follows that
$1\, \lambda_{a,b} \, e$ for all $e \in M \cap \bigcup_{i \geq 0} E_i$. For each $x \in F$ let $d(x)$ be defined as follows: if $x^{*} \in \bigcup_{i\geq 0} E_i$, set $d(x)={\rm min}\{k: x^{*} \in E_k\}$, and $d(x)=-1$ otherwise.
Suppose for contradiction that $X\subseteq M\times M$ is a finite symmetric generating set for $\lambda_{a,b}$. Since $X$ is finite, there exists $i  \in \mathbb{N}$ such that $i>d(x)$ holds for all $(x,y)\in X$. For the remainder of the proof we fix such a natural number $i \geq 1$.

For any $e \in M \cap E_i \subseteq E$, we have observed that $e\, \lambda_{a,b}\, 1$ and there is therefore an $X$-sequence from $e$ to $1$ and by condition (3)($ii$), $M \cap (E_{i} \setminus E_{i-1}) \neq \emptyset$. Let $e \in M \cap (E_{i} \setminus E_{i-1})$ be such that there is an $X$-sequence of length $\ell$ from $e$ to $1$,
\[e=t_1x_1, t_1y_1=t_2x_2, \ldots, t_\ell y_\ell=1\]
whilst for no element $e'$ of $M \cap (E_{i} \setminus E_{i-1})$ is there an $X$-sequence of length strictly less than $\ell$ from $e'$ to $1$.

Since $e$ is an idempotent in $M$ and $e=t_1x_1$ where $t_1, x_1 \in M$, it follows from (1) that
\begin{equation}\label{eqn:cu}
e=t_1^+x_1^{*}
\end{equation}
giving $e=t_1^{+}e$ and (using the fact that idempotents of $E$ commute) $e =x_1^{*}e$. Since $e \in E_i$ it is easy to see that $t_1^{+}, x_1^{*} \in E_i$ also. Since $(x_1, y_1) \in X$, by our choice of $i$ we have $i>d(x_1)$, giving $x_1^{*} \in E_{i-1}$. But then, since $E_{i-1}$ is a semigroup and $e \not \in E_{i-1}$, it follows from \eqref{eqn:cu} that  $t_1^{+} \in E_{i} \setminus E_{i-1}$. Now, since $(x_1,y_1)\in X$ we also have
\begin{equation}\label{eqn:cxy}
x_1ba^m=y_1ba^n    
\end{equation}
for some $m,n\geq 0$ and (by right multiplying by $a$ as necessary) we may assume that both $m$ and $n$ are greater than $i$. By our assumptions on $i,m$ and $n$ we note that $x_1^{*} \in E_{i-1} \subseteq E_m$. Since $F$ is right Ehresmann we have $x \,\widetilde{\mathcal{L}}_E \,x^*$,  and  as $\widetilde{\mathcal{L}}_E$ is a right congruence we deduce
\[y_1ba^n=x_1ba^m\,\widetilde{\mathcal{L}}_E\, x_1^* ba^m=ba^m.\]
It follows that $ba^m\,\leq_{\widetilde{\mathcal{L}}_E}\, ba^n$
and hence $m=n$, by condition (3)($i$). Thus by condition (2) we have that $t_1y_1\in E$, whilst by (3)($iii$), $y_1^*ba^n=ba^n$.

But then, as $t_1, y_1 \in M$, condition (1) gives $t_1y_1=t_1^{+}y_1^{*}$, where (by our previous observations) $t_1^{+} \in E_i \setminus E_{i-1}$ and $y_1^{*} \in E_{i-1}$, and so we find that $t_1y_1 \in E_i \setminus E_{i-1}$. But, since $t_1, y_1 \in M$,  this contradicts the choice of $e$ having  $X$-sequence to $1$ of shortest length amongst all elements of $M \cap (E_{i} \setminus E_{i-1})$. Thus $\lambda_{a,b}$ is not finitely generated, and $M$ is not left coherent.
\end{proof}

We state below for future convenience the left-right dual to Theorem \ref{thm:rightEhres}.

\begin{theorem}\label{thm:leftEhres} Let $F$ be a left $E$-Ehresmann monoid and let $M$ be a monoid subsemigroup of $F$ such that:
\begin{enumerate}[label=\textnormal{(\arabic*)}]
\item there exists a unary operation $x \mapsto x^* \in E$ on $F$ such that if $e=tx=e^2 \in E$, where $t,x\in M$ then $e=t^+x^*$;
\item if $tx \in E$ and  $wt=ws$ where $t,x,s,w \in M$, then $sx \in E$;
\item there exist elements $a,b\in M$ such that 
\begin{enumerate}[label=\textit{(\roman*)}]
\item $b,ab, a^2b,\ldots$ are all pairwise incomparable under the $\leq_{\widetilde{\mathcal{R}}_E}$-order of $F$;
\item for each $i\geq 0$, $M \cap (E_{i+1}
 \setminus E_{i}) \neq \emptyset$ where $E_i :=\{f \in E:  a^ibf=a^ib\}$;
\item for each $i \geq 0$, if $a^ibs=a^ibt$ and $a^ibt^+ = a^ib$, then $a^ibs^+=a^ib$.
\end{enumerate}
\end{enumerate}
Then $M$ is not right coherent.
\end{theorem}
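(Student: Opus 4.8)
The plan is to prove this as the left--right dual of Theorem \ref{thm:rightEhres}, so the whole argument transposes. Let $r_a$ be the right congruence on $M$ generated by $(a,1)$ and consider the right congruence $\rho_{a,b} = \mathbf{r}(b r_a)$. By the left--right dual of Lemma \ref{lem:powers} we have $u \, \rho_{a,b} \, v$ if and only if $a^m b u = a^n b v$ for some $m,n \geq 0$; in particular $e \, \rho_{a,b} \, 1$ for every $e \in M \cap \bigcup_{i \geq 0} E_i$, since $a^i b e = a^i b = a^i b \cdot 1$ whenever $e \in E_i$. As $r_a$ is a finitely generated right congruence and $\rho_{a,b} = \mathbf{r}(b r_a)$, Theorem \ref{thm:coherent} tells us it suffices to show that $\rho_{a,b}$ is not finitely generated. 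Mirroring the proof of Theorem \ref{thm:rightEhres}, but now using the \emph{native} operation $t \mapsto t^+$ of the left $E$-Ehresmann monoid $F$, I would set $d(x) = \min\{k : x^+ \in E_k\}$ if $x^+ \in \bigcup_k E_k$, and $d(x) = -1$ otherwise.

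Assuming for contradiction that $\rho_{a,b}$ admits a finite symmetric generating set $X$, fix $i \geq 1$ with $i > d(x)$ for all $(x,y) \in X$. By condition (3)(ii) there is an element $e \in M \cap (E_i \setminus E_{i-1})$, and since $e \, \rho_{a,b} \, 1$ there is a right $X$-sequence $e = x_1 t_1,\ y_1 t_1 = x_2 t_2,\ \ldots,\ y_\ell t_\ell = 1$ from $e$ to $1$ (with the $t_j$ multiplying on the right, as befits a right congruence); choose $e$ so that $\ell$ is least over all elements of $M \cap (E_i \setminus E_{i-1})$. Applying condition (1) to the idempotent $e = x_1 t_1$ yields $e = x_1^+ t_1^*$, and a short semilattice calculation from $a^i b e = a^i b$ places both $x_1^+, t_1^* \in E_i$. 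As $x_1$ is the moving generator we have $d(x_1) < i$, so $x_1^+ \in E_{i-1}$, and since $E_{i-1}$ is a subsemigroup not containing $e$ we conclude $t_1^* \in E_i \setminus E_{i-1}$.

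The crux is to show that the replacement element $y_1 t_1$ again lies in $M \cap (E_i \setminus E_{i-1})$, thereby producing a strictly shorter right $X$-sequence to $1$ and the required contradiction. From $(x_1,y_1) \in X$ we get $a^m b x_1 = a^n b y_1$, and left-multiplying by a suitable power of $a$ we may assume $m,n > i$, whence $x_1^+ \in E_{i-1} \subseteq E_m$. The key structural input is that $F$ is left $E$-Ehresmann: $x_1 \, \widetilde{\mathcal{R}}_E \, x_1^+$ and $\widetilde{\mathcal{R}}_E$ is a \emph{left} congruence, so left-multiplying by $a^m b$ gives $a^n b y_1 = a^m b x_1 \, \widetilde{\mathcal{R}}_E \, a^m b x_1^+ = a^m b$; together with $a^n b y_1 \leq_{\widetilde{\mathcal{R}}_E} a^n b$ this forces $a^m b \leq_{\widetilde{\mathcal{R}}_E} a^n b$, and condition (3)(i) then gives $m = n$. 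Now $a^m b x_1 = a^m b y_1$, so condition (2) yields $y_1 t_1 \in E$, while condition (3)(iii) at level $m$ gives $a^m b y_1^+ = a^m b$, that is $y_1^+ \in E_m$; as $d(y_1) < i$ this refines to $y_1^+ \in E_{i-1}$. Finally condition (1) gives $y_1 t_1 = y_1^+ t_1^*$ with $y_1^+ \in E_{i-1}$ and $t_1^* \in E_i \setminus E_{i-1}$, which (exactly as in the original argument) forces $y_1 t_1 \in E_i \setminus E_{i-1}$, contradicting the minimality of $\ell$.

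I expect the main obstacle to be bookkeeping rather than conceptual: one must consistently transpose every product and interchange the roles of the two unary operations, so that the native $t \mapsto t^+$ of $F$ now plays the part that $t \mapsto t^*$ played in Theorem \ref{thm:rightEhres}, and vice versa. In particular the delicate point is to verify that it is precisely the left-congruence property of $\widetilde{\mathcal{R}}_E$ (not $\widetilde{\mathcal{L}}_E$) that licenses the left-multiplication step, together with the observation that each appeal to condition (3)(iii) genuinely lands $y_1^+$ in some $E_k$, so that $d(y_1) \neq -1$ and the bound $d(y_1) < i$ can be used to pull $y_1^+$ down into $E_{i-1}$.
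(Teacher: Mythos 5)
Your proposal is correct and is exactly what the paper intends: the paper gives no separate proof of Theorem \ref{thm:leftEhres}, simply declaring it the left--right dual of Theorem \ref{thm:rightEhres}, and your argument is a faithful and accurate transposition of that proof (right congruence $\rho_{a,b}=\mathbf{r}(br_a)$ in place of $\lambda_{a,b}$, the native $t\mapsto t^+$ and the left-congruence property of $\widetilde{\mathcal{R}}_E$ in place of $t\mapsto t^*$ and $\widetilde{\mathcal{L}}_E$, with all the auxiliary steps — locating $x_1^+,t_1^*$ in $E_i$, forcing $m=n$ via (3)(i), and landing $y_1^+$ in $E_{i-1}$ via (3)(iii) and symmetry of $X$ — carried over correctly).
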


\begin{remark}\label{rem:cond1} We have not said so explicitly, but if $F$ is a (two-sided) Ehresmann semigroup with respect to $E$, then a natural choice for $t^+$ in Theorem \ref{thm:rightEhres} is the idempotent in the $\widetilde{\mathcal{R}}_E$-class of $t$; likewise, a natural choice for $x^*$ in Theorem \ref{thm:leftEhres} is the idempotent in the $\widetilde{\mathcal{L}}_E$-class of $x$. In all applications of Theorem \ref{thm:rightEhres} (respectively of Theorem \ref{thm:leftEhres}) in this paper we will assume that we are taking this natural choice of $t^+$ (respectively, $x^*$). However,  we caution that, with respect to this choice, condition (1) need not hold in an arbitrary Ehresmann (or even inverse) monoid, as may be seen by considering a symmetric inverse monoid on a non-trivial set. 
\end{remark}

On the positive side, we now consider some natural circumstances in which the individual conditions of the Theorem \ref{thm:rightEhres} hold. 

\begin{lemma}\label{lem:(1)etc} Condition (1) of Theorem~\ref{thm:rightEhres} (and Theorem~\ref{thm:leftEhres})  holds for any monoid subsemigroup $M$ of  $F=Y\rtimes G$, where $Y$ is a monoid semilattice and $G$ is a group.
\end{lemma}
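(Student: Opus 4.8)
The plan is to exploit the fact that, because $G$ is a group, $F=Y\rtimes G$ is an $E$-unitary inverse monoid by Theorem~\ref{thm:pi} (alternatively, one checks directly that $(y,g)$ has inverse $(g^{-1}y,g^{-1})$ and that the idempotents form the semilattice $E=\{(y,1_G):y\in Y\}$). Since $F$ is inverse it is in particular both right and left $E$-Ehresmann, and the natural choices of unary operation flagged in Remark~\ref{rem:cond1} are $t^+=tt^{-1}$ and $x^*=x^{-1}x$; these are defined on the whole of $F$, even though condition~(1) only applies them to elements $t,x\in M$. By Theorem~\ref{thm:pi}, for $t=(y,g)$ we have $t^+=tt^{-1}=(y,1_G)$, and for $x=(z,h)$ we have $x^*=x^{-1}x=(h^{-1}z,1_G)$.

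The single substantive step is a direct computation in the semidirect product. First I would suppose $e=tx=e^2\in E$ with $t=(y,g)$ and $x=(z,h)$ in $M\subseteq F$. Using the product $(y,g)(z,h)=(y\wedge gz,\,gh)$ we have $tx=(y\wedge gz,\,gh)$, and membership $tx\in E$ forces the group coordinate $gh$ to equal $1_G$, that is $h=g^{-1}$; hence $e=(y\wedge gz,1_G)$. Then I would compute $t^+x^*=(y,1_G)(h^{-1}z,1_G)=(y\wedge h^{-1}z,1_G)=(y\wedge gz,1_G)=e$, using $h^{-1}=g$ and the fact that $1_G$ acts as the identity on $Y$. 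This establishes $e=t^+x^*$ and hence condition~(1).

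The hard part will be that there is essentially no hard part: once the explicit inverse formulae from Theorem~\ref{thm:pi} are in hand, the verification is a one-line calculation, and the same computation simultaneously settles condition~(1) of both Theorem~\ref{thm:rightEhres} and its dual Theorem~\ref{thm:leftEhres}, since both statements use the identical expression $t^+x^*$ with $t^+=tt^{-1}$ and $x^*=x^{-1}x$. I would remark in passing that the computation never uses the hypothesis $e=e^2$: the identity $e=t^+x^*$ holds for any factorisation $e=tx$ lying in $E$ with $t,x\in F$. The only point requiring a word of care is that, although $M$ is merely a monoid subsemigroup of $F$ and need not be closed under $t\mapsto t^+$ or $x\mapsto x^*$, this causes no difficulty, because condition~(1) asks only that the two projections of $F$, applied to the elements $t,x\in M$, recombine inside $F$ to recover $e$.
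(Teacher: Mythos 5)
Your proposal is correct and follows essentially the same route as the paper: both reduce to the explicit formulae $(y,g)^+=(y,1_G)$ and $(z,h)^*=(h^{-1}z,1_G)$ from Theorem~\ref{thm:pi}, observe that $tx\in E$ forces the group coordinates to be mutually inverse, and verify $t^+x^*=(y\wedge gz,1_G)=e$ by the same one-line computation in $Y\rtimes G$. Your added observations (that idempotency of $e$ is not needed, and that the subsemigroup $M$ plays no role since the computation happens in $F$) are accurate and consistent with the paper's reduction to the case $M=F$.
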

\begin{proof} It suffices to prove the statement in the case where $M=F$. If $(e,1_G)=(t,g_t)(x,g_x)$, then $e=t\wedge g_tx$ and $g_x^{-1}=g_t$, so
$(t,g_t)^+(x,g_x)^*=
(t,1_G)(g_x^{-1}x,1_G)=(t\wedge g_tx,1_G)=(e,1_G)$, as required.
\end{proof}
    
That conditions (2) and (3)($iii$) also hold for any submonoid $M$ of an inverse subsemigroup $F$ of the semidirect product $Y \rtimes G$ follows from Lemma~\ref{lem:cond2,3c}  below. To see this we first introduce some further terminology. 

\begin{definition} (Left $E$-adequate monoid)
We say that $S$ is \emph{left $E$-adequate} if $E$ is a semilattice of idempotents and each $\mathcal{R}^*$-class of $S$ contains a (necessarily, unique) idempotent of $E$; we denote the idempotent in the $\mathcal{R}^*$-class of $a$ by $a^+$. Equivalently, $S$ is left  $E$-adequate if it is left $E$-Ehresmann and $\mathcal{R}^*=\widetilde{\mathcal{R}}_E$. 
In the case where $E=E(S)$ we abbreviate  right $E$-adequate to \emph{left adequate}.
\end{definition}

Recall from Section \ref{sec:tools} that $F$ is said to be left abundant if every $\mathcal{R}^*$-class of $F$ contains an idempotent; left adequacy is therefore a strengthening of this condition.

\begin{remark}\label{rem:inverseeh} The ample identity has a strong influence on the structure of a semigroup, allowing idempotents to move to one side of a product. From the above if $S$ is left ample, then it is left adequate,  and also left $E$-adequate monoids are left $E$-Ehresmann.  However, left $E$-Ehresmann semigroups and in particular left adequate semigroups do not, in general, satisfy the ample identity. 
    \end{remark}

\begin{lemma} \label{lem:cond2,3c} Let $F$ be a right $E$-Ehresmann, left $E$-adequate monoid and $M$ a monoid subsemigroup of $F$. 
\begin{enumerate}
    \item If $E$ is a right unitary subset, then condition (2) of Theorem \ref{thm:rightEhres} holds.
    \item If $F$ is $E$-adequate, then condition (3)($iii$) of Theorem \ref{thm:rightEhres} holds.
\end{enumerate}\end{lemma}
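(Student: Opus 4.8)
The plan is to prove the two parts separately, in each case unwinding the relevant definitions and exploiting the hypotheses on $F$. For part (1), I would verify condition (2) of Theorem~\ref{thm:rightEhres} directly: I assume $t,x,y,w \in M$ with $tx \in E$ and $xw = yw$, and I must deduce $ty \in E$. The natural first step is to record what $tx \in E$ tells us. Since $F$ is right $E$-Ehresmann and left $E$-adequate, I would use that $tx \in E$ forces a relationship between the projections of $t$ and $x$; in particular, since $tx$ is idempotent and lies in $E$, one expects $x^* = (tx)$ or at least that $tx \leq_{\widetilde{\mathcal{L}}_E} x$, so that $(tx)x^* = tx$. The key leverage is that $E$ is a \emph{right unitary} subset: the equation $xw = yw$ should be converted, via left adequacy and the $\mathcal{R}^*$-structure, into a statement comparing $tx$ and $ty$, at which point right unitarity lets me conclude that $ty$ lands back inside $E$.

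\textbf{Key steps for part (1).} First I would cancel using $\mathcal{L}^*$: because $F$ is right $E$-Ehresmann with $E$ a semilattice, each $\widetilde{\mathcal{L}}_E$-class has a unique idempotent, and I would establish that $xw = yw$ together with $tx \in E$ yields $(tx)(\text{something}) = (ty)(\text{same thing})$ by right-multiplying the hypothesis $tx \in E$ appropriately and substituting $xw = yw$. Concretely, one computes $t(xw) = t(yw)$, i.e. $(tx)w = (ty)w$, and since $tx \in E$ is idempotent we get $w$-related expressions; the aim is to show $ty \,\mathcal{R}^*\, tx$ or that $ty$ has an idempotent right factor forcing it into $E$. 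Then, writing $ty = (tx) \cdot (\text{unit-like correction})$ or identifying $ty \leq f$ for an idempotent $f \in E$ with $e f = ty$ where $e = tx \in E$, right unitarity of $E$ (namely $e, ae \in E \Rightarrow a \in E$) delivers $ty \in E$. The main obstacle here is bridging the purely multiplicative hypothesis $xw = yw$ to a containment statement about idempotents: I expect to need left $E$-adequacy precisely to pass from the equation $(tx)w = (ty)w$ (an $\mathcal{R}^*$-type cancellation on the right factor $w$) to identifying the $^+$-projection of $ty$, and it is the interplay of adequacy with right unitarity that must be handled carefully.

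\textbf{Key steps for part (2).} For condition (3)($iii$), I assume $F$ is $E$-adequate (both left and right), fix $i \geq 0$ and $x, y \in M$, and suppose $xba^i = yba^i$ with $x^* b a^i = ba^i$; I must show $y^* ba^i = ba^i$. Setting $c = ba^i$, the hypothesis $x^* c = c$ says $c \leq_{\widetilde{\mathcal{L}}_E} x^*$, i.e. $c^* \leq x^*$ in the semilattice $E$, so $c^* x^* = c^*$ and $x^* c = c$. From $xc = yc$ I would take $^*$-projections: in an $E$-adequate monoid, $\mathcal{L}^* = \widetilde{\mathcal{L}}_E$ is a right congruence, so $xc \,\mathcal{L}^*\, yc$ and hence $(xc)^* = (yc)^*$. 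Using the right $E$-Ehresmann identity $(xc)^* = (x^* c)^*$ together with $x^* c = c$ gives $(xc)^* = c^*$, whence $(yc)^* = c^*$. It then remains to deduce $y^* c = c$ from $(yc)^* = c^*$, which should follow because $(yc)^* = (y^* c)^*$ and the left adequacy forces $y^* c = c$ once projections agree; the main obstacle will be confirming that the identity $(yc)^* = (y^*c)^*$ holds and that equality of $^*$-projections upgrades to the equation $y^* c = c$, for which I expect to invoke $E$-adequacy (the coincidence $\mathcal{L}^* = \widetilde{\mathcal{L}}_E$) rather than mere Ehresmann structure, explaining why this stronger hypothesis is assumed.
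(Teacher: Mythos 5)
Your proposal has the right overall shape for part (1) but does not execute the one step that makes it work, and part (2) rests on an implication that is false in general $E$-adequate monoids. For part (1), the paper's argument is short and concrete: since $F$ is left $E$-adequate, $w\,\mathcal{R}^*\,w^+$, so $xw=yw$ gives $xw^+=yw^+$, hence $(ty)w^+ = t(xw^+) = (tx)w^+ = ew^+ \in E$ (a product of two elements of the semilattice $E$); right unitarity applied with the idempotent $w^+$ then yields $ty\in E$. You correctly identify that an ``$\mathcal{R}^*$-type cancellation on the right factor $w$'' is needed, but you never perform the substitution $w\mapsto w^+$, and your stated application of unitarity --- ``identifying $ty\leq f$ for an idempotent $f\in E$ with $ef=ty$ where $e=tx$'' --- is not of the form $f,\,af\in E\Rightarrow a\in E$; the idempotent you must multiply by on the \emph{right} is $w^+$, not $tx$. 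As written, the bridge you flag as ``the main obstacle'' is exactly the missing content.

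For part (2), your derivation of $(y^*c)^*=c^*$ (where $c=ba^i$) is fine, but the final upgrade ``$(y^*c)^*=c^*$ implies $y^*c=c$'' is not a consequence of $E$-adequacy: it would follow from the right ample identity $ec=c(ec)^*$, which adequate monoids need not satisfy (Remark~\ref{rem:inverseeh}). Indeed, in the free adequate monoid $\FAd(\{a,b\})$ one can take $e=b^+$ and $c=a(b^+a)^*$; then $ec=b^+a\neq c$ while $(ec)^*=(b^+a)^*=c^*$, so equality of $^*$-projections does not upgrade to $ec=c$. (Your side remark that $x^*c=c$ gives $c^*\leq x^*$ is also a left/right slip: a left identity from $E$ controls $c^+$, not $c^*$.) The paper's proof of (3)($iii$) circumvents this by introducing $c^+$: from $xc=yc$ and left adequacy one gets $xc^+=yc^+$, so $x^*c^+\,\mathcal{L}^*\,xc^+=yc^+\,\mathcal{L}^*\,y^*c^+$; both $x^*c^+$ and $y^*c^+$ are idempotents of the semilattice $E$ lying in one $\mathcal{L}^*$-class, hence \emph{equal}, and then $c=x^*c=x^*c^+c=y^*c^+c=y^*c$. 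The essential move is to compare idempotents of $E$ inside a single $\mathcal{L}^*$-class (where uniqueness applies) and only afterwards multiply back by $c$ using $c^+c=c$; comparing the non-idempotent elements $y^*c$ and $c$ themselves, as you propose, cannot be pushed through.
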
 
\begin{proof} \begin{enumerate} \item Let $e,t,x,y,w \in F$ be such that $e=tx=e^2 \in E$ and $xw=yw$. Since $F$ is left $E$-adequate  we have $xw^+=yw^+$ giving that $ew^+ = txw^+=tyw^+$ and it follows from unitariness that $ty \in E$ also. 
\item For any $x,y,w \in F$ with $xw=yw$ and $x^*w = w$, we have $xw^+=yw^+$. 
Using the fact that $x \,\mathcal{L}^*\,  x^*$, $y \,\mathcal{L}^*\,  y^*$ and $\mathcal{L}^*$ is a right congruence gives $x^*w^+ \,\mathcal{L}^*\,  xw^+ = yw^+\, \mathcal{L}^*\, y^*w^+$. Since each $\mathcal{L}^*$-class contains a unique idempotent, we must then have $x^*w^+ = y^*w^+$. Since $a^+a=a$ for all $a\in F$ we now deduce $w=x^*w=x^*w^+w=y^*w^+w=y^*w$. 
\end{enumerate}\end{proof}

From Theorems~\ref{thm:pi} and \ref{thm:rightEhres}, and Lemmas~\ref{lem:(1)etc} and \ref{lem:cond2,3c}, we can now deduce the following.

\begin{corollary}\label{cor:pi} Let $M$ be a monoid subsemigroup of an $E$-unitary inverse monoid $F$ (equivalently, of any semidirect product $Y\rtimes G$ where $Y$ is a semilattice and $G$ a group). Then $M$   satisfies conditions (1), (2) and (3)($iii$) of Theorem \ref{thm:rightEhres}.\\
Consequently, if $M$  contains elements $a,b \in M$ and $e_i \in E(M)$ for $i \geq 1$ satisfying:
\begin{enumerate}[label=\textit{(\roman*)}]
\item $b,ba, ba^2,\ldots$ are all pairwise incomparable under the $\leq_{\mathcal{L}}$-order of $F$,
\item for each $i\geq 1$, $e_iba^{i}=ba^{i}$ and $e_iba^{i-1}\neq ba^{i-1}$,
\end{enumerate}
then $M$ is not left coherent.
    \end{corollary}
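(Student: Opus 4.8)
The plan is to verify, one at a time, the hypotheses of Theorem~\ref{thm:rightEhres} for the monoid subsemigroup $M$, and then simply invoke that theorem. The first thing to record is the structure of the overmonoid: an $E$-unitary inverse monoid $F$ is by definition inverse with $E=E(F)$ a (right) unitary subset, and by Theorem~\ref{thm:pi} such an $F$ may be taken inside some semidirect product $Y\rtimes G$, while conversely any $Y\rtimes G$ is itself an $E$-unitary inverse monoid; this is what makes the two formulations in the statement interchangeable. Being inverse, $F$ is regular, ample and adequate, so in particular it is right $E$-Ehresmann, left $E$-adequate, and (two-sided) $E$-adequate, all with respect to $E=E(F)$.

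For the first assertion I would read off conditions (1), (2) and (3)($iii$) directly from the preceding lemmas. Condition (1) is immediate from Lemma~\ref{lem:(1)etc}, taking $F=Y\rtimes G$ (or appealing to the embedding of $F$ into such). For conditions (2) and (3)($iii$) I would apply Lemma~\ref{lem:cond2,3c}: its standing hypotheses, that $F$ is right $E$-Ehresmann and left $E$-adequate, hold as just noted; part (1) of that lemma then yields condition (2) because $E$ is right unitary, and part (2) yields condition (3)($iii$) because $F$ is $E$-adequate. This settles the first sentence of the corollary with no new computation.

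For the ``consequently'' part I would show that the extra hypotheses (i) and (ii) supply exactly the two remaining conditions (3)($i$) and (3)($ii$). Condition (3)($i$) is essentially a translation: since $F$ is regular we have $\widetilde{\mathcal{L}}_E=\mathcal{L}$ and the associated preorders coincide, so incomparability of $b,ba,ba^2,\dots$ under $\leq_{\mathcal{L}}$ is literally incomparability under $\leq_{\widetilde{\mathcal{L}}_E}$. For condition (3)($ii$), fix $i\geq 0$ and take the witness $e_{i+1}$: since $e_{i+1}\in E(M)\subseteq E(F)=E$, and $e_{i+1}ba^{i+1}=ba^{i+1}$ while $e_{i+1}ba^{i}\neq ba^{i}$, we get $e_{i+1}\in E_{i+1}\setminus E_i$, whence $e_{i+1}\in M\cap(E_{i+1}\setminus E_i)\neq\emptyset$. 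All hypotheses of Theorem~\ref{thm:rightEhres} then hold, and that theorem delivers the conclusion that $M$ is not left coherent.

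The argument is an assembly of results already in hand, so I do not expect a genuine obstacle; the points that demand care are purely bookkeeping. First, one must confirm that the several variants of ``$E$-Ehresmann / adequate / unitary'' invoked by Lemmas~\ref{lem:(1)etc} and~\ref{lem:cond2,3c} really do all specialise to the inverse, $E$-unitary case with $E=E(F)$ — in particular that full adequacy, and not merely abundance, is available. Second, one must align the indices in (3)($ii$): the hypothesis furnishes idempotents $e_i$ for $i\geq 1$ lying in $E_i\setminus E_{i-1}$, and it is the shifted element $e_{i+1}$ that serves as the witness for $M\cap(E_{i+1}\setminus E_i)\neq\emptyset$ at each $i\geq 0$.
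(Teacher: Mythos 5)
Your proposal is correct and follows exactly the route the paper takes: the paper derives this corollary by citing Theorem~\ref{thm:pi}, Theorem~\ref{thm:rightEhres}, and Lemmas~\ref{lem:(1)etc} and~\ref{lem:cond2,3c}, which is precisely your assembly, including the translation $\widetilde{\mathcal{L}}_E=\mathcal{L}$ for the regular case and the index shift for condition (3)($ii$). Nothing is missing.
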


As another consequence of Theorem \ref{thm:rightEhres} and Lemma ~\ref{lem:cond2,3c}  we have the following.

\begin{corollary}
\label{cor:E-adequate}
Let $F$ be an $E$-adequate monoid where $E \subseteq E(F)$ is a right unitary subset, and let $M$ be a  submonoid of $F$ such that:
\begin{enumerate}[label=\textnormal{(\arabic*)}]
\item if $e=tx=e^2 \in E$, where $t,x\in M$ then $e=t^+x^*$;
\item there exist elements $a,b\in M$ such that 
\begin{enumerate}[label=\textit{(\roman*)}]
\item $b,ba, ba^2,\ldots$ are all pairwise incomparable under the $\leq_{\widetilde{\mathcal{L}}_E}$-order of $F$,
\item for each $i\geq 0$, we have $M \cap (E_{i+1}
 \setminus E_{i}) \neq \emptyset$ where $E_i :=\{f \in E: fba^i=ba^i\}$,
\end{enumerate}
\end{enumerate}
Then $M$ is not left coherent.
\end{corollary}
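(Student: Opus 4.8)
The plan is to realise this corollary as a direct specialisation of Theorem~\ref{thm:rightEhres}: under the stated hypotheses, every condition of that theorem is either assumed outright or supplied automatically by Lemma~\ref{lem:cond2,3c}. First I would record the structural consequences of $F$ being $E$-adequate. By definition this means $F$ is both left and right $E$-adequate; in particular $F$ is right $E$-Ehresmann and left $E$-adequate, which is exactly the ambient hypothesis needed to invoke both parts of Lemma~\ref{lem:cond2,3c}. The adequate structure also furnishes the two unary operations required in Theorem~\ref{thm:rightEhres}: I take $t^+$ to be the unique idempotent of $E$ in the $\mathcal{R}^*$-class of $t$ and $x^*$ the unique idempotent of $E$ in the $\mathcal{L}^*$-class of $x$, which coincide with the $\widetilde{\mathcal{R}}_E$- and $\widetilde{\mathcal{L}}_E$-classes since $F$ is $E$-adequate, following the natural choice flagged in Remark~\ref{rem:cond1}. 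I would also note that a submonoid of $F$ is in particular a monoid subsemigroup of $F$, so $M$ legitimately satisfies the opening hypothesis of Theorem~\ref{thm:rightEhres}.

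Next I would match the conditions one by one. Condition (1) of the corollary is literally condition (1) of Theorem~\ref{thm:rightEhres}. Since $F$ is right $E$-Ehresmann and left $E$-adequate and $E$ is a right unitary subset, Lemma~\ref{lem:cond2,3c}(1) yields condition (2) of the theorem. Conditions (2)(i) and (2)(ii) of the corollary are verbatim conditions (3)(i) and (3)(ii) of the theorem. Finally, since $F$ is $E$-adequate, Lemma~\ref{lem:cond2,3c}(2) yields condition (3)(iii). With all of (1), (2) and (3)(i)--(iii) in place, Theorem~\ref{thm:rightEhres} applies to $M$ and concludes that $M$ is not left coherent.

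The argument is essentially bookkeeping, so I do not anticipate a genuine mathematical obstacle; the only place demanding care is ensuring that the two conditions absent from the corollary's hypothesis list---namely the theorem's (2) and (3)(iii)---are precisely the two statements provided by the respective parts of Lemma~\ref{lem:cond2,3c}, and that the hypotheses of that lemma (right $E$-Ehresmann together with left $E$-adequacy for each part, right unitariness of $E$ for part (1), and full $E$-adequacy for part (2)) are all consequences of the corollary's standing assumptions. Thus the entire content of the proof is the observation that $E$-adequacy plus right unitariness of $E$ collapse the three technical conditions of the main theorem down to the two retained in the corollary.
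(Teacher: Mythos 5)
Your proposal is correct and matches the paper's (implicit) argument exactly: the paper derives this corollary precisely by noting that conditions (1), (2)(i), (2)(ii) restate conditions (1), (3)(i), (3)(ii) of Theorem~\ref{thm:rightEhres}, while $E$-adequacy plus right unitariness of $E$ supply conditions (2) and (3)(iii) via the two parts of Lemma~\ref{lem:cond2,3c}. The bookkeeping you flag---that $E$-adequate implies right $E$-Ehresmann and left $E$-adequate, so the lemma's hypotheses hold---is exactly the content of the deduction.
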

    
We will give an application of Corollary \ref{cor:E-adequate} in the next section. We conclude this section by applying Corollary \ref{cor:pi} to $E$-unitary monoids of the form $\mathcal{S}(G)$ and to Margolis-Meakin expansions $M(G,X)$ (defined below) under certain assumptions on the group $G$, recovering several results from the literature along the way.

\begin{example}\label{ex:freemonoid}Let $G$ be a group containing an element $x$ of infinite order, and let $D(x)$ be the subsemigroup of $\mathcal{S}(G)$ generated by $a = (\{1,x^2\},x^2), b=(\{x\},1)$ and $e_i=(\{x^{2i}\}, 1)$ for $i \geq 1$. The elements $b, ba, ba^2, \ldots$ are  $\leq_{\mathcal{L}}$-incomparable in $\mathcal{S}(G)$ since $(ba^i)^* = (P_i, 1)$ where $P_0 = \{x\}$ and for $i \geq 1$, $P_i=\{x^{-2i+1}\} \cup \{x^{2(k-i)}: 0 \leq k \leq i\}$ are incomparable. The elements $e_i$ satisfy $e_iba^i = ba^i$ but $e_iba^{i-1} \neq ba^{i-1}$. Thus, Corollary \ref{cor:pi} applies to show that any monoid subsemigroup of $\mathcal{S}(G)$ containing $D(x)$ is not left coherent.
\end{example}

\begin{remark}
Taking $G$ as in the previous example and $N$ a submonoid of $G$ containing $x$, we find that the monoids $\mathcal{S}(N)$ and $\mathcal{S}^{\rm fin}(N)$ are not left coherent. This in particular recovers that fact proven in \cite{BGR23} that $\mathcal{S}^{\rm fin}(\mathbb{Z})$ is not left coherent. Of course the main result of \cite{BGR23} is much stronger: there any monoid \emph{containing $\mathcal{S}^{\rm fin}(\mathbb{Z})$ as a subsemigroup} is shown to be neither left nor right coherent. Thus if $M$ is any monoid containing a subgroup isomorphic to $\mathbb{Z}$ then it follows from \cite{BGR23} that $\mathcal{S}(M)$ is neither left nor right coherent. Example \ref{ex:freemonoid} allows us to show that $\mathcal{S}(N)$ is not left coherent in further situations, such as the case where $N$ is a free monoid.
\end{remark}

Since for any monoid $M$ we have that $M$ is a retract of $\mathcal{S}(M)$, it follows from \cite[Theorem 6.2]{GH} (see Theorem \ref{thm:retract} above) that if $M$ is not right (respectively left) coherent, then neither is $\mathcal{S}(M)$. On the other hand, we have seen that coherence of $M$ is not sufficient to guarantee coherence of $\mathcal{S}(M)$: if $M$ is a free group or a free monoid, then $M$ is coherent but $\mathcal{S}(M)$ is not \emph{left} coherent. It follows from the results of \cite{BGR23} that $\mathcal{S}(F_X)$ is not right coherent either. We show, by a slightly different method, using the left-right dual of Lemma \ref{lem:m=n}, that $\mathcal{S}(X^*)$ is not right coherent.

\begin{proposition}
\label{prop:triangle}
Let $M$ be a monoid containing an element $x$ such that the elements $1, x, x^2, \ldots$ form a strictly descending chain in the $\mathcal{R}$-order on $M$.
Then $\mathcal{S}(M)$ is not right coherent. In particular $\mathcal{S}(X^*)$ is not right coherent.
\end{proposition}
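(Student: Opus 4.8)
The plan is to apply the left-right dual of Lemma~\ref{lem:m=n} to the monoid $\mathcal{S}(M)$: it suffices to produce $a,b\in\mathcal{S}(M)$ for which the right congruence $\rho_{a,b}=\mathbf{r}(br_a)$ is not finitely generated. By the dual of Lemma~\ref{lem:powers} this amounts to finding $a,b$ such that (i) whenever $a^nbu=a^mbv$ for some $m,n\ge 0$ one already has $a^nbu=a^nbv$, and (ii) for every $i\ge 1$ there are $u_i,v_i$ with $a^ibu_i=a^ibv_i$ but $a^{i-1}bu_i\ne a^{i-1}bv_i$. Writing $\rho_i=\mathbf{r}(a^ib)$, these are clearly nested ($\rho_{i-1}\subseteq\rho_i$), condition (i) gives $\rho_{a,b}=\bigcup_{i\ge 0}\rho_i$, and condition (ii) makes this an infinite strictly ascending chain, so Lemma~\ref{lem:fg} yields the claim.

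The elements I would use are $b=(\{1\},1)$ and $a=(\{x^{q_j}:j\ge 1\},x^2)$, where $(q_j)_{j\ge1}$ is a rapidly increasing sequence of \emph{odd} positive integers with $q_j\ge 2j+1$ and with gaps exceeding $2j$, for instance $q_j=4^j+1$. A direct computation of powers in $\mathcal{S}(M)$ gives $a^ib=(G_i,x^{2i})$ with $G_i=\{x^{2k+q_j}:0\le k<i,\ j\ge 1\}\cup\{x^{2i}\}$, so that $a^ibu=(G_i\cup x^{2i}U,\,x^{2i}\mu)$ for $u=(U,\mu)$. The crucial feature of these choices of the even shift $x^2$ and the odd exponents $q_j$ is a parity device: every exponent occurring in the ``history'' part $\{x^{2k+q_j}\}$ of $G_i$ is odd, whereas the distinguished marker $x^{2i}$ (the second coordinate of $a^ib$) has even exponent. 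Note that only powers of $x$ are used, so the construction makes sense in $\mathcal{S}(M)$ for any $M$ as in the hypothesis.

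The step I expect to be the main obstacle is condition (i). For products of this shape it amounts to recovering the exponent $i$ from $a^ibu$ \emph{uniformly in} $u$, and the difficulty is that the set $x^{2i}U$ contributed by an arbitrary $u$ could a priori fill in almost any pattern, defeating a naive marker. This is where the parity device is used. To prove (i) it suffices to show that $a^mbu=a^nbv$ forces $m=n$, the conclusion of (i) then being immediate. Assuming $m>n$ (the reverse case is symmetric), I track the marker $x^{2n}\in G_n$, which lies in the first coordinate of the right-hand side. Equating first coordinates gives $x^{2n}\in G_m\cup x^{2m}U$; but $x^{2n}$ has even exponent, so it cannot lie in the odd-exponent history of $G_m$; it is not the even marker $x^{2m}$ since $n\ne m$; and it cannot lie in $x^{2m}U$, since $x^{2n}\in x^{2m}M$ would give $x^{2n}\le_{\mathcal{R}}x^{2m}$, that is $2n\ge 2m$, contradicting $m>n$ by the strictly descending chain. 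Hence $m=n$. The only properties of $M$ invoked are that distinct powers of $x$ are distinct and that $x^a\le_{\mathcal{R}}x^b$ if and only if $a\ge b$, both consequences of the chain hypothesis.

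For condition (ii) I would take $u_i=(\emptyset,1)$ (the identity) and $v_i=(\{x^{q_i-2i}\},1)$, which lies in $\mathcal{S}(M)$ as $q_i\ge 2i+1$. Then $a^ibv_i=(G_i\cup\{x^{q_i}\},x^{2i})=a^ibu_i$, because $x^{2i}x^{q_i-2i}=x^{q_i}=x^{2\cdot 0+q_i}\in G_i$, so the pair is identified at level $i$. At level $i-1$ the corresponding adjoined element is $x^{2(i-1)}x^{q_i-2i}=x^{q_i-2}$; the gap condition on the $q_j$ guarantees $q_i-2\notin\{2k+q_j:0\le k<i-1\}$, while parity separates the odd exponent $q_i-2$ from the even marker $x^{2(i-1)}$, so $x^{q_i-2}\notin G_{i-1}$ and hence $a^{i-1}bv_i\ne a^{i-1}bu_i$. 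This gives $\rho_{i-1}\subsetneq\rho_i$ for all $i\ge1$, and the proof concludes as in the first paragraph. Finally, for the last assertion one applies the result with $M=X^{*}$ and $x$ any letter, the chain $1>_{\mathcal{R}}x>_{\mathcal{R}}x^2>\cdots$ being visible from $x^iX^{*}$ being the set of words beginning with $x^i$.
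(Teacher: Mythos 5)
Your proposal is correct and follows essentially the same route as the paper: both apply the left--right dual of Lemma~\ref{lem:m=n} to $\mathcal{S}(M)$ with $a$ of the form (sparse set of odd powers of $x$, $x^2$) and a trivial $b$, using the parity separation between the even ``marker'' $x^{2n}$ and the odd ``history'' exponents together with the strictly descending $\mathcal{R}$-chain to force $m=n$, and a sufficiently sparse odd sequence (your $q_j=4^j+1$ in place of the paper's odd triangular numbers) to witness condition~(2). The only differences are cosmetic choices of $b$ and of the sparse sequence.
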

\begin{proof}
Let $T$ be the subset of $M$ containing all elements of the form $x^t$ where $t$ is an odd triangular number.  We show that the pair $a = (T,x^2)$ and $b = (\{1\}, x)$ satisfies the conditions of the left-right dual of Lemma \ref{lem:m=n}, and hence $\mathcal{S}(M)$ is not right coherent. For each $m\geq 0$ let us write $T_m = \bigcup_{0\leq i<m} x^{2i}T$, where $\emptyset = T_0 \subsetneq T_1=T \subsetneq T_2 \subsetneq \cdots$ are sets of odd powers of $x$,  with $a^mb = (T_m \cup \{x^{2m}\}, x^{2m+1})$. It is then clear that $(U,u), (V,v) \in \mathcal{S}(M)$ satisfy $a^nb(U,u) = a^mb(V,v)$ for some $m,n \geq 0$ if and only if $x^{2n+1}u=x^{2m+1}v$ and 
\begin{equation}\label{eq:tri}T_n \cup \{x^{2n}\} \cup x^{2n+1}U= T_m \cup \{x^{2m}\} \cup x^{2m+1}V.\end{equation}
If $(U,u)$ and $(V,v)$ satisfy this condition, then it follows from \eqref{eq:tri} that $m=n$ (if $n<m$, then by the assumption on $x$ and the fact that $T_m$ contains only odd powers of $x$ we see that $x^{2n}$ is contained in the left-hand set, but not the  right-hand set, and dually if $m<n$), and so the first condition of Lemma \ref{lem:m=n} holds.

For the second condition, as  the set of distances between the elements of $T$ is unbounded we have that the sets $T_0 \subsetneq T_1 \subsetneq \cdots$ of odd powers of $x$  are such that for each fixed $i\geq 1$ there exist (infinitely many) odd triangular numbers $t$ such that $x^{t-2}$ is not contained in $T_{i-1}$.
In particular, writing $t_i$ to denote the $i$th odd triangular number, it is straightforward to check that the $t_{2i+1}$ for $i \geq 1$ satisfy $x^{t_{2i+1}} \in T_i$, $t_{2i+1}\geq 2i+1 +t_{2i}$ and $x^{t_{2i+1}-2} \not\in T_{i-1}$. Then, for $u_i =(\{x^{t_{2i+1}-2i-1}\},1)$ and $v_i= (\emptyset,1)$ we see (using \eqref{eq:tri}) that $a^ibu_i = a^ibv_i$.
However, since $t_{2i+1}-2$ is odd (so in particular not equal to $2i-2$), and  $x^{t_{2i+1}-2}$ is not contained in $T_{i-1}$, we have that
$$T_{i-1} \cup \{x^{2(i-1)}\} \cup x^{2(i-1)+1}\{x^{t_{2i+1}-2i-1}\} = T_{i-1} \cup \{x^{2i-2}\} \cup \{x^{t_{2i+1}-2}\} \neq  T_{i-1} \cup \{x^{2i-2}\},$$
from which it follows (by using \eqref{eq:tri} once more) that $a^{i-1}bu_i \neq a^{i-1}bv_i$.
Thus for all $i \geq 1$ the elements $u_i= (\{x^{t_{2i+1}-2i-1}\},1)$,  $v_i = (\emptyset,1)$ satisfy condition (2) of Lemma \ref{lem:m=n}.
\end{proof}

There are many interesting monoid subsemigroups of semigroups $\mathcal{S}(G)$ where $G$ is a group to which we cannot apply Example \ref{ex:freemonoid}. Our next example is designed to be applied in some such situations, and recovers some known results. 

\begin{example}\label{ex:fi}
Let $G$ be a group with $g,h \in G$ where $g$ has infinite order and $h \not\in \langle g \rangle$, and let $D(g,h)$ be the subsemigroup of $\mathcal{S}(G)$ generated by $a = (\{1,g\},g), b=(\{1,h\},h)$ and $e_i=(\{1,h,hg,\ldots, hg^i\}, 1)$ for $i \geq 1$. The elements $b, ba, ba^2, \ldots$ are  $\leq_{\mathcal{L}}$-incomparable in $\mathcal{S}(G)$ since $(ba^i)^* = (P_i, 1)$ where $P_i = \{g^{-i}h^{-1}\} \cup \{g^k: -i \leq k \leq 0\}$ are incomparable. The elements $e_i$ satisfy $e_iba^i = ba^i$ but $e_iba^{i-1} \neq ba^{i-1}$. Thus, Corollary \ref{cor:pi} applies to show that any monoid subsemigroup of $\mathcal{S}(G)$ containing $D(g,h)$ is not left coherent. 
\end{example}

\begin{remark}\label{rem:fi} Recalling that $\FI(X)$, $\FA(X)$ and $\FLA(X)$ can each be viewed as a subsemigroup of $\mathcal{S}(F_X)$,  taking $g$ and $h$ to be distinct free generators of $F_X$ in Example \ref{ex:fi} recovers the negative results of \cite[Theorem 7.5]{GH}, that is, each of these monoids is not left coherent for $|X| \geq 2$.
\end{remark}

\begin{remark}\label{ex:BR} For any group $G$,  the expansion $Sz(G):=\{(A, g): 1,g \in A , A \subseteq G, |A|<\infty\}$ is an inverse monoid subsemigroup of $\mathcal{S}(G)$, with identity element $(\{1\}, 1)$. This monoid is the Birget-Rhodes expansion of $G$ considered by Szendrei in \cite{Sz89}. In the case where $G$ contains elements $g,h$ where $g$ has infinite order and $h \not\in \langle g \rangle$, then it follows from Example \ref{ex:fi} that $Sz(G)$ is not left coherent. Since $Sz(G)$ is inverse, neither is it right coherent.
\end{remark}

Another interesting class of $E$-unitary inverse monoids arises from the Margolis-Meakin expansion of groups; we give a brief definition and refer to  \cite{MM:1989} for any unexplained terminology.

\begin{definition}
\label{def:MM}(Margolis-Meakin expansion) Let $G$ be a group with generating set $X$ and ${\rm Cay}(G, X)$ the (right) Cayley graph of $G$ with respect to $X$, that is, the directed edge-labelled graph with set of vertices $G$ and a labelled directed edge $(h,x,hx)$ from $h$ to $hx$ with label $x$ for all $h \in G$ and all $x \in X$. There is a natural left action of $G$ on the set of edges of ${\rm Cay}(G,X)$ and this lifts to give an action by automorphisms on the monoid semilattice $(Y, \cup)$ of all finite subgraphs of ${\rm Cay}(G, X)$ where the identity element of $(Y, \cup)$
is the empty subgraph. By Theorem~\ref{thm:pi} the semidirect product $Y \rtimes G$ is an inverse monoid.  The Margolis-Meakin expansion of $G$ is then the 
subsemigroup of $Y\rtimes G$ given by:
$$M(G, X):=\{(P, g): P \in C, g \in V_P\},$$
where $C\subseteq Y$ consists precisely of those finite subgraphs that are \emph{connected and contain $1$ as a vertex}. It is not hard to see that $M(G,X)$ is a monoid with identity element $(1_C,1)$, where $1_C$ denotes the graph with single vertex $1$ and no edges, and moreover that $M(G, X)$ is an inverse subsemigroup of $Y \rtimes G$. (In the case where $G=F_X$ is the free group on this generating set, $M(G,X)$ is the free inverse monoid on $X$.)
\end{definition}

\begin{example}\label{ex:MM}
Let $X$ be a generating set for $G$ and suppose that $x,y \in X$ generate a free submonoid of $G$. For each $z \in \{x,y\}^*$,  denote by $P_z$ the directed graph whose vertices and edges are precisely those encountered on the unique path $\{x,y\}$-labelled path from $1$ to $z$. Then, by taking $a = (P_x,x)$, $b=(P_y,y)$ and $e_i = (P_{yx^i}, 1)$ it is easy to see that Corollary~\ref{cor:pi} can once again be applied in a similar manner to Example \ref{ex:fi} to show that $M(G,X)$ is not left coherent (and hence by duality not right coherent either). \end{example}
\section{Free Ehresmann monoids}
\label{sec:adequate}
In view of Remark \ref{rem:fi} several natural questions now arise. First, can our results (Theorem \ref{thm:rightEhres}, its corollaries, and their duals), be applied to determine non-(left/right) coherence of the free objects in the variety of (right/left/two-sided) Ehresmann monoids? Second, if any of these monoids are not left (respectively, right) coherent, do they satisfy any of the weaker conditions (e.g. weak left coherence, finitely left equated, left ideal Howson) discussed in Section \ref{sec:tools}? Coherence for free (left/right/two-sided) Ehresmann monoids will be the focus of the present section, and weak coherence will be considered in Section \ref{sec:weakly}.

We shall apply Corollary \ref{cor:E-adequate} and its left-right dual to demonstrate that the free Ehresmann monoid of rank at least $2$ is neither left nor right coherent. Moreover, we show that the free left Ehresmann monoid of rank at least $2$ is not left coherent. We note that the free objects in question, although considered here purely as monoids,  are actually free algebras in  varieties with augmented signature. Further, free (left) Ehresmann monoids coincide precisely with free (left) adequate monoids. We arrive at our results by using the structure of the  free (left) Ehresmann monoid given in \cite{K} and\cite{K2}. Specifically,  we may view the free (left)  Ehresmann monoid on a set $X$ as the set of (isomorphism types of) pruned (left-Ehresmann) $X$-trees with respect to the pruned operations: we recap the main notions here.

\begin{definition} (Pruned (left Ehresmann) trees and trunks)
An $X$-tree is a tree $T$ with directed edges labelled by elements of $X$, and with two distinguished vertices (the start
vertex and the end vertex) such that there is a (possibly empty) directed path from
the start vertex to the end vertex; this directed path is called the `trunk' of the tree (we will sometimes also refer to the element of $X^*$ labelling this path as the trunk). The graph with a single vertex marked as the start and end but with no edges is called the trivial $X$-tree, and is denoted by $1$.  A morphism between $X$-trees is a map taking edges to edges and vertices to vertices such that the orientation and label of each edge is preserved, and also the start and end vertex of the domain are mapped to the start vertex and end vertex of the co-domain. A retraction of a tree $T$ is an idempotent morphism from $T$ to itself; the image is a
retract of $T$. A tree $T$ is said to be pruned if it does not admit a (non-identity) retraction. We say that an $X$-tree $T$ is said to be left Ehresmann
if for each vertex $v$ of $T$ there is a directed path from the start vertex to $v$. 
\end{definition}

\begin{remark}
Notice that a morphism maps the trunk edges of its domain bijectively onto the trunk edges of its
image, and hence also that every retraction fixes all trunk edges.
\end{remark}

In what follows, we will write simply `tree' in place of $X$-tree. Moreover, trees will be viewed up to isomorphism: the set of all isomorphism types of $X$-trees is denoted by ${\rm UT}^1(X)$, the set of all isomorphism types of left Ehresmann $X$-trees is denoted by ${\rm LUT}^1(X)$, the set of all isomorphism types of pruned $X$-trees is denoted by $\FAd(X)$, and the set of all isomorphism types of pruned left Ehresmann $X$-trees is denoted by $\FLAd(X)$.

 \begin{remark}(Branches and pruning)
A `branch' of $T$ is a subgraph that contains no trunk edges and contains all the vertices and edges on the non-trunkward side of a fixed edge.  For each $T \in {\rm UT}^1(X)$ there is a unique up to isomorphism pruned tree $\overline{T}$ -- called the pruning of $T$-- that is a retraction of $T$. The ‘pruning’ process can be viewed as a process of pruning branches: if there is a branch of $T$ which can be ‘retracted into’ (i.e. mapped by a retraction which fixes
everything except the given branch) the rest of the tree, we can remove it, and moreover, repeating this process until no such branches exist will give $\overline{T}$.
\end{remark}
\begin{definition} (Unpruned and pruned operations)
Let $S, T \in {\rm UT}^1(X)$. The `unpruned product' of $S$ and $T$ denoted by $S \times T$ is (the isomorphism type of) the tree obtained by glueing together
$S$ and $T$ by identifying the end vertex of $S$ with the start vertex of $T$ and keeping all
other vertices and all edges distinct. For each $ T \in {\rm UT}^1(X)$,  $T^{(+)}$ is (the isomorphism
type of) the tree with the same labelled graph and start vertex as $T$, but with end vertex
equal to the start vertex of $T$, whilst $T^{(*)}$ is (the isomorphism
type of) the tree with the same labelled graph and end vertex as $T$, but with start vertex
equal to the end vertex of $T$. For pruned trees $S, T \in \FAd(X)$ there are corresponding pruned operations defined by $ST = \overline{S \times T}$, $T^+ =\overline{T^{(+)}}$, and $T^* =\overline{T^{(*)}}$.
\end{definition}

We summarise some further key points from \cite{K} and \cite{K2} concerning these definitions:
\begin{itemize}
    \item unpruned multiplication is an associative binary operation on ${\rm UT}^1(X)$ with identity element given by the tree with single vertex, making ${\rm UT}^1(X)$ a bi-unary monoid with respect to the two unary operations $T \mapsto T^{(+)}$ and $T \mapsto T^{(*)}$;
     \item pruned multiplication is an associative binary operation on $\FAd(X)$ with identity element given by the tree with single vertex, making $\FAd(X)$ a bi-unary monoid, with respect to the unary operations $T \mapsto T^{+}$ and $T \mapsto T^{*}$, and the map $T \mapsto \overline{T}$ is a surjective morphism of bi-unary monoids;
    \item $\FAd(X)$ is the free Ehresmann monoid on $X$, where each $a \in X$ is identified with the tree with a single edge from start vertex to end vertex labelled by $a$;
    \item the relevant multiplication and `plus' operations restrict to ${\rm LUT}^1(X)$ and $\FLAd(X)$, making them unary monoids, and moreover $\FLAd(X)$ is the free left Ehresmann monoid on $X$;
    \item the idempotents of $\FAd(X)$ and $\FLAd(X)$ are precisely those pruned trees whose start and end vertices are equal.
\end{itemize}
We now state the main result of this section.

\begin{theorem}\label{cor:adequate}
Let $|X| > 1$. Then the free Ehresmann monoid $\FAd(X)$ is neither left nor right coherent. The free left Ehresmann monoid $\FLAd(X)$ is not left coherent.
\end{theorem}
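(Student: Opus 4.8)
The plan is to apply Corollary~\ref{cor:E-adequate} (and its left-right dual) to the free (left) Ehresmann monoid, which is where the concrete combinatorial work lies. Since $\FAd(X)$ and $\FLAd(X)$ are both $E$-adequate with $E=E(F)$ consisting of those pruned trees whose start and end vertices coincide, I would first confirm that $E(F)$ is a right unitary subset: if $e$ and $te$ are both idempotents (start vertex = end vertex in each), then because the trunk of $te$ is the concatenation of the trunks of $t$ and $e$, and $e$ has empty trunk, the trunk of $te$ equals that of $t$; idempotency of $te$ then forces $t$ to have empty trunk, hence $t\in E$. Condition~(1) of Corollary~\ref{cor:E-adequate} (that $e=tx=e^2\in E$ implies $e=t^+x^*$) should follow from Remark~\ref{rem:cond1} together with the tree description: taking $t^+=\overline{t^{(+)}}$ and $x^*=\overline{x^{(*)}}$, when $tx$ is pruned with equal start and end vertices, the pruned product $t^+x^*$ glues the ``$+$-retract'' of $t$ to the ``$*$-retract'' of $x$ and should recover exactly $e$. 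Here I would lean on the fact that in a free (left) adequate monoid the natural idempotents $t^+,x^*$ are the canonical ones and the verification reduces to a gluing-of-trees computation.

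The heart of the argument is condition~(2): exhibiting generators $a,b$ so that (2)(i) $b,ba,ba^2,\dots$ are pairwise $\leq_{\widetilde{\mathcal{L}}_E}$-incomparable, and (2)(ii) for each $i$ there is an idempotent of $M=F$ lying in $E_{i+1}\setminus E_i$, where $E_i=\{f\in E: fba^i=ba^i\}$. With $|X|>1$ I would pick two distinct letters, say $a,b\in X$, viewed as single-edge trees. The element $ba^i$ is then (the pruning of) a path, and its $\widetilde{\mathcal{L}}_E$-class is determined by $(ba^i)^*$, the tree retracted from the end vertex. The key combinatorial claim is that these ``right-hand retracts'' $(ba^i)^*$ are pairwise incomparable in the semilattice $E(F)$: intuitively, $(ba^i)^*$ records the path read backwards from the end, which is $a^{-i}b^{-1}$-shaped and distinct from $(ba^j)^*$ for $i\neq j$, with neither dominating the other. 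For (2)(ii) I would construct, for each $i$, an explicit idempotent tree $e_i$ (a pruned tree with equal start/end vertices built to ``see'' the path $ba^i$ but not $ba^{i-1}$) satisfying $e_iba^i=ba^i$ but $e_iba^{i-1}\neq ba^{i-1}$; equivalently $e_i\in E_i\setminus E_{i-1}$ after reindexing. Since $X\subseteq F$ generates $F$, these $e_i$ automatically lie in $M=F$.

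I expect the pairwise-incomparability verification in (2)(i) to be the main obstacle, since it requires understanding precisely how the $\widetilde{\mathcal{L}}_E$-order on trees is governed by the right retracts $(ba^i)^*$ and showing no two are $\leq_{\widetilde{\mathcal{L}}_E}$-related. This is a genuinely tree-theoretic statement: I would translate $f\leq_{\widetilde{\mathcal{L}}_E} g$ into the containment of right-identity idempotents, then compute $(ba^i)^*$ explicitly as a pruned tree and argue that distinct $i$ give idempotents with incomparable ``shapes.'' Once (1), (2)(i) and (2)(ii) are in hand, Corollary~\ref{cor:E-adequate} immediately yields that $\FAd(X)$ is not left coherent and, via the left-right dual (Theorem~\ref{thm:leftEhres} through its own corollary, using the symmetric choice $a^ib$ and the right-unitariness of $E$ on the other side), that $\FAd(X)$ is not right coherent. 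For $\FLAd(X)$ the same construction works for the \emph{left}-coherence statement provided the chosen $a,b,e_i$ all lie in $\FLAd(X)$ --- that is, provided the relevant trees are left Ehresmann (every vertex reachable from the start). I would check that the path trees $ba^i$ and the constructed idempotents $e_i$ are left Ehresmann, so that Corollary~\ref{cor:E-adequate} applies verbatim inside $\FLAd(X)$, giving that $\FLAd(X)$ is not left coherent. The asymmetry (no right-coherence claim for $\FLAd(X)$) reflects that the dual construction would require trees that need not be left Ehresmann.
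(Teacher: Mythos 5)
Your proposal follows essentially the same route as the paper's proof: both apply Corollary~\ref{cor:E-adequate} (and its dual) with $F=\FAd(X)$, take $a,b$ to be distinct generators realised as single-edge trees, use the right-identity idempotents $(ba^i)^*$ to establish pairwise $\leq_{\widetilde{\mathcal{L}}_E}$-incomparability of $b,ba,ba^2,\dots$, witness $E_{i+1}\setminus E_i\neq\emptyset$ with the idempotents $(ba^{i+1})^+$, and note that all these elements lie in $\FLAd(X)$ so the left-coherence argument restricts to the free left Ehresmann monoid. The one place the paper is sharper is condition~(1): rather than a gluing-of-trees computation with $t^{(+)}$ and $x^{(*)}$, it observes that any morphism of $X$-trees maps trunk edges bijectively onto the trunk edges of its image, so $tx=e=e^2$ forces $t$ and $x$ to have empty trunk and hence to be idempotent, whence $t=t^+$, $x=x^*$ and the identity $e=t^+x^*$ is immediate.
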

\begin{proof}
Since $\FAd(X)$ is also the free object in the quasi-variety of adequate monoids it is in particular adequate. Moreover, it is easy to see that the set of idempotents of $\FAd(X)$ is unitary. Thus, our aim is to apply Corollary~\ref{cor:E-adequate} in the case where $F$ is the free Ehresmann monoid $\FAd(X)$ and $M$ is either the free Ehresmann monoid $\FAd(X)$ or the free left Ehresmann monoid $\FLAd(X)$; that the latter may be viewed as a submonoid of $\FAd(X)$ follows from \cite[Theorem 3.18]{K2}. We show that conditions (1) and (2) of Corollary \ref{cor:E-adequate} hold.

If $t,x,e$ are pruned $X$-trees such that $tx=e=e^2$ in the free Ehresmann monoid it then follows that there is a morphism $\phi$ from the unpruned tree $t \times x$ to $e$. Notice that by construction every trunk edge of $t$ is also a trunk edge of $t \times x$, and likewise, every trunk edge of $x$ is a trunk edge of $t \times x$. Since any morphism of $X$-trees maps trunk edges of its domain bijectively onto the trunk edges of its image, we now see that $t$ and $x$ have no trunk edges, in other words $t$ and $x$ are idempotent. But then $t=t^+$ and $x=x^*$, giving that condition (1) of Corollary \ref{cor:E-adequate} holds when taking $F$ to be the free Ehresmann monoid and $M$ any submonoid of $F$.

Now, suppose $X = \{a,b,\ldots\}$ and
identify the generator $a$ with the pruned tree consisting of a single directed edge labelled by $a$, with start and end vertex marked to agree with the orientation of the edge, and likewise for $b$. It is clear that for any $i \geq 0$ the pruned tree obtained from $ba^i$ is a directed path labelled by the word $ba^i$, and by construction $(ba^i)^*$ is a right identity for $ba^i$. However, it is straightforward to check that for any $i \neq j \geq 0$, $(ba^j)^*$ is not a right identity for $ba^i$, demonstrating that the elements $b, ba, ba^2, \ldots$ are incomparable in the $\leq_{\widetilde{\mathcal{L}}}$-order of $F$. Thus (2)($i$) holds for $M=F=\FAd(X)$. Similarly, for any $i \geq 0$, $(ba^{i+1})^+$ is a left identity for $ba^{i+1}$, but not a left identity for $ba^{i}$, demonstrating that condition (2)($ii$) also holds for $M=F=\FAd(X)$, and hence $\FAd(X)$ is not left coherent. By similar reasoning, one finds that the left-right dual of Corollary \ref{cor:E-adequate} may be applied to demonstrate that $\FAd(X)$ is not right coherent either.

Finally, we note that since each of the elements $ba^i$ and $(ba^{i+1})^+$ for $i \geq 0$ also lie in the free left Ehresmann monoid $\FLAd(X)$, exactly the same argument applies to show that conditions (2)($i$) and (2)($ii$) hold for $M=\FLAd(X)$ and $F=\FAd(X)$, and hence the free left Ehresmann monoid $\FLAd(X)$ is not left coherent either.
\end{proof}

The above result is a neat application of Corollary \ref{cor:E-adequate}; a very natural question at this point is whether the result can be obtained by other means. We show next that this result cannot be obtained by a straightforward application of existing results from the literature concerning subsemigroups and retracts. First, note that \cite[Theorem 4]{BGR23} (detailed above as Theorem \ref{thm:forbid}) does not apply in this situation, since the elements $g,h$ specified in the set-up of that  theorem generate a group, whilst the free Ehresmann monoid (and the free left Ehresmann monoid) does not contain any non-trivial groups. Next, we note that the free left ample monoid (and hence also the free ample monoid) of any rank is not isomorphic to a submonoid of any free  Ehresmann monoid (as we show below)
 and so in particular is not a retract of the free   Ehresmann  monoid.
Thus Theorem \ref{cor:adequate} cannot be obtained from the existing negative results for left coherency of the free (left) ample monoid \cite[Theorem 7.5]{GH} together with the fact that retraction respects coherency \cite[Theorem 6.3]{GH}.

\begin{proposition}
Let $X$ and $Y$ be arbitrary non-empty sets. The free left ample monoid $\FLA(X)$  does not embed into the free Ehresmann monoid $\FAd(Y)$. 
\end{proposition}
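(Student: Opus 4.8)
The plan is to argue by contradiction: suppose $\theta\colon \FLA(X)\hookrightarrow \FAd(Y)$ is a monoid embedding and derive an impossible configuration of pruned trees. It is worth recording first why the obstruction must be genuinely model-theoretic rather than equational. The monoid $\FLA(X)$ is a proper \emph{quotient} of the free left Ehresmann monoid $\FLAd(X)$: passing from prefix-closed $X$-trees to prefix-closed \emph{sets} of words forces any two parallel edges carrying the same label to be merged, whereas in $\FAd(Y)$ incomparable same-label branches persist. Nevertheless $\FLA(X)$ embeds into the inverse (hence adequate) monoid $\mathcal{S}^{\rm fin}(F_X)$, so every quasi-identity valid in adequate monoids already holds in $\FLA(X)$; consequently no identity or quasi-identity of the \emph{variety} of Ehresmann monoids can separate $\FLA(X)$ from $\FAd(Y)$. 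The separating property will therefore have to exploit the \emph{freeness} (rigidity of the tree normal form) of $\FAd(Y)$.

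For the semantic heart of the argument I would isolate a finite configuration in $\FLA(X)$ that is forbidden in any \emph{right ample} monoid. Fixing two distinct elements $x,y\in X$, put $e=y^{+}=(\{1,y\},1)$ and $s=ex=y^{+}x=(\{1,x,y\},x)$. A direct computation in the prefix-set model shows that $s\neq x$, that $1$ is the only idempotent right identity of both $x$ and $s$, and hence that $s\,\mathcal{L}^{*}\,x$ in $\FLA(X)$ (more generally $y^{+}x^{k}\,\mathcal{L}^{*}\,x^{k}$ for all $k\geq 1$). In any right ample monoid the relations $e^{2}=e$ and $ex\,\mathcal{L}^{*}\,x$ force $ex=x(ex)^{*}=xx^{*}=x$; since $\FAd(Y)$ \emph{is} right ample, it cannot contain this configuration. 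The difficulty is thus reduced to transporting the relation $s\,\mathcal{L}^{*}\,x$ along $\theta$: the embedding preserves the equalities and inequalities defining the configuration, but $\mathcal{L}^{*}$ is defined through right annihilators computed over the \emph{whole} monoid, and these enlarge on passing from the submonoid $\theta(\FLA(X))$ to $\FAd(Y)$.

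To bridge this gap I would work in the tree model of $\FAd(Y)$, using the two monoid homomorphisms that are visibly respected by pruning: the trunk-length map $\ell\colon\FAd(Y)\to(\mathbb{N},+)$ and the content map $c\colon\FAd(Y)\to(\mathcal{P}(Y),\cup)$. Composing with $\theta$ forces each generator to a tree of positive trunk length (else $\theta(x)^{2}=\theta(x)$ would give $x^{2}=x$) and pins down the contents of $\theta(x),\theta(y)$ and of $\theta(x^{+}),\theta(y^{+})$, for instance $c(\theta(x^{+}))\subseteq c(\theta(x))$ because $\theta(x^{+})\theta(x)=\theta(x)$. Writing $t=\theta(x)$ and letting $f=\theta(y^{+})$ be the resulting idempotent tree, the remaining task is to show that the smallest idempotent right identity $(ft)^{*}$, computed in $\FAd(Y)$, is forced to be trivial. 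Here I would feed in the whole family $\{\,y^{+}x^{k}:k\geq 1\,\}$, each member of which is left cancellable in $\FLA(X)$, and read off in $\FAd(Y)$ the equalities $\theta(y^{+}x^{k})=\theta(x^{k})\,(ft)_{k}^{*}$ supplied by right ampleness; tree rigidity together with the content and trunk-length constraints should leave the trivial tree as the only pruned idempotent able to serve simultaneously at the end vertices of all the powers $t^{k}$. This would give $ft=t$, that is $\theta(y^{+}x)=\theta(x)$, contradicting injectivity of $\theta$.

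The step I expect to be the main obstacle is precisely this last one: \emph{controlling which idempotents of $\FAd(Y)$ can occur as $*$-parts of images}, since a priori an idempotent lying \emph{outside} $\theta(\FLA(X))$ could act as $(ft)^{*}$ and block the contradiction. This is the exact point at which the freeness of $\FAd(Y)$, and not merely its being adequate, must be invoked: I would expect to need a normal-form (confluence) analysis of how an idempotent tree can prune into a power $t^{k}$ at its end vertex, together with the observation that any nontrivial such witness would survive into arbitrarily long powers and so contradict the finiteness of the fixed tree $t$. Only the configuration with distinct $x,y$ is used, matching the rank at least $2$ hypothesis relevant to the applications.
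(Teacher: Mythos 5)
Your argument has a fatal flaw at its centre: the free Ehresmann monoid $\FAd(Y)$ is \emph{not} right ample, so the step ``since $\FAd(Y)$ is right ample, $ex\,\mathcal{L}^*\,x$ forces $ex=x(ex)^*=xx^*=x$'' is unavailable. The paper is explicit on this point (Remark~\ref{rem:inverseeh} and the opening of Section~\ref{sec:embed}): free Ehresmann monoids coincide with free adequate monoids, and adequate monoids need not satisfy the ample identities --- indeed the failure of the ample identities in $\FAd(Y)$ is precisely what distinguishes it from the free ample monoid. Concretely, taking $a$ to be the single trunk edge labelled $x$ and $e$ the idempotent consisting of a single branch edge labelled $y$, one checks in the tree model that $ea$ is a pruned tree on three vertices while $a(ea)^*$ is a pruned tree on four vertices, so $ea\neq a(ea)^*$. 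Without the right ample identity, your forbidden configuration (an idempotent $e$ with $ex\neq x$ but $ex\,\mathcal{L}^*\,x$) is simply not forbidden in $\FAd(Y)$, and the intended contradiction evaporates. A secondary problem, which you acknowledge yourself, is that even within your framework the transport of $\mathcal{L}^*$ (a condition quantified over the whole ambient monoid) from $\FLA(X)$ to $\FAd(Y)$, and the control of which idempotents can arise as $*$-parts of images, are left unresolved; in fact in $\FAd(Y)$ the only left-cancellable element is $1$, since $a a^*=a$ with $a^*\neq 1$ for every $a\neq 1$, so cancellability gives no purchase either.

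For contrast, the paper's proof avoids $\mathcal{L}^*$ and ampleness entirely. It fixes a single generator $x$, sets $W=x\theta$ and $F_i=(x^i)^+\theta$, and uses the identities $x(x^i)^+=(x^{i+1})^+x$ and $(x^{i+1})^+(x^i)^+=(x^{i+1})^+$, which \emph{do} hold in $\FLA(X)$ and hence transfer along any monoid embedding. These force $WF_i=F_{i+1}W$ with the idempotent trees $F_i$ forming an increasing chain, and a contradiction is then extracted from two finiteness invariants of pruned trees: the content map (if the labels used by the $F_i$ grow without bound, the equality $WF_N=F_{N+1}W$ fails on contents) and the depth map (over a finite label set there are only finitely many pruned trees of each depth, so the depths $d(F_i)$ must be unbounded, and a sufficiently deep unprunable path in $F_{N+1}W$ cannot exist in $WF_N$). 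If you want to repair your approach you would need to replace the appeal to right ampleness with an argument of this combinatorial, normal-form type.
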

We emphasise that we are regarding  $\FLA(X)$ and $\FAd(Y)$ as monoids, not as unary monoids; the argument for unary monoids would be somewhat more straightforward than that given below.

\begin{proof}
Suppose that $\FLA(X)$ embeds into $\FAd(Y)$ via $\theta$. Fix $x \in X$; it is clear that the  image of $x$ under this embedding is not idempotent and hence has trunk containing an edge labelled by some generator of  $ Y$. For each $T \in \FAd(Y)$, let $d(T)$ be the maximal length of any undirected path beginning at the start vertex of $T$ and let $c(T) \subseteq Y$ be the set of edge labels of $T$.

Let $W = x\theta$ and for each $i \geq 0$ let $F_i = (x^i)^+\theta$.  Since $x(x^i)^+ = (x^{i+1})^+x$ and $(x^{i+1})^+(x^i)^+ = (x^{i+1})^+$ hold in $\FLA(X)$ for all $i \geq 0$ we must have $WF_i = F_{i+1}W$ and $F_{i+1}F_i = F_{i+1}$ for all $i \geq 0$. In particular, each idempotent $F_i$ is a subtree of $F_{i+1}$ and hence satisfies $d(F_i) \leq d(F_{i+1})$ and $c(F_i)\subseteq c(F_{i+1})$.

Suppose first that $\bigcup_{i\geq 0}c(F_i)$ is infinite. Then there must be an $N\geq 1$ such that $c(F_N)\cup  c(W)
\subsetneq c(F_{N+1})\cup  c(W)$. This clearly contradicts $WF_N = F_{N+1}W$.

Now assume that $\bigcup_{i\geq 0}c(F_i)=Z$, where $Z$ is finite. For each   $\ell \geq 0$ the number of pruned trees $T \in \FAd(Z)$ with the property that $d(T)=\ell$ is finite. It follows that there exists an infinite subsequence of the idempotents $F_{i}$ such that  $0<d(F_{i_1}) <d(F_{i_2}) < \cdots$. Thus we may choose $N > 0$ such that $d(W) < d(F_N)< d(F_{N+1})$.
Consider the product $F_{N+1}W$. Since  $F_{N+1}$ is idempotent and $d(W) < d(F_{N+1})$ there exists an undirected path $p$ of length $d(F_{N+1})$ from the start vertex of $F_{N+1}$ that cannot be pruned in the product $F_{N+1}W$. Thus we may view $p$ as a path in $F_{N+1}W$. Notice that, since $F_{N+1}$ is idempotent, $p$ does not intersect the trunk of $F_{N+1}W$. Now consider the product $WF_{N}$ and let $q$ be a path from the start vertex that does not intersect the trunk. Since $q$ does not intersect the trunk, it follows that $q$ visits only the vertices of $W$ giving $|q| \leq d(W) < d(F_{N+1}) = |p|$ hence contradicting that $WF_N = F_{N+1}W$.
\end{proof}

\section{Weak coherence in free left Ehresmann monoids}
\label{sec:weakly}
The aim of this subsection is to demonstrate that the free left Ehresmann monoid of any rank is weakly coherent. To prove this we make use of an alternative description of the free left Ehresmann  monoid given in \cite{BGG} and \cite{GG}.

\begin{definition}(Normal forms)
Let $T \in \FLAd(X)$. Then $T$ can be uniquely written in the form
$$T = {t_0}e_1{t_1} \cdots e_m {t_m},$$
where $m \geq 0$, $e_1, \ldots, e_m$ are non-trivial idempotents (i.e. non-identity idempotents; the corresponding pruned tree contains at least one edge), ${t_1}, \ldots, {t_{m-1}} \in X^+$,  ${t_0}, {t_m} \in X^*$ and for $1 \leq i \leq n$, $e_i < ({t_i}e_{i+1}\cdots e_n{t_m})^+$. We call this unique expression the normal form of $T$. 
\end{definition}

\begin{remark}
\label{rem:algo}
In the proof of \cite[Lemma 3.1]{BGG} an algorithm is provided to convert an element of $\FLAd(X)$ written as a word over the alphabet $B_X:=X^* \cup \mathcal{E}_X$, where $\mathcal{E}_X$ denotes the set of non-trivial idempotents of $\FLAd(X)$, into normal form. The procedure to convert $d_1 \cdots d_\ell\in B_X^*$ to its normal form is as follows.
\begin{enumerate}
\item[(0)] Remove any $d_j=1_{X^*}$ with $0<j<\ell$ from $d_1\dots d_{\ell}$
and relabel the word as $c_1\cdots c_k$.
    \item[(I)] If $c_{t-1}=u'$ and $c_t= u''$ for some $t \geq 1$ and  $u',u''$ in $X^*$, or in $\mathcal{E}_X$, replace  $c_{t-1}c_t$ by $u \in B_X$ where $u=u'u''$. After all such replacements we arrive at a word $$v_0 f_1 v_1 f_2 \cdots  f_n v_n \in B_X^*,$$ where $n \geq 0$ and $v_0, v_n \in X^*$, $v_1, \ldots, v_{n-1} \in X^+$, and $f_t\in \mathcal{E}_X$ for $1 \leq t \leq n$.
    \item[(II)] For each idempotent $f$ in the current expression, let $b_{f} \in B_X^*$ denote the suffix sitting to the immediate right of $f$. Then working from right to left, if $fb_{f}^+ = b_{f}^+$ then we may remove $f$, otherwise we may replace $f$ by  $fb_{f}^+$; notice that after this adjustment we arrive at a word of the same overall form, but containing fewer idempotents $f'$ with $f'b_{f'} \neq f'$. Thus working from right to left we will produce a normal form.
\end{enumerate} 
\end{remark}

The recipe to obtain a normal form from a pruned tree $T$ may be described visually as follows. Take the factors lying in $X^*$ to be the trunk of $T$, factored according to the position of any branches of $T$ that cannot be pruned, and iteratively define the idempotents $e_i$ (working from right to left) to be the maximal  pruned tree that can be placed in this position and still yield the
same pruned tree  after composition with the existing right factor.

\begin{lemma}
Let $A, T \in \FLAd(X)$ with normal forms given by $A = {a_0}e_1{a_1}\cdots {a_{n-1}}e_n{a_n}$ and $T = {t_0}f_1{t_1}\cdots {t_{m-1}}f_m{t_m}$. \begin{enumerate}[label=\textnormal{(A\arabic*)}]
    \item If $a_nt_0 \neq 1$, then the normal form of $AT$ is  ${p_0}g_1{p_1} \cdots {p_k}g_k{w}f_1{t_1}\cdots f_m{t_m}$ for some ${p_1}, \ldots {p_k}, w\in X^+$, $p_0 \in X^*$, and non-trivial idempotents $g_1, \ldots, g_k$ where $t_0$ is a right factor of $w$.
    
    \item If $a_nt_0 = 1$, the normal form of $AT$ is ${p_0}g_1{p_1} \cdots g_{k-1}{p_k}d{t_1}\cdots f_m{t_m}$ for some ${p_1}, \ldots {p_k} \in X^+$, $p_0 \in X^*$, and non-trivial idempotents $d, g_1, \ldots, g_{k-1}$, where $f_1 \geq d$.
\end{enumerate}
\end{lemma}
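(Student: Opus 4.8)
The plan is to track how the normal forms of $A$ and $T$ combine under pruned multiplication, using the algorithm of Remark~\ref{rem:algo} as the computational engine. The starting point is to write $AT$ as the concatenation of the two normal forms, namely
\[
AT = {a_0}e_1{a_1}\cdots {a_{n-1}}e_n{a_n}{t_0}f_1{t_1}\cdots {t_{m-1}}f_m{t_m},
\]
viewed as a word in $B_X^*$, and then feed this word through steps (0), (I) and (II) of the algorithm to extract its normal form. The key observation is that the only place where the two factors interact is at the junction $e_n{a_n}{t_0}f_1$: everything strictly to the right of $f_1$ (namely ${t_1}\cdots f_m{t_m}$) is already a suffix of the normal form of $T$, and I claim that this suffix survives the normalisation unchanged. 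This is because step (II) processes idempotents from right to left and replaces each $f$ by $f b_f^+$ only according to the suffix $b_f$ to its \emph{right}; since $T$ was already in normal form, each of its idempotents $f_2,\dots,f_m$ already satisfies the defining inequality relative to its own right suffix, and these right suffixes are unaffected by anything happening to the left of $f_1$. Hence the tail ${t_1}f_2{t_2}\cdots f_m{t_m}$ is preserved verbatim, and the whole analysis reduces to understanding the \emph{leftmost} idempotent of the product.

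The two cases are then distinguished by whether the concatenation ${a_n}{t_0}$ is the empty word. If $a_nt_0 \neq 1$ (case (A1)), then after step (I) merges adjacent $X^*$-factors, the junction produces a genuine $X^+$-factor $w$ having ${a_n}{t_0}$ (and in particular ${t_0}$) as a right factor, sitting immediately to the left of $f_1$; the idempotent $f_1$ is then reprocessed in step (II) against its right suffix, but since that suffix is exactly what it was in $T$'s normal form, $f_1$ is unchanged and appears as the $f_1$ in the claimed form. Everything to the \emph{left} of this $w$ comes entirely from $A$ and, after normalisation, yields some prefix ${p_0}g_1{p_1}\cdots {p_k}g_k$ in normal form, giving exactly the stated shape. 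If instead ${a_n}{t_0} = 1$ (case (A2)), then ${a_n}={t_0}=1$ and the two idempotents $e_n$ and $f_1$ become adjacent in the word; step (I) does not merge them (they are distinct letters of $B_X$), but step (II), working right to left, replaces the left idempotent $e_n$ by $d := e_n f_1$ (equivalently $\overline{e_n \times f_1}$), the merged idempotent sitting in the position formerly held by $e_n$. The resulting $d$ satisfies $f_1 \geq d$ because $d = e_n f_1 \leq f_1$ in the semilattice order on idempotents (an idempotent product is below each factor), which is precisely the stated inequality; the factor ${t_1}$ to its right is inherited from $T$, and once again the portion arising from $A$ (now shorn of its final idempotent) normalises to a prefix ${p_0}g_1{p_1}\cdots g_{k-1}{p_k}$.

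**The main obstacle** I anticipate is verifying that the left-hand prefix coming from $A$ genuinely normalises to a \emph{normal form} of the asserted shape — that is, confirming that the idempotents $g_1,\dots,g_k$ (respectively $g_1,\dots,g_{k-1}$) satisfy their defining inequalities \emph{after} the interaction at the junction has taken place. The subtlety is that step (II) propagates changes leftward: altering $f_1$ (or creating $d$) could in principle force a readjustment of $e_n, e_{n-1}, \dots$ further to the left, so one cannot simply assert that the prefix of $A$ passes through untouched. The careful point to make is that this leftward propagation is exactly the normalisation algorithm running on the prefix ${a_0}e_1\cdots {a_{n-1}}e_n\,({a_n}{t_0})\,d$ (in case (A2), with the junction idempotent already fixed as $d$), and since the algorithm always terminates in \emph{some} normal form, the output must have the general shape ${p_0}g_1{p_1}\cdots$ with each $g_j \in \mathcal{E}_X$ and each $p_j \in X^+$ for $j\geq 1$ and $p_0 \in X^*$; the content of the lemma is merely that the rightmost factors $w f_1 {t_1}\cdots$ (respectively $d {t_1}\cdots$) are pinned down explicitly, while the prefix is left as an unspecified-but-normal expression. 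I would therefore present the argument as: (i) isolate the preserved tail via the right-to-left nature of step (II); (ii) compute the junction idempotent explicitly in each case; and (iii) appeal to termination of the algorithm to name the remaining left portion without computing it.
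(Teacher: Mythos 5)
Your proposal is correct and follows essentially the same route as the paper: concatenate the two normal forms, run the rewriting algorithm of Remark~\ref{rem:algo}, use the right-to-left nature of step (II) to see that the tail of $T$ survives unchanged, compute the junction explicitly ($f_1$ untouched in case (A1); $d=e_nf_1\leq f_1$ in case (A2)), and leave the portion arising from $A$ as an unspecified normal-form prefix. The only slip is mechanical: the adjacent idempotents $e_n$ and $f_1$ in case (A2) are merged into $e_nf_1$ already by step (I) of the algorithm, which explicitly combines consecutive letters lying in the same part ($X^*$ or $\mathcal{E}_X$), not by step (II); this misattribution does not affect your conclusion.
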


\begin{proof}
(1) Suppose first that $a_nt_0 \neq 1$. Let $k$ be minimal such that $e_k$ is not deleted in the  rewriting algorithm applied to $AT$. Then completing the process we will obtain
$$AT = {p_0}g_1{p_1} \cdots {p_k}g_k{w}f_1{t_1}\cdots f_m{t_m},$$
where $w={a_k\cdots a_n}t_0$ and $g_k = e_k ({a_k}\cdots {a_nt_0}f_1 \cdots f_m{t_m})^+= e_k (wf_1 \cdots f_m{t_m})^+$. 

(2) If $a_nt_0 = 1$, then $a_n=t_0=1$ and applying the rewriting algorithm above gives:
$$AT = {p_0}g_1{p_1} \cdots {p_k}d{t_1}\cdots f_m{t_m},$$
where $d= e_nf_1$, noting that $e_nf_1 \leq f_1 <({t_1}f_2\cdots f_m{t_m})^+$. 
\end{proof}

\begin{proposition}
\label{lem:FLEisweaklyleft}
The free left Ehresmann  monoid  $\FLAd(X)$ is left ideal Howson.
\end{proposition}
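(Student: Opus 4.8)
The plan is to prove the (formally stronger) statement that for all $A,B\in \FLAd(X)$ the left ideal $\FLAd(X)A\cap\FLAd(X)B$ is either empty or \emph{principal}, which a fortiori gives that $\FLAd(X)$ is left ideal Howson. Write $S=\FLAd(X)$ and fix normal forms $A=a_0e_1a_1\cdots e_na_n$ and $B=b_0f_1b_1\cdots f_mb_m$. The main engine is the product description of the previous lemma, applied with $A$ (respectively $B$) playing the role of the \emph{right} factor: cases (A1) and (A2) pin down the normal form, and hence the tree, of an arbitrary left multiple $CA$.

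First I would extract from (A1) and (A2) a clean membership criterion: $Z\in SA$ if and only if, viewing everything as trees, $\mathrm{trunk}(A)$ is a suffix of $\mathrm{trunk}(Z)$ and, at every trunk vertex of $Z$ lying strictly beyond the vertex $v_A$ at which the copy of $A$ begins, the idempotent (bouquet of branches) carried by $Z$ agrees \emph{exactly} with that of $A$, while at $v_A$ itself $Z$ may carry additional branches. The key point, visible directly in the lemma, is that left multiplication merely concatenates trunks and can enlarge the branch set only at the start vertex of $A$: this is precisely the dichotomy between (A1), where a nonempty junction block $w$ with $a_0$ as a right factor is produced and the tail $e_1a_1\cdots e_na_n$ is left untouched, and (A2), where the leading idempotent $e_1$ is replaced by some $d\le e_1$, which occurs exactly when $a_0=1$ so that $e_1$ sits at the start vertex.

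Given $Z\in SA\cap SB$, both $\mathrm{trunk}(A)$ and $\mathrm{trunk}(B)$ are suffixes of $\mathrm{trunk}(Z)$, so (as in a free monoid) one is a suffix of the other and the intersection is empty unless the two trunks are suffix-comparable. After possibly interchanging $A$ and $B$ I may assume $\mathrm{trunk}(A)$ is a suffix of $\mathrm{trunk}(B)$, which fixes a unique alignment of the copy of $A$ inside that of $B$, and I would then separate two regimes. If $\mathrm{trunk}(A)$ is \emph{strictly} shorter, then $v_A$ is interior to the copy of $B$, so by the criterion the branch data of any $Z\in SB$ is already forced there; the intersection is nonempty precisely when the branches of $B$ agree with those of $A$ at all vertices strictly beyond $v_A$ and dominate the start branches of $A$ at $v_A$, and in that case every $Z\in SB$ also lies in $SA$, so the intersection equals $SB$. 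If the two trunks are equal, the two start vertices coincide and both factors may be enlarged there; the intersection is nonempty precisely when $A$ and $B$ agree at all non-start trunk vertices, and is then generated by the single ``merge'' $M$ with the common trunk and, at each vertex, the union of the branch sets of $A$ and $B$ (these agreeing off the start vertex). Concretely $M=\epsilon_A B=\epsilon_B A$, where $\epsilon_A,\epsilon_B$ denote the idempotents recording the start branches of $A$ and $B$ (so $\epsilon_A=e_1$ when $a_0=1$ and $\epsilon_A=1$ otherwise); in the representative case $a_0=b_0=1$ this is simply $M=e_1B=f_1A$.

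In every case the intersection is principal or empty, and so $\FLAd(X)$ is left ideal Howson. I expect the main obstacle to be the first step, namely turning (A1)/(A2) into the exact branch-matching criterion and, in particular, handling the junction vertex correctly: one must show that extra branches are permitted \emph{only} at $v_A$ (and at $v_B$), that the interior idempotents of $A$ are rigid under left multiplication, and one must keep careful track of the case distinction $a_0=1$ versus $a_0\neq1$ which governs whether the leading idempotent $e_1$ is exposed at the start vertex. Once this criterion is established, the alignment, the compatibility conditions, and the identification of the single generator reduce to routine normal-form bookkeeping.
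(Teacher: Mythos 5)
Your proposal is correct and follows essentially the same route as the paper: both rest on the product--normal-form lemma (the cases (A1)/(A2)), reduce to showing that a nonempty intersection of two principal left ideals is principal, and arrive at the same generators ($A$, $B$, or the merge $e_1B=f_1A$ in the equal-trunk case, which is exactly the paper's generator $eT$ with $e=e_1f_1$). Your reorganisation of the case analysis by trunk alignment, via an explicit tree-theoretic membership criterion for the principal left ideal $SA$, is a cleaner packaging of what the paper does by comparing normal forms of witnesses $AT=BS$, but it is not a genuinely different argument.
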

\begin{proof} We show that the intersection of any two principal left ideals of the free left Ehresmann monoid $\FLAd(X)$ is either empty or principal.
Let $M = \FLAd(X)$ and let $T, S \in M$ have normal forms $T = {t_0}f_1{t_1}\cdots {t_{m-1}}f_m{t_m}$ and $S = {s_0}e_1{s_1}\cdots {s_{n-1}}e_n{s_n}$, and suppose that $MS \cap MT$ is not empty. We consider three cases:

Case (i): Suppose that there exist $A, B \in M$ such that $AT=BS$ and the normal forms of $AT$ and $BS$ are
\begin{eqnarray*}
AT &=& {p_0}g_1{p_1} \cdots {p_k}g_k{w}f_1{t_1}\cdots f_m{t_m}\\
BS &=&{q_0}h_1{q_1} \cdots {q_l}h_l v e_1{s_1}\cdots e_n{s_n},
\end{eqnarray*} where $w=w't_0$ and $v=v's_0$. Without loss of generality, assume that $m \leq n$ and write $n=m+r$ for some $r \geq 0$. If $r>0$ then by uniqueness of normal forms we have ${s_r} = w$, $e_{r+1} = f_1$, ${s_{r+1}} = {t_1}, \ldots, e_n=f_m$ and ${s_n} = {t_m}$. But then 
$$S = {s_0}e_1{s_1}\cdots {s_{n-1}}e_n{s_n} = {s_0}e_1{s_1}\cdots e_rw' T,$$ and so $MS \cap MT = MT$. On the other hand, if $r=0$ then $m=n$ and $w= v$, $f_1 = e_1$, ${t_1} = {s_1}, \ldots, f_m=e_n, {t_m}={s_n}$. Since $w't_0 = v's_0$ there exists ${u} \in X^*$ such that either ${t_0} = {u} {s_0}$, in which case $T = {u}S$, and so $MS \cap MT  = MS$ -  or else ${s_0} = {u} {t_0}$, in which case $S = uT$, and so $MS \cap MT  = MT$.

Case (ii) Suppose that there exist $A, B \in M$ such that $AT=BS$ and the normal forms of $AT$ and $BS$ are
\begin{eqnarray*}
AT &=& {p_0}g_1{p_1} \cdots {p_k}g_k{w}f_1{t_1}\cdots f_m{t_m}\\
BS &=&{q_0}h_1{q_1} \cdots h_{l-1}{q_l}d{s_1}\cdots e_n{s_n},
\end{eqnarray*}
where $w=w't_0$ and $d \leq e_1$. Recall in this case that we must have $s_0=1$.  If $m=n$ then by uniqueness of normal forms we have ${q_{l}} = w$, $d = f_1$, ${s_{1}} = {t_1}, \ldots, e_n=f_n$ and ${s_n} = {t_n}$. But then 
$$T = {t_0}f_1{t_1}\cdots {t_{n-1}}f_n{t_n} = {t_0}d{s_1}\cdots e_n{s_n} = {t_0}de_1{s_1}\cdots e_n{s_n}  = t_0dS,$$ and so $MS \cap MT = MS$. If $n=m+r$ for some $r>0$ then ${s_r} = w$, $e_{r+1} = f_1$, ${s_{r+1}} = {t_1}, \ldots, e_n=f_m, {s_n}={t_m}$, giving $S = e_1 {s_1}\cdots {s_{r-1}}e_rw'T$ and hence $MS \cap MT = MT$. Finally, if $m=n+r$ for some $r>0$, we have $f_{r+1} = d$, ${t_{r+1}} = {s_1}, \ldots, f_m=e_n, {t_m}={s_n}$, giving $T = t_0f_1t_1 \cdots t_rdS$, and so $MS \cap MT = MS$.

Case (iii): Suppose that whenever $U=AT=BS$, we have the following normal forms
\begin{eqnarray*}
AT &=& {p_0}g_1{p_1} \cdots g_{k-1}{p_k}d_A{t_1}\cdots f_m{t_m}\\
BS &=&{q_0}h_1{q_1} \cdots h_{l-1}{q_l}d_B{s_1}\cdots e_n{s_n},
\end{eqnarray*}
where $d_A \leq f_1$ and $d_B \leq e_1$.
Recall that in this case we must have $t_0=s_0=1$. Without loss of generality, we may assume that $m \leq n$ and write $n=m+r$ for some $r \geq 0$. If $r>0$ then by uniqueness of normal forms we have $e_{r+1} =d_A$, ${s_{r+1}} = {t_1}, \ldots, e_n=f_m$ and ${s_n} = {t_m}$. But then 
$$S = e_1{s_1}\cdots {s_r}d_A{t_1} \cdots f_m {t_m} = e_1{s_1}\cdots {s_r}d_Af_1 {t_1} \cdots f_m {t_m} =e_1{s_1}\cdots {s_r}d_A T,$$ and so $MS \cap MS  = MT$. 

Suppose now that  $r=0$. Then $m=n$, $l=k$, and we have ${p_i} = {q_i}$,  ${s_i}={t_i}$ and $g_i=h_i$ for all $i$, $d_A=d_B$ and $e_i = f_i$ for all $i>1$. Set $e = e_1f_1$. We show that $MS \cap MT = MeT$. To see this, first note that since $d_A=d_B$ we have $d_A \leq e_1$ and hence $d_A\leq e$.  By the assumption of this case, for each $U =AT=BS \in MS \cap MT$ we may therefore write
$$U = {p_0}g_1{p_1} \cdots g_{l-1}{p_l}d_A{t_1}\cdots f_m{t_m} = {p_0}g_1{p_1} \cdots g_{l-1}{p_l}d_Aee_1{t_1}\cdots f_m{t_m}  = {p_0}g_1{p_1} \cdots g_{l-1}{p_l}d_AeT$$
demonstrating that $MS \cap MT \subseteq MeT$. Finally, recalling that $s_0=t_0=1$ we have
\begin{eqnarray*}
eT &=& e_1f_1(f_1{t_1}\cdots {t_{m-1}}f_m{t_m}) = e_1f_1({t_1}\cdots {t_{m-1}}f_m{t_m})\\ &=& e_1f_1({s_1}\cdots {s_{m-1}}e_m{s_m}) = e_1f_1(e_1{s_1}\cdots {s_{m-1}}e_m{s_m}) = eS,
\end{eqnarray*}
from which it follows that $MeT = MeS \subseteq MS \cap MT$. Thus $MS \cap MT = MeT$.
\end{proof}

That the free left Ehresmann monoid is finitely right equated can also be shown by working with normal forms.

\begin{proposition}
\label{lem:FLEisFRE}
The free left Ehresmann monoid $\FLAd(X)$ is finitely right equated.
\end{proposition}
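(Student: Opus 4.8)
The plan is to prove that for every $a\in\FLAd(X)$ the right annihilator congruence $\mathbf{r}(a)$ is generated by at most one pair. The starting point is one soft observation valid in \emph{any} monoid: for an idempotent $e$ one has $\mathbf{r}(e)=\langle(e,1)\rangle$. The inclusion $\supseteq$ is immediate, since $\mathbf{r}(e)$ is a right congruence containing $(e,1)$ (as $ee=e1$). For $\subseteq$, suppose $eu=ev$; right-translating the generator $(e,1)$ by $u$ and by $v$ gives $(u,eu),(ev,v)\in\langle(e,1)\rangle$, and then $eu=ev$ yields $(u,v)\in\langle(e,1)\rangle$ by transitivity. Thus the right annihilator of any idempotent is singly generated, and the whole task reduces to bringing a general $a$ into this situation.

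The reduction I would carry out uses the normal form $a=a_0e_1a_1\cdots e_na_n$ together with the product formulas above. I claim that $a$ always factors as $a=c\,e$, where $e$ is an idempotent (namely $e=e_n$ if $a_n=1$, and $e=1$ if $a_n\neq1$) and $c$ is left cancellable. When $a_n\neq1$ this is trivial, taking $c=a$ and $e=1$. When $a_n=1$, so that $a$ ends in the idempotent $e_n$, set $c=a_0e_1a_1\cdots e_{n-1}a_{n-1}$; the concatenation $c\,e_n$ is already in normal form, since the defining inequality $e_{n-1}<(a_{n-1}e_n)^+$ of the normal form of $a$ guarantees that $e_{n-1}$ is neither deleted nor altered when the product $c\,e_n$ is computed via (A1), so $c\,e_n=a$. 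Here $c$ ends in the non-empty trunk $a_{n-1}$ (or $c=a_0\in X^*$ when $n=1$). Granting that such $c$ is left cancellable, we get $au=av\iff e_n u=e_n v$, whence $\mathbf{r}(a)=\mathbf{r}(e_n)=\langle(e_n,1)\rangle$; and in the case $a_n\neq1$ left cancellability of $c=a$ gives $\mathbf{r}(a)=\Delta$, the equality relation. Either way $\mathbf{r}(a)$ is finitely generated.

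Everything therefore rests on the cancellation claim, which I expect to be the only delicate point: \emph{if the normal form of $a$ ends in a non-empty trunk segment, then $au=av$ forces $u=v$.} I would argue with the tree description. Since $a_n\neq1$, the end vertex of the pruned tree $a$ carries no branch (non-trunk) edges — all branch structure of $a$ sits strictly before the end vertex. In forming $\overline{a\times u}$ the tree $u$ is glued at this end vertex, and I would verify two things: first, no pruning can take place at or beyond the gluing vertex, because the only forward edges there are those of $u$, which is already pruned; second, although extending the trunk by $u$ may now allow some branch of $a$ lying strictly before the gluing vertex to retract, any such pruning only deletes edges of $a$ and never disturbs the part of the tree at or beyond the gluing vertex. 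Hence the subtree of $\overline{a\times u}$ from the gluing vertex onward is exactly $u$, so $u$ is recoverable from $au$ and left cancellativity follows. The subtle part is precisely the second point — that lengthening the trunk can prune branches of $a$ yet cannot corrupt the embedded copy of $u$; if preferred this can instead be read off the product formulas (A1) and (A2), where the suffix $f_1u_1\cdots f_mu_m$ of the normal form of $au$ reproduces $u$ beyond its leading trunk factor.
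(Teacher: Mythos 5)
Your proposal is correct, but it takes a genuinely different route from the paper's. The paper argues by brute force on normal forms: it computes the normal forms of $TU$ and $TV$, matches factors from the right, and runs through cases according to whether $t_m$, $u_0$, $v_0$ are trivial, concluding in each case that either $U=V$ or $f_mU=f_mV$, whence $\mathbf{r}(T)$ is either the equality relation or $\langle(1,f_m)\rangle$. You reach the same two conclusions, but by isolating two structural facts: that $\mathbf{r}(e)=\langle(e,1)\rangle$ for an idempotent $e$ of any monoid, and that an element of $\FLAd(X)$ whose pruned tree carries no branches at its end vertex is left cancellable; the factorisation $a=ce_n$ (respectively $a=c$) then does all the work. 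A pleasant by-product, which the paper leaves implicit, is that every element of $\FLAd(X)$ is $\mathcal{L}^*$-related to an idempotent ($e_n$ or $1$), i.e.\ the monoid is right abundant, so the statement also follows from the right-handed analogue of Proposition~\ref{prop:rabundweak}. The price is that the entire burden falls on the cancellation lemma, which you only sketch; to make it airtight you should record (i) that in a left Ehresmann tree every edge is directed away from the start vertex, so the only edge added at the gluing vertex by prefixing $a$ is the incoming trunk edge, which can absorb no outgoing branch of $u$ -- hence any retraction of $a\times u$ moving an edge of $u$ would restrict to a non-identity retraction of $u$ itself, contradicting that $u$ is pruned; and (ii) that an isomorphism $au\cong av$ fixes the trunk pointwise, hence fixes the gluing vertex (the trunk vertex whose distance from the start is the trunk length of $a$), and therefore restricts to an isomorphism between the copies of $u$ and $v$ sitting at and beyond it, which is where $u=v$ actually comes from. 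With those two points supplied, your argument is a clean and shorter alternative to the paper's case analysis.
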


\begin{proof}
Let $T = {t_0}f_1 \cdots f_m {t_m}$ be in normal form and suppose that $(U, V) \in \mathbf{r}(T)$. It follows from the discussion above that $U$ and $V$ must have `a large amount of agreement' in their normal forms; we utilise this to show that $\mathbf{r}(T)$ is finitely generated.

If $m=0$ then we have $T={t_0}$ and  a trivial calculation reveals that $U=V$ from which it is immediate that $\mathbf{r}(T)$ is finitely generated.

Suppose then from now on that $m \geq 1$. In what follows, let us write $U = {u_0}e_1 \cdots e_n {u_n}$ and $V = {v_0}e'_1 \cdots e'_l {v_l}$ in normal form and assume that $n \geq l$ writing $n=l+r$ for some $ r\geq 0$. Proceeding as before,the normal forms of $TU$ and $TV$ may be written as:
\begin{eqnarray*}
TU &=& {w_k}g_{k}\cdots {w_1}g_1{w_0}g_0{u_1}e_2\cdots e_n{u_n}\\
TV &=& {z_j}h_{j}\cdots {z_1}h_1{z_0}h_0{v_1}e'_2 \cdots e'_l{v_l},
\end{eqnarray*}
where $k,j \geq 0$, each $g_i, h_i$ is a non-trivial idempotent, all ${w_i}, {z_i} \in X^*$ with only $w_k, z_j$ possibly equal to $1$, and further:
\begin{itemize}
    \item[(I)] if $t_mu_0 \neq 1$, then $g_0=e_1$ and ${w_0} = t_a \cdots t_m u_0$ for some $0 \leq a \leq m$;
    \item[(II)] if $t_mu_0 = 1$, then $g_0 =f_me_1$ and ${w_0} = t_a \cdots t_{m}$ for some $0 \leq a \leq m$;
    \item[(I$'$)] if  $t_mv_0 \neq 1$, then $h_0=e'_1$ and ${z_0} = t_b \cdots t_m v_0$ for some $0 \leq b \leq m$;
    \item[(II$'$)] if $t_mv_0 = 1$, then $h_0 =f_me'_1$ and ${z_0} = t_b \cdots t_{m}$ for some $0 \leq b \leq m$.
\end{itemize}

By uniqueness of normal forms,  it follows immediately that (in all cases) we have:
\begin{enumerate}[label=\textnormal{(\arabic*)}]
    \item $t_0\cdots t_m u_0 u_1 \cdots u_n =w_k\cdots w_1w_0u_1 \cdots u_n = z_j\cdots z_1z_0v_1 \cdots v_l =t_0\cdots t_m v_0 v_1 \cdots v_l$ (by comparing the `trunks' of the two expressions) and
    \item $u_n = v_l, \ldots, u_{r+2} = v_2, u_{r+1} = v_1$,  and $e_n = e'_l,\ldots , e_{r+2}=e'_2$
    (by reading both expressions from right to left and comparing the first few factors),
    \item $
v_0 = u_0 u_1 \cdots u_r$ (by cancellation, using (2) and the equality between the first and last expressions in (1)). 
\end{enumerate}

\medskip
If $t_m \neq 1$, we show that $U=V$. Indeed, in this case we have that $t_mu_0 \neq 1 \neq t_mv_0$, and it follows from (I) and (I$'$) above that we have $g_0=e_1$, $h_0=e'_1$, and $w_0 = t_a\cdots t_mu_0$ and $z_0 = t_b\cdots t_mv_0$ for some $0 \leq a,b \leq m$. Thus continuing the comparison of factors begun in (2), we immediately obtain $u_n = v_l, \ldots, u_{r+2} = v_2, u_{r+1} = v_1, e_n = e'_l, \ldots, e_{r+2} = e'_2$ and $e_{r+1} = e'_1$. Notice that if $r=0$, then by (3) $u_0=v_0$ and hence $U=V$. Suppose then for contradiction that $r>0$. Then continuing our comparison of factors, we must also have $z_0 = u_r$, giving $t_b\cdots t_mv_0 = u_r$. But then substituting $v_0 = u_0 u_1 \cdots u_r$  gives $t_b\cdots t_mu_0 u_1 \cdots u_r = u_r$, and by cancellation we obtain $t_b\cdots t_mu_0 u_1 \cdots u_{r-1} = 1$, contradicting that $t_m \neq 1$. Thus if $m \geq 1$ and $t_m \neq 1$ we have shown that $TU=TV$ implies $U=V$, from which it  follows that $\mathbf{r}(T)$ is finitely generated.

\medskip
From now on we suppose that $m \geq 1$ and $t_m=1$, aiming to show that in this case $\mathbf{r}(T)$ is finitely generated. Suppose that $U$ and $V$ are as above with $TU=TV$. We now divide into cases based on whether $u_0=1$ or not and whether $v_0 = 1$ or not. Notice that if $v_0=1  \neq u_0$, we obtain an immediate contradiction to (3), so this situation cannot occur. This leaves three possibilities to explore.

\medskip
If $u_0 \neq 1 \neq v_0$, then arguing exactly as in the case where $t_m \neq 1$ using (I) and (I$'$), we obtain $u_n = v_l, \ldots, u_{r+2} = v_2, u_{r+1} = v_1, e_n = e'_l, \ldots, e_{r+2} = e'_2$ and $e_{r+1} = e'_1$. If $r>0$ we again obtain $t_b\cdots t_mu_0 u_1 \cdots u_{r-1} = 1$, contradicting that $u_0 \neq 1$. Thus $r=0$, $u_0=v_0$ by (3), and hence $U=V$.

\medskip
If $u_0=1  \neq v_0$, then in this case by (II) and (I$'$) we have $g_0 =f_me_1$, $h_0=e'_1$, $w_0 = t_a \cdots t_{m}$, and $z_0 = t_b \cdots t_{m}v_0$, for some $0 \leq a,b \leq m$. Notice also that it follows immediately from (3) that $r>0$, since $u_0 \neq v_0$. Continuing our comparison of factors from (2) we have $e_{r+1} = h_0 = e'_1$ and $z_0=u_r = t_b \cdots t_m v_0$  which by (3) gives $u_r = t_b \cdots t_mu_0\cdots u_r$. Since $r>0$ and $u_i \neq 1$ for all $0<i<r \leq n$, it follows from the last equation that  $r=1$, giving $t_b\cdots t_mu_0=1=t_b\cdots t_m$ and $z_0=v_0$. Further, note that (by comparisons of factors in the normal forms) we must also have $g_0=h_1$,  $u_1=z_0=v_0$ and $e_2=h_0=e'_1$.
By (2), (3) and the previous lines we then have $u_i = v_{i-1}$ for all $1 \leq i \leq n=l+1$, $e_i = e'_{i-1}$ for all $2 \leq i \leq n=l+1$. In particular, recalling that $u_0=1$ in this case, $U =e_1V$. Since $z_0 = v_0$, it follows from the construction of our normal form that $f_mV^+ \neq V^+$ (otherwise $z_0$ would contain $v_0$ as a proper factor; however we have seen that these are equal) and so $h_1 = f_mV^+$. Hence 
$f_me_1V^+ = g_0V^+=h_1V^+ = f_mV^+$.  Since $V^+ \mathcal{R}^* V$ the latter gives $f_mU = f_me_1V = f_mV$.

\medskip
Finally, if $t_m =u_0 = v_0 = 1$, then by (II) and (II$'$) we immediately have $g_0 =f_me_1$, $h_0 =f_me'_1$,  $w_0 = t_a \cdots t_{m}$, and $z_0 = t_b \cdots t_{m}$, for some $0 \leq a,b \leq m$. Recalling that $r\geq 0$ and $u_i \neq 1$ for all $0<i< n$, we also deduce from (3) that either $r=0$ or else $r=n=1$ and $u_0=u_1=1$. Suppose first that $r=0$. Then by the assumption of this case and (2) we have $u_i=v_i$ for $0 \leq i \leq n=l$ and $e_i=e'_i$ for $2 \leq i \leq n=l$. Thus ${u_1}e_2 \cdots e_n{u_n} = {v_1}e'_2 \cdots e'_n{v_n}$. Calling this element $\beta$  and continuing our comparison of factors of the normal form, we have that $f_me_1=g_0=h_0 =f_me'_1$. Thus
$U = e_1 \beta$ and $V = e_1'\beta$, where  $f_me_1\beta^+ =f_me'_1\beta^+$.  Since $\beta^+ \mathcal{R}^* \beta$ the latter gives $f_mU =f_me_1\beta = f_me'_1\beta = f_mV$. On the other hand, if $r=n=1$ and $u_0=u_n=0$, then we find that $l= 0$, $V=1$, $U=e_1$  and hence $t_0 f_1 t_1 \cdots t_m f_m = T= TV=TU= Te_1 = t_0 f_1 t_1 \cdots t_m f_me_1$. In particular, we must have $f_mV =f_m=f_me_1 =f_mU$.

\medskip
Now, notice that for $t_m=1$ we have in all cases  deduced that either $U=V$ or else that $f_mU=f_mV$, from which it follows that $\mathbf{r}(T) \subseteq \mathbf{r}(f_m)$. Conversely, if $(P,Q) \in \mathbf{r}(f_m)$, then we have $f_mP=f_mQ$, and since $t_m=1$ this gives $TP = {t_0}f_1 \cdots f_m {t_m}P = {t_0}f_1 \cdots f_mP = {t_0}f_1 \cdots f_mQ = {t_0}f_1 \cdots f_m {t_m}Q = TQ$, showing that $\mathbf{r}(T) = \mathbf{r}(f_m) = \langle (1,f_m)\rangle$.
\end{proof}

\begin{proposition}
\label{lem:FLEisweaklyright}
The free left Ehresmann monoid $\FLAd(X)$ is right ideal Howson.
\end{proposition}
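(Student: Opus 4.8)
The plan is to show that for any $T,S \in M:=\FLAd(X)$ the intersection $TM\cap SM$ of the two principal right ideals is either empty or again principal; since a principal (or empty) right ideal is finitely generated, this gives that $M$ is right ideal Howson. Throughout I would argue with the normal forms $T=t_0e_1t_1\cdots e_mt_m$ and $S=s_0f_1s_1\cdots f_ns_n$, exactly as in the proofs of Propositions \ref{lem:FLEisweaklyleft} and \ref{lem:FLEisFRE}, and rely on the description of normal forms of products recorded in the lemma preceding Proposition \ref{lem:FLEisweaklyleft}. The first, easy, observation is a constraint coming from trunks: since right multiplication glues the second factor onto the end vertex of the first, and trunk edges are never pruned, the trunk of any product $Tp$ has the trunk $t_0\cdots t_m$ of $T$ as a prefix, and dually for $S$. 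Hence if the trunks of $T$ and $S$ are not prefix-comparable then $TM\cap SM=\emptyset$ and there is nothing to prove; so I may assume without loss of generality that $t_0\cdots t_m$ is a prefix of $s_0\cdots s_n$.

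The heart of the argument is to understand a typical common element $U=Tp=Sq$ by applying the product lemma to both $Tp$ and $Sq$ and comparing the two resulting normal forms using their uniqueness. The key structural point, which I would isolate first, is that right multiplication alters the first factor only near its end vertex: the branch structure of $T$ at trunk vertices lying strictly before its end vertex is carried unchanged into $Tp$, while only the idempotent(s) sitting at the end vertex can merge with, or be enlarged using, the appended factor (any trailing idempotents being merely re-recorded through the enlargement $g_k=e_k(\cdots)^{+}$ that appears in the product lemma). Feeding $U=Tp=Sq$ into this description forces $T$ and $S$ to agree, vertex by vertex, on all branch structure occurring strictly before the end vertex of $T$, and to be compatible at that end vertex; if any such agreement fails then no common element exists and $TM\cap SM=\emptyset$.

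Granting compatibility, I would then produce the generator by splitting into the two remaining configurations, mirroring the trivial-trunk-piece casework of Propositions \ref{lem:FLEisweaklyleft} and \ref{lem:FLEisFRE}. If the trunk of $T$ is a \emph{proper} prefix of that of $S$, then compatibility forces the branch structure of $S$ up to position $|t_0\cdots t_m|$ to contain that of $T$, which makes $S$ itself a right multiple of $T$, so that $TM\cap SM=SM$. If the trunks \emph{coincide}, then after matching the interiors the only freedom is at the common end vertex, and I would check that the element $U_0=T f_n=S e_m$, obtained by depositing the end-vertex idempotent of each factor onto the other, lies in $TM\cap SM$ and generates it; the degenerate subcases (one or both of $t_m,s_n$ trivial, or $T=S$ outright) are handled separately as in the cited proofs. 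In every case $TM\cap SM$ is empty or principal, which yields the proposition.

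I expect the main obstacle to be the structural statement controlling the effect of right multiplication on normal forms. Unlike left multiplication, which is handled cleanly in Proposition \ref{lem:FLEisweaklyleft} (the second factor's tail is preserved verbatim), right multiplication interacts with the unary operation $T\mapsto T^{+}$ in a way that can re-record several idempotents of the first factor simultaneously via the enlarged idempotent $g_k$. Consequently the bookkeeping needed to pin down precisely which element generates the intersection — in particular to verify the identity $U_0=T f_n=S e_m$ in the correct subcase, and to rule out spurious additional generators so that the intersection is genuinely principal rather than merely finitely generated — will demand the same kind of careful, case-by-case comparison of normal forms already carried out in the proofs of Propositions \ref{lem:FLEisweaklyleft} and \ref{lem:FLEisFRE}.
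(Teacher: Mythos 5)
Your plan—show $TM\cap SM$ is empty or principal by comparing normal forms—is a genuinely different route from the paper's (which colours the edges of a pruned tree red or blue according to whether they can ever be pruned in a product $UU'$, and then exhibits a \emph{finite, generally non-singleton}, generating set $Z_L=TM\cap SM\cap B_L$ of bounded-depth elements). Unfortunately the argument rests on a structural claim that is false. You assert that right multiplication alters $T$ only near its end vertex, so that the branch structure of $T$ at trunk vertices strictly before the end vertex is carried unchanged into $Tp$. This fails: a branch attached at \emph{any} trunk vertex can be absorbed by the right-hand factor provided it extends past the end vertex. Concretely, take $T=(xy)^+x$, whose root carries the branch $x\to y$ and whose trunk is the single edge $x$; then $Ty=xy$, so the root branch of $T$ is pruned away in the product. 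This is precisely the phenomenon the paper's proof is organised around (the blue branches $b_{T,k,i}$ and the retained sets $P_k$ at \emph{every} position $0\le k\le m-1$), and it is also visible in the product lemma you cite, where idempotents of the left-hand factor at arbitrary positions may be deleted.

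The downstream consequences are fatal to the proposed casework. Your criterion for emptiness is wrong: with $T=(xy)^+x$ and $S=x$ the trunks coincide but the root branch structures differ, yet $TM\cap SM=xy^+M\neq\emptyset$ (indeed $Ty^+=Sy^+=xy^+$). In this example the generator is $Ty^+=Sy^+$, not an element of the form $Tf_n=Se_m$ manufactured from the terminal idempotents of the normal forms, so the proposed generator is also wrong. Finally, even where your casework would go through, you are claiming something strictly stronger than the proposition requires and than the paper establishes: the paper never shows the intersection is principal, only that each $U=TA=SCA$ in the intersection is a right multiple of some $U'=Tf=SCf$ of depth at most $\max(d(T),d(S))$ over the finite label set of $T$ and $S$, where the idempotent $f$ \emph{depends on} $A$; the resulting generating set is finite but may contain several incomparable elements. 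Nothing in your sketch substantiates principality, and repairing the argument would require confronting exactly the branch-absorption bookkeeping that the paper's colouring argument is designed to control.
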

\begin{proof}
 Let $M:= \FLAd(X)$ where elements of $\FLAd(X)$ are viewed once more as pruned trees, and let $\mathcal{E}_X^1$ denote the set of all idempotents of $M$ (including $1$). For each pruned tree $U \in M$ let us write let $d(U)$ to denote the maximal length of a directed path from the start vertex in the pruned tree $U$. Then we may write down a unique factorisation of $U$ over the alphabet $X \cup \mathcal{E}_X^1$ as follows:
\begin{equation}
\label{fact}
e_{U,0}x_{U,1} e_{U,1} \cdots e_{U, l-1} x_{U,l} e_{U,l}
\end{equation}
where $d(U) \geq l \geq 0$, for each $i \geq 1$, $x_{U,i} \in X$ are such that $x_{U,1}\cdots x_{U,l}$ is the trunk of $U$, and $e_{U,i}$ is the idempotent corresponding to the unprunable branches (if any) at position $i$ along the trunk.  Colour the edges of $U$ as follows: if there exists a tree $U'$ such that edge $e$ of $U$ can be pruned in computation of the product $UU'$, then colour $e$ blue. Otherwise, colour $e$ red. Notice that the trunk of $U$ is red and moreover each branch of $U$ is either completely blue or completely red. From the colouring of $U$ we may further factorise each $e_{U,k}$ as a product of incomparable idempotents
$$e_{U,k} = r_{U,k}\Pi_{i \in I_k} b_{U,k,i}$$ where $r_{U,k}$ is the idempotent corresponding to the (unprunable in every $UU'$) red tree at position $k$ along the trunk and the $b_{U,k,j}$ are the individual (unprunable in $U$, but prunable in some $UU'$) blue branches connected at position $i$ along the trunk and indexed by a finite set $I_k$. Notice that all branches of the final idempotent $e_{U,l}$ will be coloured blue, since these edges can all be pruned in computation of the product $Ue_{U,l}$, and so $r_{U,l} =1$.

Now suppose that $S$ and $T$ are two pruned trees in $M$ and 
that $TM \cap SM$ is non-empty. It follows immediately that the trunks of $S$ and $T$ must be comparable in the $\mathcal{R}$-order on $X^*$. Without loss of generality, let us suppose that the trunk of $T$ is $t=x_1\cdots x_n \cdots x_m$ and the trunk of $S$ is $s = x_1 \cdots x_n$ where $m \geq n \geq 0$ and $x_i \in X$ for all $1 \leq 
i \leq m$. By the previous paragraph we may factorise $T$ and $S$ as: 
\begin{eqnarray*}T &=& (r_{T,0} \Pi_{i \in I_0} b_{T,0,i})x_1(r_{T,1}\Pi_{i \in I_1} b_{T,1,i})x_2 \cdots x_n (r_{T,n}\Pi_{i \in I_n} b_{T,n,i}) \cdots x_m (\Pi_{i \in I_{m}} b_{T,m,i}),
\\
S &=& (r_{S,0} \Pi_{i \in J_0} b_{S,0,i})x_1(r_{S,1}\Pi_{i \in J_1} b_{S,1,i})x_2 \cdots x_n (\Pi_{i \in J_n} b_{S,n,i}),\end{eqnarray*}
where each bracketed expression gives a factorisation of the unique idempotent at the corresponding position along the trunk according to the colouring of its branches.

Let $U \in TM \cap SM$, with unique factorisation over $X \cup \mathcal{E}_X^1$ as given in equation \eqref{fact}. Since $U = TT'=SS'$ for some $T', S' \in M$, from the pruning process we may write
\begin{equation}
\label{eq:U}U=e_0x_1e_1\cdots e_{q} x_{q+1} \cdots e_{m-1}x_mA, \mbox{ where } q = {\rm min}(n,m-1) \mbox{ and }
\end{equation}
\begin{itemize}
    \item[(a)] $A \in M$ with $e_{T,m}A = A$ and if $m=n$  also $e_{S,n}A = A$, 
    \item[(b)] for $0 \leq k \leq m-1$, $e_k = r_{T,k} \Pi_{i \in P_k} b_{T, k, i}$ where $P_k = \{i \in I_k: b_{T, k, i} \ngeq (x_{k+1}e_{k+1}\cdots x_mA)^+\}$,
\item[(c)] for $0 \leq k \leq q$, 
$e_k = g_k \Pi_{i \in Q_k} b_{S, k, i},$
where $Q_k = \{i \in J_k: b_{S, k, i} \ngeq (x_{k+1}e_{k+1}\cdots x_mA)^+\}$ and where $g_k = r_{S,k}$ if $k<n$ whilst if $k=n\leq m-1$ then either $g_n=1$ or $g_n$ is incomparable to $\Pi_{i \in Q_n} b_{S, n, i}$ and satisfies $g_n \ngeq (x_{n+1} e_{n+1}\cdots x_mA)^+$.
\end{itemize}

Conversely, suppose that $U$ is as in equation \eqref{eq:U} with factors satisfying condition (a)-(c) above. Then from the factorisation of $T$  and the fact that $e_{T,m}A = A$ we see that the (unpruned) product of $T$ and $A$ may be written as:
\begin{eqnarray*}
TA &=& e_{T,0} \, x_1e_{T,1} \, \cdots x_ne_{T,n} \cdots x_me_{T,m}A\\
&=&e_0e'_0\, x_1e_1e'_1 \cdots x_q e_qe'_q \cdots e_{m-1}e_{m-1}'x_mA, 
\end{eqnarray*}
where for $k=0,\ldots, m-1$, $e_k' = \Pi_{i \in I_k\setminus P_k}b_{T, k,i}$, so that $e_{T,k} = e_ke_k'$. From the definition of $P_k$, each $e_k'$ satisfies $e_k'(x_{k+1}e_{k+1}\cdots x_mA)^+ = (x_{k+1}e_{k+1}\cdots x_mA)^+$. Thus after pruning these idempotents we obtain:
\begin{eqnarray*}
TA &=&e_0x_1e_1 \cdots x_q e_q \cdots x_m A= U \in TM.
\end{eqnarray*}
By a similar argument, writing $e_k'' = \Pi_{i \in J_k \setminus Q_k}b_{S, k,i}$ so that $e_{S,k} = e_ke_k''$ for all $0 \leq k \leq q$, we find that if $n<m$, then $g_ne_n = e_n$ and hence
\begin{eqnarray*}
Sg_nx_{n+1}e_{n+1}\cdots x_mA &=&e_0e_0''x_1e_1e_1''x_2 \cdots x_n e_ne_n''g_n x_{n+1} \cdots x_mA\\
&=&e_0x_1e_1x_2 \cdots x_n e_n x_{n+1} \cdots x_m e_mA = U \in SM,
\end{eqnarray*}
whilst if $n=m$ we have $e_{S,n}A = A$ and hence
\begin{eqnarray*}
SA &=&e_0e_0''x_1e_1e_1''x_2 \cdots x_{n-1}e_{n-1}e_{n-1}''x_ne_{S,n}A\\
&=&e_0x_1e_1x_2 \cdots x_m e_mA = U \in SM.
\end{eqnarray*}
Let us write $C =g_nx_{n+1}\cdots x_m$ if $n<m$ and $C = 1$ if $n=m$, so that in all cases $SCA = U$.

Now let $Y \subseteq X$ be the finite set of elements that label an edge in either $T$ or $S$, let $M_Y$ denote the subset of $M$ consisting of all pruned trees whose labels are drawn from the set $Y$, and,  as before, for each $V \in M_Y$ let $d(V)$ denote the maximal length of a directed path from the start vertex in the pruned tree $V$. Then $M_Y \cong \FLAd(Y)$ and for each $d \geq 0$ the set $B_{d} = \{V \in M_Y: d(V)\leq d\}$ is finite. In particular, the set $Z_L = TM \cap SM \cap B_{L}$, where $L = {\rm max}(d(T), d(S))$ is finite.  We claim that for each $U \in TM \cap SM$, 
there exist $U'\in Z_{L}$  and $U'' \in M$, such that $U = U'U''$, giving that $Z_{L}$ is a finite generating set for $TM \cap SM$. 
To see this, let $U$ be as in \eqref{eq:U}, so that in particular $U=TA=SCA$, with $A$ and $C$ as defined above, and let $f$ be the idempotent subtree of $A^+$ 
consisting of those edges that either some edge of $T$ can retract onto in computation of the product $TA^+$ or some edge of $S$ can retract onto in the computation of the product $SCA^+$. Note that the blue edges of $T$ that are pruned in the product $Tf$ are precisely the same as the blue edges of $T$ that are pruned in the product $TA^+$, which in turn are precisely the blue edges of $T$ that are pruned in the product $TA$. Likewise, the blue edges of $S$ that are pruned in the product $SCf$ are precisely the same as the blue edges of $S$ that are pruned in the product $SCA^+$, which in turn are precisely the blue edges of $S$ pruned in the product $SCA$. Moreover, by construction, 
$$fA = fA^+A = A^+A = A,$$
and hence $U = TA = TfA$ and $U = SCA = SCfA$.
Then setting $U' = e_0x_1e_1x_2 \cdots x_mf$ we see that $U' \in M_Y$,  $d(U')\leq L$ and $U' = Tf = SCf \in TM \cap SM$, giving $U = U'U''$ for $U' \in Z_L$ and $U'' = A \in M$.
\end{proof}

\begin{theorem}
\label{thm:FLEisweaklycoherent}
The free left Ehresmann  monoid  $\FLAd(X)$ is weakly coherent, that is, it is weakly left and weakly right coherent. 
\end{theorem}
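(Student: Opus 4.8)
The plan is to deduce this theorem as a synthesis of the three preceding propositions together with the criterion of Theorem~\ref{thm:weakcoherent}, rather than through any new computation. Since weak coherence means both weak left and weak right coherence, I would treat the two sides separately, and the key realisation is that the arguments are genuinely asymmetric: the free left Ehresmann monoid carries a well-behaved $\mathcal{R}^*$-structure but \emph{not}, in general, a well-behaved $\mathcal{L}^*$-structure, so a shortcut available on the left is unavailable on the right.

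For weak \emph{right} coherence I would apply Theorem~\ref{thm:weakcoherent} directly. Its two conditions are exactly (i) that $aS\cap bS$ be finitely generated for all $a,b$, which is precisely the assertion that $\FLAd(X)$ is right ideal Howson (Proposition~\ref{lem:FLEisweaklyright}), and (ii) that each right annihilator $\mathbf{r}(a)$ be finitely generated, which is precisely the assertion that $\FLAd(X)$ is finitely right equated (Proposition~\ref{lem:FLEisFRE}). Assembling these two propositions therefore yields weak right coherence immediately.

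For weak \emph{left} coherence I would instead invoke the left--right dual of Proposition~\ref{prop:rabundweak}. Here the point to notice is that $\FLAd(X)$ is the free left adequate monoid (free left Ehresmann and free left adequate monoids coincide), hence in particular left adequate, so every $\mathcal{R}^*$-class contains an idempotent; that is, $\FLAd(X)$ is left abundant. The dual of Proposition~\ref{prop:rabundweak} then tells me that a left abundant monoid is automatically finitely left equated, and that for such a monoid weak left coherence is equivalent to being left ideal Howson. The latter holds by Proposition~\ref{lem:FLEisweaklyleft}, completing the left-handed statement. In effect, left abundance supplies the ``finitely left equated'' half for free, which is why no separate left analogue of Proposition~\ref{lem:FLEisFRE} is required.

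I expect no genuine obstacle at the level of this theorem itself: all of the substantive work already lives in the three propositions established above via normal forms and the tree/colouring analysis. The only point requiring care is exactly the asymmetry just flagged---one must resist the temptation to obtain finite \emph{right} equatedness cheaply from an abundance argument, since $\FLAd(X)$ is only a one-sided Ehresmann monoid and need not be right abundant; this is precisely why Proposition~\ref{lem:FLEisFRE} had to be proved by a direct normal-form computation rather than quoted from Proposition~\ref{prop:rabundweak}.
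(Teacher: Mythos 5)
Your proposal is correct and follows essentially the same route as the paper: weak left coherence from left adequacy (hence left abundance) via the dual of Proposition~\ref{prop:rabundweak} together with Proposition~\ref{lem:FLEisweaklyleft}, and weak right coherence by combining Propositions~\ref{lem:FLEisFRE} and~\ref{lem:FLEisweaklyright} through Theorem~\ref{thm:weakcoherent}. Your remark on the asymmetry---that finite right equatedness cannot be obtained from an abundance argument and genuinely requires the normal-form computation---accurately reflects why the paper structures the propositions as it does.
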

\begin{proof}
Since $\FLAd(X)$ is the free object on $X$ in the quasi-variety of left adequate monoids, it is in particular left adequate, and hence also left abundant. By Proposition~\ref{prop:rabundweak}, $\FLAd(X)$ is finitely left equated. Proposition ~\ref{lem:FLEisweaklyleft} now gives that $\FLAd(X)$ is  weakly left coherent. The result for weak right coherence follows similarly from Propositions~\ref{lem:FLEisFRE} and ~\ref{lem:FLEisweaklyright}.\end{proof}

\section{An embedding of the free Ehresmann monoid}
\label{sec:embed}
In this section we show that the free Ehresmann monoid $\FAd(X)$ embeds into an inverse monoid of the form $Y\rtimes G$, where $Y$ is a semilattice and $G$ is a group acting by automorphisms.  Subsequently,  Corollary \ref{cor:pi} and its left-right dual can also be applied to show that $\FAd(X)$ is neither left nor right coherent. Since $\FLAd(X)$ is a submonoid of $\FAd(X)$, this also gives an alternative proof that $\FLAd(X)$ is not left coherent. We begin with an important remark. Since $\FAd(X)$ is not ample, it cannot embed into a monoid of the form $Y\rtimes G$ (nor, indeed, into any inverse or ample monoid) in a way that preserves the two unary operations ($^+$ and $^\ast$)  since this would force the ample identities to hold in $\FAd(X)$. In other words, $\FAd(X)$ does not embed into  any $Y\rtimes G$ as a bi-unary monoid. Similarly, $\FLAd(X)$ does not embed into any $Y\rtimes G$ as a unary monoid since $\FLAd(X)$ does not satisfy the left ample identity. It is perhaps surprising that $\FAd(X)$ embeds into some $Y\rtimes G$ as a monoid.  Our approach makes use of the fact that $\FAd(X)$ is isomorphic to a particular quotient of the monoid free product of the free monoid generated by $X$ with the semilattice of idempotents $\FAd(X)$.

\begin{definition}(Left and right actions on the semilattice of idempotents)
Let $E(X)$ denote the semilattice of idempotents of $\FAd(X)$. Note that $E(X)$ acts (by multiplication) on the left and right of $E(X)$. Viewing $E(X)$ and $X^*$ as submonoids of $\FAd(X)$ in the obvious way, we may also define left and right actions of $X^*$ on $E(X)$ by utilising the operations of $\FAd(X)$: for a word $w \in X^*$ and an idempotent $f \in E(X)$ we define $w \cdot f = (wf)^+$ and $f \circ w = (fw)^*$.
\end{definition}

\begin{definition}(Construction of Branco, Gomes and Gould)
Now viewing $X^* \cup E(X)$ as a union of two  disjoint monoids (which we call parts for short) the semigroup free product of $X^*$ and $E(X)$ is the set
$$X^*\ast E(X):= \{ a_1 \cdots a_m: m \geq 1 \mbox{ and } \forall i,  a_i \in X^* \cup E(X)  \mbox{ and }  a_i, a_{i+1} \mbox{not in the same part} \},$$
where the product of elements $u=a_1 \cdots a_m$ and $v=b_1 \cdots b_k$ is given by $a_1 \cdots a_mb_1 \cdots b_k$ if $a_m$ and $b_1$ are in different parts, or by $a_1 \cdots a_{m-1}cb_2 \cdots b_k$, where $c=a_mb_1$ if $a_m$ and $b_k$ are in the same part. The (left and right) actions of $X^*$ and $E(X)$ on $E(X)$ extend to give left and right) actions of $X^* \ast E(X)$ on $E(X)$, also denoted by $\cdot$ and $\circ$. Let $1_{X^*}$ and $1_{E(X)}$ denote the identity elements of $X^*$ and $E(X)$ respectively, and for each $u \in X^* \ast E(X)$ write $u^+ = u \cdot 1_{E(X)}$ and $u^* = 1_{E(X)} \circ u$.  By \cite[Theorem 6.1]{BGG} the free Ehresmann monoid is then isomorphic to the quotient
$$P(X^*, E(X)):= (X^* \ast E(X))/ 
H^\sharp,$$
where  
$H:=\{(u^+u,u), (uu^*,u): u \in X^* \ast E(X)\} \cup \{(1_X^*, 1_{E(X)})\}$.
\end{definition}

\begin{remark}(The free Ehresmann monoid via a monoid free product)
\label{rem:quotient}
The monoid free product of $X^*$ and $E(X)$ is by definition the quotient $C_X:=(X^* \ast E(X))/\{(1_X^*, 1_{E(X)})\}^\sharp$.  For each $u \in X^* \ast E(X)$, we write $[u]_C$ to denote the corresponding element of $C_X$. Let $\mathcal{E}_X = E(X) \setminus \{1_{E(X)}\}$.  Writing
\begin{eqnarray*}
C_{t,t}&:=&\{ a_1 a_2 \cdots a_{2m+1}: m \geq 0, a_{i} \in X^+ \mbox{ if $i$ is odd, } a_{i} \in \mathcal{E}_X \mbox{ if $i$  is even}  \}\\
C_{t,f}&:=&\{ a_1 a_2 \cdots a_{2m}: m \geq 1, a_{i} \in X^+ \mbox{ if $i$  is odd, } a_{i} \in \mathcal{E}_X \mbox{ if $i$  is even}\}\\
C_{f,t}&:=&\{ a_1 a_2 \cdots a_{2m}: m \geq 1, a_{i} \in \mathcal{E}_X \mbox{ if $i$  is odd, } a_{i} \in X^+ \mbox{ if $i$  is even}\}\\
C_{f,f}&:=&\{ a_1 a_2 \cdots a_{2m+1}: m \geq 0,a_{i} \in \mathcal{E}_X \mbox{ if $i$  is odd, } a_{i} \in X^+ \mbox{ if $i$  is even}\},
\end{eqnarray*}
we see that $\{1_{E(X)}\} \cup C_{t,t} \cup C_{t,f} \cup C_{f,t} \cup C_{f,f}$ is a transversal of the congruence classes. For each element $\gamma \in C_X$ there is a unique word $t \in X^*$ formed by omitting all interior factors in $E(X) \cup \{1_{X^*}\}$ from any alternating product $u \in X^* \ast E(X)$ with $\gamma = [u]_C$; we shall call this the trunk word of $\gamma$ and we will denote the empty trunk by $1$.  Note that we may also define unary operations on $C_X$ as follows:
$$[u]_C^+ = [u \cdot 1_{E(X)}]_C \mbox{ and }[u]_C^* = [1_{E(X)} \circ u]_C,$$
and by construction both  $[u]_C^+$ and $[u]_C^*$ have trunk word $1$. The free Ehresmann monoid is then isomorphic to the quotient $C_X/ 
J^\sharp,$
where  
$$J:=\{(\gamma^+\gamma,\gamma), (\gamma\gamma^*,\gamma): \gamma \in C_X\}.$$
For $\gamma \in C_X$ we write $[\gamma]_F$ for the corresponding congruence class. 
\end{remark}

We shall need some technicalities to introduce the semilattice $Y_X$ we use to exhibit our embedding. To this end, we next define a morphism $\theta$ from the monoid free product $C_X$ to an auxiliary semidirect product $Z_X \rtimes F_X$, where $F_X$ is the free group on $X$ and $Z_X$ is a semilattice; ultimately, our intended semilattice $Y_X$ will be defined as a quotient of $Z_X$ determined by $\theta$. We begin with the definition of $Z_X$ and a technical lemma required to define the morphism $\theta$. 

\begin{definition}(The semilattice $Z_X$ and elements  $\tau$)
Let $S_X$ denote the free (monoid) semilattice generated by $y_{x,h}$ and $e_{f,h}$ where $x \in X$, $f \in \mathcal{E}_X$, $h \in F_X$ and let $Z_X$ denote the quotient $S_X/Q^\sharp$ where $Q=\{(e_{f',h}e_{f'',h}, e_{f'f'',h}): f',f'' \in \mathcal{E}_X, h \in F_X\}$. For a word $w = x_1 x_2\cdots x_n \in X^+$ and $h \in F_X$, write:
$$\tau_{w,h} := y_{x_1, h}y_{x_2, hx_{1}} \cdots y_{x_n, hx_{1}x_2 \cdots x_{n-1}} \in Z_X.$$
\end{definition}

\begin{lemma}
\label{lem:tauprod}
Let $w, v \in X^+$ and $h \in F_X$.
Then $\tau_{v,h}$ is a factor of $\tau_{w,1}$ in $Z_X$ if and only if $h \in X^*$ and there exists $u \in X^*$ such that $w = hvu$.  Moreover, if $h=1$ then  $\tau_{w, 1} = \tau_{v, 1}\tau_{u, v}$, whilst if $h \in X^+$ then $\tau_{w, 1} = \tau_{h,1} \tau_{v, h}\tau_{u, hv}$.
\end{lemma}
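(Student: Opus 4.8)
The plan is to exploit the very simple structure of $Z_X$: since the relations in $Q$ involve only the $e$-generators (and only those sharing a second index), the $y_{x,h}$ generate a free subsemilattice of $Z_X$, and no product of $y$-generators is ever $Q^\sharp$-equal to one involving an $e$-generator. As $\tau_{w,1}$ and $\tau_{v,h}$ are products of $y$-generators only, I would first reduce the divisibility question to a purely set-theoretic one. Writing $\sigma(\tau_{w,h})$ for the set of $y$-generators occurring in $\tau_{w,h}$, namely
\[
\sigma(\tau_{w,h}) = \{\, y_{x_i,\,h x_1 \cdots x_{i-1}} : 1 \le i \le n \,\} \quad (w = x_1\cdots x_n),
\]
I would show that in the semilattice $Z_X$ one element is a factor of another exactly when its generator set is contained in the other's; in particular $\tau_{v,h}$ is a factor of $\tau_{w,1}$ if and only if $\sigma(\tau_{v,h}) \subseteq \sigma(\tau_{w,1})$. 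The point is that any witness $z$ to divisibility can involve no $e$-generator (one in $z$ would survive into the product, since unions only grow, whereas $\tau_{w,1}$ contains none), so the product reduces to a union of $y$-generator sets.

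The second ingredient is a word-combinatorial observation: the second indices occurring in $\sigma(\tau_{w,1})$ are exactly the proper prefixes $x_1\cdots x_{i-1}$ of $w$, all of which lie in $X^*$, and two positive words equal in $F_X$ are equal as words (no cancellation). For the reverse implication, assuming $h \in X^*$ and $w = hvu$ I would split the $n$ generators of $\tau_{w,1}$ according to the three ranges of the index $i$ cut out by $w = hvu$, and check directly that these blocks are precisely $\sigma(\tau_{h,1})$, $\sigma(\tau_{v,h})$ and $\sigma(\tau_{u,hv})$; since the product in $Z_X$ is the union of generator sets, this simultaneously gives the containment and the product formula $\tau_{w,1} = \tau_{h,1}\tau_{v,h}\tau_{u,hv}$, with empty factors (when $h=1$ or $u=1$) read as the identity, yielding the special case $\tau_{w,1} = \tau_{v,1}\tau_{u,v}$ when $h=1$.

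For the forward implication I would argue by induction on $j$, starting from the $j=1$ term $y_{z_1,h}$ of $\tau_{v,h}$ (where $v = z_1\cdots z_m$). Its membership in $\sigma(\tau_{w,1})$ forces $h$ to equal some prefix $x_1\cdots x_{i-1}$ in $F_X$; as the right-hand side is positive this already gives $h \in X^*$, say $h = x_1\cdots x_p$. Inductively, once $z_1\cdots z_{j-1} = x_{p+1}\cdots x_{p+j-1}$ is known, the second index $h z_1\cdots z_{j-1}$ of the $j$-th generator is the positive word $x_1\cdots x_{p+j-1}$, so its matching generator in $\sigma(\tau_{w,1})$ must be the one with index $i = p+j$, forcing $z_j = x_{p+j}$. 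This propagates to $v = x_{p+1}\cdots x_{p+m}$, whence $w = h v u$ with $u = x_{p+m+1}\cdots x_n \in X^*$. The main obstacle, and the step deserving most care, is precisely this matching argument: it relies on first pinning down that $h$ is positive and then repeatedly using the no-cancellation property to convert equalities in $F_X$ into equalities of words, so that each $y$-generator of $\tau_{v,h}$ is identified with a unique $y$-generator of $\tau_{w,1}$ in the correct position. Establishing cleanly that $Z_X$ has the expected normal form, which justifies the reduction in the first paragraph, is the other point requiring attention.
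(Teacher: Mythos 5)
Your proposal is correct and follows essentially the same route as the paper: the paper likewise identifies the factors of $\tau_{w,1}$ with the sub-products $\Pi_{i\in I}y_{x_i,p_{i-1}}$ over subsets $I\subseteq\{1,\ldots,j\}$ and then compares the subscripts of the $y$-generators to force $I$ to be an interval, $h$ to be the corresponding prefix of $w$, and $v$ to be the matching factor. Your additional care over the normal form of $Z_X$ (no $e$-generator can occur in a witness) and the inductive matching of generators merely fills in details the paper states without elaboration.
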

\begin{proof}
Let $w=x_{1}\cdots x_j$ and $v = x'_1\cdots x'_k $  where $x_i, x'_\ell\in X$ for $1\leq i\leq j$ and $1\leq \ell\leq k$. Let  $p_i$   denote  the length $i$ 
prefix of $w$ for $0\leq i\leq j-1$, and let  $p_\ell'$ denote the length $\ell$ prefix of $v$ for $0\leq \ell\leq k-1$. Then 
$$\tau_{w, 1} = y_{x_1, p_0} y_{x_2, p_1} \cdots y_{x_j, p_{j-1}}
\mbox{ and }\tau_{v, h} = y_{x_1', hp_0'} y_{x_2', hp_1'} \cdots y_{x_k', hp_{k-1}'}.$$
A factor $z$ of $\tau_{w,1}$ is then precisely a product $z= \Pi_{i \in I} y_{x_i, p_{i-1}}$ for some $I \subseteq\{1, \ldots, j\}$. Thus  we see that $z=\tau_{v,h}$ is a factor of $\tau_{w,1}$ if and only if for some such $I$ we have 
 \[\Pi_{\ell=1}^{k} y_{x'_\ell, hp'_{\ell-1}} = \Pi_{i \in I} y_{x_i, p_{i-1}}.\]
 By comparing subscripts of the $y$'s it follows that  this can only happen if $I = \{i, i+1, \ldots, i+k-1 \}$, $v = x_i\cdots x_{i+k-1}$ and $h=p_{i-1} \in X^*$.  That is, $\tau_{v,h}$ is a factor of $\tau_{w,1}$ if and only if $h$ is a proper prefix of $w \in X^+$ and there exists $u \in X^*$ such that $w = hvu$. If $h=1$ it is straightforward to verify that $\tau_{w,1}= \tau_{v,1}\tau_{u,v}$, and similarly if $h\in X^+$ one finds that $\tau_{w, 1} = \tau_{h,1} \tau_{v, h}\tau_{u, hv}$.
\end{proof}

\begin{definition}(The semidirect product $Z_X \rtimes F_X$)
The free group $F_X$ acts on the left of $S_X$ by morphisms as follows: $g \cdot y_{x,h} = y_{x,gh}$ and $g \cdot e_{f,h} = e_{f,gh}$. In particular, $g \cdot \tau_{w,h} = \tau_{w,gh}$ for all $w \in X^+$ and all $g,h \in F_X$. For each relation $(e_{f',h}e_{f'',h},e_{f'f'',h}) \in Q$ we also have $(g\cdot e_{f',h} g\cdot e_{f'',h}, g\cdot e_{f'f'',h}) \in Q$, and so we then have an induced action of $F_X$ by monoid morphisms on $Z_X$.  Thus $Z_X \rtimes F_X$ is an inverse monoid with identity element $(1_{Z_X}, 1)$.    
\end{definition}

Observe that for any $w,w'\in X^+$ we have
\[(\tau_{w,1},w)(\tau_{w',1},w')=(\tau_{w,1}\, w\cdot \tau_{w'1},ww')=(\tau_{w,1}\tau_{w',w},ww')=(\tau_{ww',1}, ww'),\]
and for any $f,f'\in \mathcal{E}_X$
\[(e_{f,1},1)(e_{f',1},1)=(e_{f,1}e_{f',1},1)
=(e_{ff',1},1).\]
It follows (with adjustment for the identity) that we have monoid morphisms from $X^*=X^+\cup 1_{X^*}$ and 
$E(X)=\mathcal{E}_X\cup 1_{E(X)}$ to $Z_X \rtimes F_X$. Using the properties of the monoid free product, we may now define a morphism from $C_X$ to $Z_X \rtimes F_X$.

\begin{definition}(The morphism $\theta$)
 The morphism $\theta: C_X \rightarrow Z_X \rtimes F_X$ is  determined by extending the mapping:
\begin{eqnarray*}
[1_{E(X)}]_C& \mapsto &  (1_{Z_X}, 1),\\ \
[w]_C & \mapsto & (\tau_{w,1}, w) \mbox{ for all } w \in X^+,\\ \
[f]_C& \mapsto & (e_{f,1}, 1) \mbox{ for all } f \in \mathcal{E}_X.\end{eqnarray*}
\end{definition}

\begin{lemma}
Let $t_i \in X^+$ and $f_i \in \mathcal{E}_X$ for $1 \leq i \leq k$. Then 
\begin{enumerate}
    \item $[t_1f_1\cdots t_kf_k]_C\theta = (\tau_{t,1}\Pi_{i=1}^ke_{f_i,t_1\cdots t_{i}}, t),$
where $t=t_1\cdots t_k$.
\item $[t_1f_1\cdots t_kf_kt_{k+1}]_C\theta = (\tau_{t',1}\Pi_{i=1}^ke_{f_i,t_1\cdots t_{i}}, t'),$
where $t'=t_1\cdots t_{k+1}$.
\item $[f_0t_1f_1\cdots t_kf_kt_{k+1}]_C\theta = (\tau_{t',1}\Pi_{i=0}^ke_{f_i,t_1\cdots t_{i}}, t'),$
where $t'=t_1\cdots t_{k+1}$.
\item $[f_0t_1f_1\cdots t_kf_k]_C\theta = (\tau_{t,1}\Pi_{i=0}^ke_{f_i,t_1\cdots t_{i}}, t),$
where $t=t_1\cdots t_{k}$.
\end{enumerate}
\end{lemma}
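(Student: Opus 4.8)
The plan is to use that $\theta$ is a monoid morphism, so each bracketed element may be evaluated by multiplying out the images of its generating factors in $Z_X \rtimes F_X$, using the semidirect product rule $(y,s)(z,t) = (y\,(s\cdot z), st)$ together with the action $g\cdot \tau_{w,h} = \tau_{w,gh}$ and $g \cdot e_{f,h} = e_{f,gh}$. The crucial bookkeeping observation is that in such a product the \emph{accumulated} first coordinate is never re-acted upon: only the newly introduced generator's semilattice component is shifted, and it is shifted precisely by the current group coordinate, which at each stage equals the accumulated trunk word. I would prove (1) and (2) simultaneously by induction on $k$, and then obtain (3) and (4) by a single left multiplication.

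For the base and inductive steps of (1) and (2), first note that $[t_1 f_1]_C\theta = (\tau_{t_1,1},t_1)(e_{f_1,1},1) = (\tau_{t_1,1}\,e_{f_1,t_1}, t_1)$, using $t_1 \cdot e_{f_1,1} = e_{f_1,t_1}$. To pass from (1) at level $k$ to (2) at level $k$, I would right multiply by $[t_{k+1}]_C\theta = (\tau_{t_{k+1},1}, t_{k+1})$: the current group coordinate $t = t_1\cdots t_k$ sends $\tau_{t_{k+1},1}$ to $\tau_{t_{k+1},t}$, and the merging identity $\tau_{t,1}\,\tau_{t_{k+1},t} = \tau_{tt_{k+1},1}$ (the $h=1$ case of Lemma~\ref{lem:tauprod}, already recorded in the computation preceding the definition of $\theta$) collapses the trunk factors to $\tau_{t',1}$ with $t' = t_1\cdots t_{k+1}$, leaving the idempotent factors untouched. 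To pass from (2) at level $k$ to (1) at level $k+1$, I would right multiply by $[f_{k+1}]_C\theta = (e_{f_{k+1},1},1)$: now the group coordinate is $t'$, so this contributes exactly the factor $t'\cdot e_{f_{k+1},1} = e_{f_{k+1},t_1\cdots t_{k+1}}$, extending $\prod_{i=1}^{k}$ to $\prod_{i=1}^{k+1}$ as required.

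Formulas (3) and (4) then follow by left multiplying the expressions in (1) and (2) by $[f_0]_C\theta = (e_{f_0,1},1)$: since $1\cdot \tau = \tau$ for any $\tau \in Z_X$, we obtain $(e_{f_0,1},1)\,(\tau_{t,1}\prod_{i=1}^k e_{f_i,t_1\cdots t_i}, t) = (e_{f_0,1}\,\tau_{t,1}\prod_{i=1}^k e_{f_i,t_1\cdots t_i}, t)$, and interpreting the empty trunk product $t_1\cdots t_0$ as $1 \in F_X$ we have $e_{f_0,1} = e_{f_0, t_1\cdots t_0}$, so the index set of the product extends to begin at $i=0$.

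I expect no serious obstacle here: the argument is a routine induction, and the only place demanding care is the consistent tracking of the second subscripts $h$ of the symbols $\tau_{w,h}$ and $e_{f,h}$ as the group coordinate accumulates, together with the correct invocation of the factorisation $\tau_{tt_{k+1},1} = \tau_{t,1}\,\tau_{t_{k+1},t}$ from Lemma~\ref{lem:tauprod} to keep the trunk part in the normalised form $\tau_{t',1}$.
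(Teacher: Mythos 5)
Your proposal is correct and follows essentially the same route as the paper: a direct induction using that $\theta$ is a morphism, the semidirect product rule, and the factorisation $\tau_{tt_{k+1},1}=\tau_{t,1}\tau_{t_{k+1},t}$ from Lemma~\ref{lem:tauprod}, with (3) and (4) obtained by left multiplication by $(e_{f_0,1},1)$. The only cosmetic difference is that you interleave (1) and (2) in a simultaneous induction (multiplying by $t_{k+1}$ and then $f_{k+1}$ separately), whereas the paper proves (1) by multiplying by the whole block $[t_{k+1}f_{k+1}]_C\theta$ at once and then derives (2); the computations are identical.
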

\begin{proof}
(1) For $k= 1$ we have
$$[u]_C\theta = (\tau_{t_1,1}, t_1)(e_{f_1,1}, 1) = (\tau_{t_1,1}e_{f_1,t_1}, t_1),$$
as required. Suppose for some $k \geq 1$ we have $[t_1f_1\cdots t_kf_k]_C\theta =
(\tau_{t,1}\Pi_{i=1}^ke_{f_i,t_1\cdots t_{i}}, t)$, where $t=t_1 \cdots t_k$. Then since $\theta$ is a morphism
\begin{eqnarray*}
[t_1f_1\cdots t_kf_kt_{k+1}f_{k+1}]_C\theta &=&  [t_1f_1\cdots t_kf_k]_C\theta [t_{k+1}f_{k+1}]_C\theta\\ &=&
(\tau_{t,1}\Pi_{i=1}^ke_{f_i,t_1\cdots t_{i}}, t)(\tau_{t_{k+1},1}e_{f_{k+1},t_{k+1}}, t_{k+1})\\ &=&(\tau_{t,1}\Pi_{i=1}^ke_{f_i,t_1\cdots t_{i}} \tau_{t_{k+1}, t} e_{f_{k+1},tt_{k+1}}, t t_{k+1})\\
&=& (\tau_{t',1} \Pi_{i=1}^{k+1}e_{f_i,t_1\cdots t_{i}}, t'),
\end{eqnarray*}
where $t'=t_1\cdots t_{k+1}$, and the equality in the final line follows from the previous lemma.

(2) Using the fact that $\theta$ is a morphism together with part (1) and the previous lemma gives
\begin{eqnarray*}
[t_1f_1\cdots t_kf_kt_{k+1}]_C\theta &=&[t_1f_1\cdots t_kf_k]_C\theta [t_{k+1}]_C\theta = (\tau_{t,1}\Pi_{i=1}^ke_{f_i,t_1\cdots t_{i}}, t)(\tau_{t_{k+1}, 1},t_{k+1})\\
&=&(\tau_{tt_{k+1},1}\Pi_{i=1}^ke_{f_i,t_1\cdots t_{i}}, tt_{k+1}) = (\tau_{t',1}\Pi_{i=1}^ke_{f_i,t_1\cdots t_{i}}, t'),
\end{eqnarray*}
where $t'=t_1\cdots t_{k+1}$.

(3) Using the fact that $\theta$ is a morphism together with part (2) gives
\begin{eqnarray*}
[f_0t_1f_1\cdots t_kf_kt_{k+1}]_C\theta &=&[f_0]_C\theta[t_1f_1\cdots t_kf_kt_{k+1}]_C\theta = (e_{f_0, 1},1)(\tau_{t',1}\Pi_{i=1}^ke_{f_i,t_1\cdots t_{i}}, t')\\
&=& (\tau_{t',1}\Pi_{i=0}^ke_{f_i,t_1\cdots t_{i}}, t')
\end{eqnarray*}
where again $t'=t_1\cdots t_{k+1}$.

(4) Similarly, using the fact that $\theta$ is a morphism together with part (1) gives
\begin{eqnarray*}
[f_0t_1f_1\cdots t_kf_k]_C\theta &=&[f_0]_C\theta[t_1f_1\cdots t_kf_k]_C\theta = (e_{f_0, 1},1)(\tau_{t,1}\Pi_{i=1}^ke_{f_i,t_1\cdots t_{i}}, t)\\
&=& (\tau_{t,1}\Pi_{i=0}^ke_{f_i,t_1\cdots t_{i}}, t)
\end{eqnarray*}
\end{proof}

\begin{remark}\label{rem:split}
For $[u]_C \in C_X$ write $[u]_C\theta = ([u]_C\theta_1, [u]_C\theta_2)$. By the previous lemma we have, in all cases, that $[u]_C\theta_2$ is the trunk word of $[u]_C$, and moreover $\theta_2$ is the morphism from $C_X$ to $F_X$ that maps each element to its trunk word. The function $\theta_1$ is {\em not} a morphism however: for example, if $t_1, t_2 \in X^+$ then $[t_1t_2]_C\theta_1 = \tau_{t_1t_2,1}$ and $[t_1]_C\theta_1[t_2]_C\theta_1 = \tau_{t_1,1} \tau_{t_2,1}$, and by Lemma \ref{lem:tauprod} $\tau_{t_2, 1}$ is not a factor of $\tau_{t_1t_2,1}$.
\end{remark}

\begin{definition}\label{defn:positions}
Let $\gamma \in C_X$ have trunk word $t$, let $p$ be a prefix of $t$ and let $f \in \mathcal{E}_X$. We say that $f$ left-splits $\gamma$ at $p$ if there exists a factorisation $\gamma = \gamma' [f]_C \gamma''$ in $C_X$ where $\gamma' = [u']_C$ for some $u' \in C_{t,t} \cup C_{f,t} \cup \{1_{X^*}\}$ with trunk word $p$, and $\gamma'' = [u'']_C$ for some $u'' \in C_{t,t} \cup C_{t,f} \cup \{1_{X^*}\}$. Note that if such a factorisation exists, the idempotent $f \in \mathcal{E}_X$ is unique. We write $P(\gamma)$ for the set of prefixes $p$ of $t$ for which there exists an idempotent  that left-splits $\gamma$ at $p$, and we write $l(p, \gamma)$ to denote this unique idempotent. 
\end{definition}

The next remark follows from Remark~\ref{rem:split} and Definition~\ref{defn:positions}.

\begin{remark}\label{rem:theta_1}  For $\gamma\in C_X$ we have
\[\gamma \theta_1 = \tau_{c,1} \Pi_{p \in P(\gamma)} e_{l(p,\gamma),p}.\]\end{remark}

\begin{lemma}\label{lem:thetaprod}
Let $\gamma, \delta \in C_X$. Then
$$(\gamma\delta)\theta_1 = \gamma\theta_1 \, (\gamma\theta_2 \cdot (\delta\theta_1)).$$
Consequently, $(\gamma^+\gamma)\theta_1=\gamma^+\theta_1\, \gamma\theta_1$.
\end{lemma}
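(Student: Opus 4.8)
The plan is to derive the identity directly from the fact that $\theta$ is a monoid morphism together with the explicit multiplication rule of the semidirect product $Z_X\rtimes F_X$, so that essentially no computation is required beyond unpacking definitions.

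First I would write $\gamma\theta=(\gamma\theta_1,\gamma\theta_2)$ and $\delta\theta=(\delta\theta_1,\delta\theta_2)$, as in Remark~\ref{rem:split}. Since $\theta$ is a morphism we have $(\gamma\delta)\theta=(\gamma\theta)(\delta\theta)$, and applying the product rule $(y,g)(z,h)=(y\,(g\cdot z),\,gh)$ of the semidirect product gives
$$(\gamma\delta)\theta=(\gamma\theta_1,\gamma\theta_2)(\delta\theta_1,\delta\theta_2)=\bigl(\gamma\theta_1\,(\gamma\theta_2\cdot\delta\theta_1),\ \gamma\theta_2\,\delta\theta_2\bigr).$$
Comparing first coordinates yields the first assertion $(\gamma\delta)\theta_1=\gamma\theta_1\,(\gamma\theta_2\cdot(\delta\theta_1))$ at once (the second coordinate incidentally re-confirms that $\theta_2$ is multiplicative, sending $\gamma$ to its trunk word).

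For the consequence I would specialise this to the product $\gamma^+\gamma$, obtaining $(\gamma^+\gamma)\theta_1=\gamma^+\theta_1\,(\gamma^+\theta_2\cdot\gamma\theta_1)$. The only substantive point is to observe that the group component $\gamma^+\theta_2$ is trivial: by Remark~\ref{rem:split} the map $\theta_2$ sends each element of $C_X$ to its trunk word, and by Remark~\ref{rem:quotient} the element $\gamma^+$ has trunk word $1$, so $\gamma^+\theta_2=1_{F_X}$. As $F_X$ acts on $Z_X$ by monoid morphisms, its identity element acts trivially, whence $\gamma^+\theta_2\cdot\gamma\theta_1=\gamma\theta_1$ and therefore $(\gamma^+\gamma)\theta_1=\gamma^+\theta_1\,\gamma\theta_1$, as required.

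I do not expect any genuine obstacle here: the entire content lies in correctly reading off the first coordinate of the semidirect-product multiplication, and in recalling that the operation $^+$ produces an element with empty trunk, so that its $\theta_2$-image is the group identity and the twisting by the action disappears.
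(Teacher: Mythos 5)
Your proof is correct, and it takes a genuinely shorter route than the paper's. You simply invoke the fact that $\theta:C_X\to Z_X\rtimes F_X$ is a monoid morphism (which the paper has already established via the universal property of the monoid free product, after checking the generator maps respect multiplication) and read off the first coordinate of the semidirect-product multiplication $(y,g)(z,h)=(y\,(g\cdot z),gh)$; the identity then falls out with no computation. The paper instead expands both sides using the explicit description $\gamma\theta_1=\tau_{t,1}\Pi_{p\in P(\gamma)}e_{l(p,\gamma),p}$ from Remark~\ref{rem:theta_1}, verifies $P(\gamma\delta)=P(\gamma)\cup cP(\delta)$ with $l(p,\delta)$ matching $l(cp,\gamma\delta)$ (up to the relations in $Q$ when positions coincide), and applies Lemma~\ref{lem:tauprod} to split $\tau_{cd,1}$. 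The computational route records along the way how the position set $P(\gamma\delta)$ and its labels decompose, which is information the authors reuse in the proof of Lemma~\ref{lem:theta_1}; your argument buys brevity and avoids the slightly delicate bookkeeping at coinciding positions, at the cost of not producing that auxiliary description. Your handling of the final statement --- $\gamma^+$ has empty trunk, so $\gamma^+\theta_2=1_{F_X}$ acts trivially --- is exactly the paper's.
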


\begin{proof}
Let $c$ be the trunk of $\gamma$ and $d$ the trunk of $\delta$. In view of Remark~\ref{rem:theta_1},  we have
\begin{eqnarray*}
\gamma \theta_1 &=& \tau_{c,1} \Pi_{p \in P(\gamma)} e_{l(p,\gamma),p}\\
\delta \theta_1 &=& \tau_{d,1} \Pi_{p \in P(\delta)} e_{l(p,\delta),p}\\
(\gamma\delta) \theta_1 &=& \tau_{cd,1} \Pi_{p \in P(\gamma\delta)} e_{l(p,\gamma\delta),p}.
\end{eqnarray*}
Note that $P(\gamma\delta) = P(\gamma) \cup cP(\delta)$ and  for all $p \in P(\delta)$ we have $l(p, \delta) = l(cp, \gamma\delta)$. Together with Lemma \ref{lem:tauprod} and commutativity in $Z_X$ this gives:
\begin{eqnarray*}
(\gamma\delta) \theta_1 &=& \tau_{cd,1} \Pi_{p \in P(\gamma\delta)} e_{l(p,\gamma\delta),p} \\
&=& \tau_{c,1}\tau_{d,c} \Pi_{p \in P(\gamma)} e_{l(p,\gamma),p}\Pi_{p \in P(\delta)} e_{l(p,\delta),cp}\\
&=& (\tau_{c,1} \Pi_{p \in P(\gamma)} e_{l(p,\gamma),p})\,( c\cdot (\tau_{d,1} \Pi_{p \in P(\delta)} e_{l(p,\delta),p}))\\
&=&\gamma\theta_1 \, (\gamma\theta_2 \cdot (\delta\theta_1)).
\end{eqnarray*}

The final statement follows from the fact that $\gamma^+\theta_2=1$.
\end{proof}

In what follows, we aim to determine a set of relations $R$ to be imposed on $Z_X$, giving a quotient $Y_X:=Z_X/R^\sharp$ with the property that the morphism $\theta: C_X \rightarrow Z_X \rtimes F_X$ induces a morphism $\hat\theta:C_X/J^\sharp \rightarrow Y_X \rtimes F_X.$ 

\begin{definition}(The semilattice $Y_X$)  Define $Y_X$ to be the quotient $Z_X/R^\sharp$ where $R=R_1\cup R_2$ and
\begin{eqnarray*} R_1&:=&\{(h \cdot (\gamma^+\theta_1\gamma\theta_1), \,\, h\cdot( \gamma\theta_1)): \gamma \in C_X, h \in F_X\}\\ 
R_2&:=&\{(h \cdot (\gamma\theta_1 (\gamma\theta_2 \cdot \gamma^*\theta_1)), h \cdot (\gamma\theta_1) ): \gamma \in C_X, h \in F_X\}.
\end{eqnarray*}
For $z \in Z_X$ we write $[z]_Y$ for the corresponding congruence class. 
\end{definition}

Note that from the form of the relations in $R$, the action of $F_X$ on $Z_X$ induces an action of $F_X$ on $Y_X$.

\begin{proposition}
For all $\gamma \in C_X$, let 
$$\gamma\hat\theta = ([\gamma\theta_1]_Y, \gamma\theta_2) \in Y_X \rtimes F_X.$$
Then $\hat\theta$ induces a monoid morphism $\hat\theta: \FAd(X) \rightarrow Y_X \rtimes F_X$.
\end{proposition}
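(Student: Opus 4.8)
The plan is to realise $\hat\theta$ as a factorisation through $J^\sharp$ of a genuine monoid morphism $C_X \to Y_X \rtimes F_X$, using the identification $\FAd(X) \cong C_X/J^\sharp$ from Remark~\ref{rem:quotient}. First I would observe that, since $R = R_1 \cup R_2$ is by construction closed under the action of $F_X$ (each relation is listed together with all its $F_X$-translates), the congruence $R^\sharp$ is $F_X$-invariant, so the action of $F_X$ on $Z_X$ descends to $Y_X$, as already noted before the proposition. Consequently the natural surjection $q\colon Z_X \rtimes F_X \to Y_X \rtimes F_X$, $(z,g) \mapsto ([z]_Y, g)$, is a monoid morphism: one checks directly that it respects the twisted multiplication, the only subtlety being that $g\cdot[z']_Y = [g\cdot z']_Y$ is well defined. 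Composing, $\bar\theta := \theta q\colon C_X \to Y_X \rtimes F_X$ is a monoid morphism with $\gamma\bar\theta = ([\gamma\theta_1]_Y, \gamma\theta_2)$, which is exactly the formula defining $\hat\theta$ on representatives.

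Next I would show that $\bar\theta$ is constant on the $J$-classes, i.e.\ that it identifies each generating pair of $J$. Since $J^\sharp$ is the smallest congruence containing $J$ and $\ker\bar\theta$ is a congruence, it suffices to verify $(\gamma^+\gamma)\bar\theta = \gamma\bar\theta$ and $(\gamma\gamma^*)\bar\theta = \gamma\bar\theta$ for all $\gamma \in C_X$. For the second coordinate, $\theta_2$ is the morphism sending each element to its trunk word (Remark~\ref{rem:split}), and since $\gamma^+$ and $\gamma^*$ have trivial trunk word we get $(\gamma^+\gamma)\theta_2 = \gamma\theta_2 = (\gamma\gamma^*)\theta_2$. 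For the first coordinate I would invoke Lemma~\ref{lem:thetaprod}: its final clause gives $(\gamma^+\gamma)\theta_1 = \gamma^+\theta_1\,\gamma\theta_1$, while applying the displayed product formula to the factors $\gamma$ and $\gamma^*$ gives $(\gamma\gamma^*)\theta_1 = \gamma\theta_1\,(\gamma\theta_2 \cdot \gamma^*\theta_1)$.

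It then remains to note that these two first-coordinate values become equal to $\gamma\theta_1$ in $Y_X$: the pair $(\gamma^+\theta_1\gamma\theta_1,\, \gamma\theta_1)$ is precisely the $h=1$ member of $R_1$, and the pair $(\gamma\theta_1(\gamma\theta_2\cdot\gamma^*\theta_1),\, \gamma\theta_1)$ is precisely the $h=1$ member of $R_2$; hence $[(\gamma^+\gamma)\theta_1]_Y = [\gamma\theta_1]_Y = [(\gamma\gamma^*)\theta_1]_Y$. Combining the two coordinates yields $(\gamma^+\gamma)\bar\theta = \gamma\bar\theta = (\gamma\gamma^*)\bar\theta$, so $\bar\theta$ factors through $C_X/J^\sharp \cong \FAd(X)$ to give the required monoid morphism $\hat\theta$, with $[\gamma]_F\hat\theta = ([\gamma\theta_1]_Y, \gamma\theta_2)$.

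The genuinely delicate points here are organisational rather than computational. The relation set $R$ has been engineered with exactly the two families $R_1, R_2$ so that $R_1$ absorbs the defining relations $\gamma^+\gamma = \gamma$ and $R_2$ those of the form $\gamma\gamma^* = \gamma$, after each has been rewritten via Lemma~\ref{lem:thetaprod}; the thing I expect to have to get right is the bookkeeping that confirms these are the only relations requiring checking. In particular, the $h=1$ slices of $R_1$ and $R_2$ suffice for the morphism property, while the full $F_X$-orbits are needed only earlier, to guarantee that $R^\sharp$ is $F_X$-invariant and hence that $Y_X \rtimes F_X$ is well defined.
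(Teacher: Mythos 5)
Your argument is correct, and it is organised genuinely differently from the paper's. The paper proves well-definedness first, by induction on single applications of relations from $J$ inside an arbitrary context: it expands $(\delta\gamma^+\gamma\delta')\theta_1$ and $(\delta\gamma\delta')\theta_1$ via Lemma~\ref{lem:thetaprod} and then invokes the relations of $R_1$ (respectively $R_2$) at $h=d$, the trunk word of the left context $\delta$ --- which is precisely why $R$ is defined with all translates $h\in F_X$ rather than just $h=1$ appearing in the verification; only afterwards does it check the morphism property, again via Lemma~\ref{lem:thetaprod}. You instead build the morphism $\bar\theta=\theta q$ on $C_X$ first (the quotient map $q$ being a morphism because the $F_X$-action descends to $Y_X$) and use the abstract fact that $\ker\bar\theta$ is a congruence containing the generating set $J$, hence contains $J^\sharp$; this reduces well-definedness to the two generating families of $J$ with no surrounding context, so only the $h=1$ slices of $R_1$ and $R_2$ are invoked directly. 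Your closing observation is also accurate: the full $F_X$-orbits in $R$ remain indispensable, but only to make $R^\sharp$ invariant under the action so that $Y_X\rtimes F_X$ and $q$ exist, a point the paper records just before the proposition. Both proofs rest on the same computational inputs (Lemma~\ref{lem:thetaprod}, the triviality of the trunk words of $\gamma^+$ and $\gamma^*$, and the design of $R$); yours trades the paper's explicit context bookkeeping for the universal property of generated congruences, which is a genuine streamlining of the well-definedness step.
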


\begin{proof}By Remark \ref{rem:quotient}, we may view $\FAd(X)$ as the quotient $C_X/J^\sharp$ and write elements in the form $[\gamma]_F$ where $\gamma \in C_X$. We claim that 
$$[\gamma]_F \hat\theta = ([\gamma\theta_1]_Y, \gamma\theta_2)$$
is a well-defined monoid morphism. 

Suppose that $\gamma_1, \gamma_2 \in C_X$ with $[\gamma_1]_F = [\gamma_2]_F$; we aim to show $([\gamma_1\theta_1]_Y, \gamma_1\theta_2) = ([\gamma_2\theta_1]_Y, \gamma_2\theta_2)$. By induction, it suffices to show that this is the case when $\gamma_2$ is obtained by applying a single relation from $J$ to $\gamma_1$. Suppose first that
$\gamma_1 = \delta\gamma\delta'$ and $\gamma_2 = \delta\gamma^+\gamma\delta'$. Since $\theta_2$ is a morphism and $\gamma^+\theta_2 =1$ we have
$$\gamma_2\theta_2 = \delta\theta_2\gamma^+\theta_2\gamma\theta_2\delta'\theta_2 = \delta\theta_2\gamma\theta_2\delta'\theta_2 = \gamma_1\theta_2.$$
Let $d$ be the trunk word of $\delta$, $c$ the trunk word of $\gamma$, $d'$ the trunk word of $\delta'$ and $g \in E(X)$ be such that $[g]_C = \gamma^+$. Then by Lemma \ref{lem:thetaprod}
\begin{eqnarray*}
\gamma_2 \theta_1 &=& (\delta\gamma^+\gamma\delta')\theta_1= (\delta\gamma^+\gamma)\theta_1 (dc \cdot \delta'\theta_1)\\
&=& \delta\theta_1 \, (d \cdot (\gamma^+\gamma\theta_1))\,  (dc \cdot (\delta'\theta_1)),\\&=& \delta\theta_1 \, (d \cdot (\gamma^+\theta_1\gamma\theta_1))\,  (dc \cdot (\delta'\theta_1)),
\end{eqnarray*}
and likewise 
\begin{eqnarray*}
\gamma_1 \theta_1 &=&\delta\theta_1 \, (d \cdot (\gamma\theta_1))\,  (dc \cdot (\delta'\theta_1)).
\end{eqnarray*}
Hence, by using a relation from $R_1$  we have
\begin{eqnarray*}
[\gamma_2 \theta_1]_Y  &=& [\delta\theta_1 \, (d \cdot (\gamma^+\theta_1\gamma\theta_1))\,  (dc \cdot (\delta'\theta_1))]_Y\\
&=& [\delta\theta_1 \, (d \cdot (\gamma\theta_1))\,  (dc \cdot (\delta'\theta_1))]_Y\\
&=& [\gamma_1\theta_1]_Y.
\end{eqnarray*}
Similarly, if $\gamma_1 = \delta\gamma\delta'$ and $\gamma_2 = \delta\gamma\gamma^*\delta'$, we again find that $\gamma_2\theta_2 =  \gamma_1\theta_2.$ Let  $d$ be the trunk word of $\delta$, $c$ the trunk word of $\gamma$, $d'$ the trunk word of $\delta'$ and $g \in E(X)$ be such that $[g]_C = \gamma^*$. 
Then using Lemma \ref{lem:thetaprod} again we find
\begin{eqnarray*}
\gamma_1 \theta_1 &=& \delta\theta_1 (d \cdot (\gamma\theta_1)) (dc \cdot (\delta'\theta_1))\\
\gamma_2\theta_1&=& \delta\theta_1 \, (d \cdot (\gamma\theta_1 \,  (c \cdot (\gamma^*\theta_1) ))   \,  (dc \cdot (\delta'\theta_1)).
\end{eqnarray*}
Hence, by using a relation from $R_2$ (noting that $c=\gamma\theta_2$) we have
\begin{eqnarray*}
[\gamma_2 \theta_1]_Y  &=& [\delta\theta_1 \, (d \cdot (\gamma\theta_1 \,  (c \cdot (\gamma^*\theta_1) ))   \,  (dc \cdot (\delta'\theta_1))]_Y\\
&=& [\delta\theta_1 \, (d \cdot (\gamma\theta_1))  \,  (dc \cdot (\delta'\theta_1))]_Y\\
&=& [\gamma_1\theta_1]_Y.
\end{eqnarray*}

This shows that $\hat\theta$ is well-defined. To see that $\hat\theta$ is a monoid morphism, let $\gamma, \delta \in C_X$. Then by Lemma \ref{lem:thetaprod} we have:
\begin{eqnarray*}
[\gamma]_F\hat\theta[\delta]_F\hat\theta &=&([\gamma\theta_1]_Y, \gamma\theta_2)([\delta\theta_1]_Y, \delta\theta_2) \\
&=& ([\gamma\theta_1]_Y \, (\gamma\theta_2 \cdot [\delta\theta_1]_Y), \gamma\theta_2\delta\theta_2)\\
&=& ([\gamma\theta_1 \, (\gamma\theta_2 \cdot (\delta\theta_1))]_Y, (\gamma\delta)\theta_2)\\
&=&([(\gamma\delta)\theta_1]_Y, \gamma\delta\theta_2) \\
&=& [\gamma\delta]_F\hat\theta .
\end{eqnarray*}
It is also clear from definition that the identity element of $C_X /J^\sharp$ is mapped under $\hat\theta$ to $([1]_Y, 1)$.
\end{proof}

It remains to show that  $\hat\theta$ is an embedding. Before proceeding to the proof, we record, for ease of reference, some well-known properties of left Ehresmann monoids (see for example \cite{GG}) that we will need. We omit the left-right dual statements that hold for right Ehresmann monoids.

\begin{lemma}
Let $M$ be a left Ehresmann monoid. Then for all $x,y \in M$ and $e \in E(M)$ we have:
\begin{eqnarray}
\label{eq:(xy)+}(xy)^+ = (xy^+)^+\\
\label{leq:(xy)+}(xy)^+ \leq x^+.
\end{eqnarray}
\end{lemma}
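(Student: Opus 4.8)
The plan is to derive both identities purely from the structural axioms of a left Ehresmann monoid, without ever unpacking the ``tree'' description, using three standing facts about the plus operation: that $a^+$ is the \emph{unique} projection in the $\widetilde{\mathcal{R}}_E$-class of $a$ (so $a\,\widetilde{\mathcal{R}}_E\,a^+$); that $\widetilde{\mathcal{R}}_E$ is a left congruence (this is part of the definition of left $E$-Ehresmann); and that $a^+a=a$ for all $a$. The last of these is immediate: since $a^+\in E$ is idempotent it is a left identity for itself, and as $a^+\,\widetilde{\mathcal{R}}_E\,a$ the two elements have exactly the same left identities from $E$, whence $a^+a=a$. I would also record at the outset that on projections the preorder $\leq_{\widetilde{\mathcal{R}}_E}$ coincides with the semilattice order $\leq$ on $E$, since the $\leq$ appearing in \eqref{leq:(xy)+} is the semilattice order on $E(M)$.

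For \eqref{eq:(xy)+} the argument is short. By definition of the plus operation we have $y\,\widetilde{\mathcal{R}}_E\,y^+$. Since $\widetilde{\mathcal{R}}_E$ is a left congruence, left-multiplying by $x$ yields $xy\,\widetilde{\mathcal{R}}_E\,xy^+$. The elements $xy$ and $xy^+$ therefore lie in the same $\widetilde{\mathcal{R}}_E$-class, and as that class contains a unique projection we conclude $(xy)^+=(xy^+)^+$, which is exactly \eqref{eq:(xy)+}.

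For \eqref{leq:(xy)+} I would argue via left identities. Using $x^+x=x$ we compute $x^+(xy)=(x^+x)y=xy$, so the projection $x^+$ is a left identity for $xy$. Now $xy\,\widetilde{\mathcal{R}}_E\,(xy)^+$, so $xy$ and $(xy)^+$ share all their left identities from $E$; since $x^+\in E$ is a left identity for $xy$, it is also a left identity for $(xy)^+$, i.e. $x^+(xy)^+=(xy)^+$. By commutativity of the semilattice $E$ this says $(xy)^+x^+=(xy)^+$, which is precisely $(xy)^+\leq x^+$ in the semilattice order, establishing \eqref{leq:(xy)+}.

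Neither step involves a genuine obstacle; the work is entirely in invoking the correct axiom at each stage. The one point that warrants care — and the place I would be most alert to slips — is keeping straight which of $\widetilde{\mathcal{L}}_E$ and $\widetilde{\mathcal{R}}_E$ is a \emph{left} versus a \emph{right} congruence, since the left-Ehresmann setting is the left--right dual of the definitions as stated in the excerpt. Concretely, \eqref{eq:(xy)+} relies on $\widetilde{\mathcal{R}}_E$ being compatible with multiplication \emph{on the left}, which is exactly the dual of the right-congruence property of $\widetilde{\mathcal{L}}_E$ asserted in Definition~\ref{def:ehresmann}; once that is fixed, both proofs are a couple of lines each.
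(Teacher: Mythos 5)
Your proof is correct. The paper itself gives no proof of this lemma---it records these as well-known facts, citing \cite{GG}---and your argument (using that $\widetilde{\mathcal{R}}_E$ is a left congruence, that each $\widetilde{\mathcal{R}}_E$-class contains a unique element of $E$, and that $a^+a=a$) is exactly the standard derivation one would expect, with the left--right duality handled correctly throughout.
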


\begin{lemma}\label{lem:simplefold} Let $S$ be a left $E$-Ehresmann monoid. The for any $a,b\in S$ and $e,f\in E$ with  $e\leq f$ we have that $(aeb)^+\leq (afb)^+$. In particular, $(aeb)^+\leq (ab)^+$.\end{lemma}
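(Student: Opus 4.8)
The plan is to reduce the whole statement to the monotonicity of the map $g \mapsto (ag)^+$ over the semilattice $E$, using the two identities \eqref{eq:(xy)+} and \eqref{leq:(xy)+} recorded in the previous lemma. First I would strip the factor $b$ down to its projection $b^+$: applying \eqref{eq:(xy)+} with $x=ae,\ y=b$ and again with $x=af,\ y=b$ gives
\[(aeb)^+ = (ae\,b^+)^+ = (a(eb^+))^+, \qquad (afb)^+ = (af\,b^+)^+ = (a(fb^+))^+,\]
where now $eb^+,\ fb^+ \in E$, since $E$ is closed under products. This moves the comparison entirely into products of the form $(a\cdot g)^+$ with $g\in E$.

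Next I would transfer the hypothesis $e\leq f$ into the semilattice. From $ef=e$ together with commutativity and idempotency in $E$ one computes $(eb^+)(fb^+)=ef\,b^+=eb^+$, so $eb^+\leq fb^+$. Writing $g=eb^+$ and $h=fb^+$, it then suffices to prove the sublemma that $(ag)^+\leq (ah)^+$ whenever $g\leq h$ in $E$. For this I use that $g\leq h$ forces $hg=g$, whence $ag=(ah)g$; applying \eqref{leq:(xy)+} with $x=ah$ and $y=g$ yields $(ag)^+=((ah)g)^+\leq (ah)^+$. Chaining this with the displayed equalities gives $(aeb)^+=(ag)^+\leq (ah)^+=(afb)^+$, as required. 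The \emph{in particular} clause is just the instance $h=b^+$: since $eb^+\leq b^+$, the sublemma gives $(aeb)^+=(a(eb^+))^+\leq (ab^+)^+=(ab)^+$ (equivalently, one takes $f$ to be the top element of $E$, when the identity lies in $E$).

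I do not anticipate a genuine obstacle here, as the argument is short and purely structural. The one point needing care is the legitimacy of invoking \eqref{eq:(xy)+} and \eqref{leq:(xy)+}: these are stated in the excerpt for left Ehresmann monoids with $E=E(S)$, whereas here $E$ may be a proper subsemilattice of idempotents. Their standard proofs, however, use only that $\widetilde{\mathcal{R}}_E$ is a left congruence each of whose classes contains a unique projection of $E$, so they carry over verbatim to the left $E$-Ehresmann setting; I would simply remark on this rather than reprove them.
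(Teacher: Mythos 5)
Your proof is correct and follows essentially the same route as the paper's: both arguments reduce $b$ to $b^+$ via \eqref{eq:(xy)+}, use $e\leq f$ together with commutativity in $E$ to rewrite $aeb^+$ as a product of $afb^+$ with an idempotent, and then invoke \eqref{leq:(xy)+}; your extraction of the monotonicity sublemma for $g\mapsto (ag)^+$ is just a repackaging of the paper's single chain of equalities verifying $(afb)^+(aeb)^+=(aeb)^+$. Your closing caution about the identities being stated for $E=E(S)$ but needed for a subsemilattice $E$ is apt, and the paper implicitly makes the same extension.
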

\begin{proof} We calculate
\[\begin{array}{rcll} (afb)^+(aeb)^+&=&(afb^+)^+(aeb^+)^+&\mbox{by \eqref{eq:(xy)+}}\\
&=&(afb^+)^+(afb^+e)^+& \mbox{as }e\leq f\\
&=&(afb^+e)^+&\mbox{by \eqref{leq:(xy)+}}\\
&=&(aeb)^+&\mbox{by \eqref{eq:(xy)+}}.\end{array}
\]
\end{proof}

\begin{lemma}\label{lem:theta_1} For any $\gamma_1, \gamma_2\in C_X$, if $[\gamma_1\theta_1]_Y=[\gamma_2\theta_1]_Y$,
then $[\gamma_1]_F=[\gamma_2]_F$.
\end{lemma}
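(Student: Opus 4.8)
The plan is to recover the element $[\gamma]_F$ of $\FAd(X)$ from the single datum $[\gamma\theta_1]_Y$, in two stages: first the trunk word $\gamma\theta_2$, and then the whole pruned tree. By Remark~\ref{rem:theta_1} we may write $\gamma\theta_1 = \tau_{c,1}\prod_{p\in P(\gamma)}e_{l(p,\gamma),p}$, where $c=\gamma\theta_2$ is the trunk word; thus $\gamma\theta_1$ is built from two kinds of generators, the $y$'s (encoding the trunk) and the $e$'s (encoding the idempotent hanging at each trunk vertex).

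First I would recover the trunk. Let $\pi\colon Z_X\to \overline Z_X$ be the semilattice morphism onto the free semilattice on the generators $y_{x,h}$ that sends every $y_{x,h}$ to itself and every $e_{f,h}$ to the identity; this respects $Q$ since $Q$ only involves the $e$-generators. Because $\gamma^+$ and $\gamma^*$ have trivial trunk, $\gamma^+\theta_1$ and $\gamma^*\theta_1$ are products of $e$-generators, so the two sides of every relation in $R_1$ and $R_2$ have identical image under $\pi$; hence $\pi$ factors through $Y_X$. Since $\pi(\gamma\theta_1)=\pi(\tau_{c,1})$ is the set $\{y_{x_i,\,x_1\cdots x_{i-1}}\}$ of $y$-generators of $\tau_{c,1}$, and this set visibly determines $c$ (the generator with second subscript $1$ gives $x_1$, then the one with second subscript $x_1$ gives $x_2$, and so on), equality of $[\cdot]_Y$-classes forces $\gamma_1\theta_2=\gamma_2\theta_2$. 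I note in passing that the same bookkeeping shows $R$ has no nontrivial instances among pure products of $e$-generators (both sides of a nontrivial relation carry the nonempty trunk of the relating element), so on the idempotent generators $f\mapsto[e_{f,1}]_Y$ is already injective; this settles the case of trivial trunk.

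Next I would recover the tree. The idea is to build a single $R$-invariant semilattice morphism $\Theta\colon Z_X\to E(X)$ which folds everything back to the start vertex, defined on generators by letting $\Theta(e_{f,w})$ be the (pruned) idempotent obtained by reading the reduced word $w\in F_X$ as a simple directed path $P_w$ from the start (a forward edge for each $x$, a backward edge for each $x^{-1}$, which is legitimate in the two-sided Ehresmann setting), attaching $f$ at the far end, and folding to an idempotent with start equal to end; and similarly $\Theta(y_{x,w})=P_{wx}^{+}$. One checks that $\Theta$ respects $Q$ and that $\Theta(\gamma\theta_1)=\gamma^+$ using the identities \eqref{eq:(xy)+}, \eqref{leq:(xy)+} and Lemma~\ref{lem:simplefold} (and their $*$-duals). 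The crucial point is $R$-invariance: for a relation in $R_1$ the extra factor is $\Theta(h\cdot\gamma^+\theta_1)$, which equals $\Theta(h\cdot\gamma\theta_1)$ because both compute ``the graph of $\gamma$, placed at $h$, folded to the start''; for $R_2$ the extra factor $\Theta(hc\cdot\gamma^*\theta_1)$ equals the same thing, since $\gamma^*$ is the graph of $\gamma$ rooted at its end and the path $hc$ reaches that end. Thus $\Theta$ descends to $Y_X$. Because the defining set $R$ is closed under the $F_X$-action, each shifted map $\Theta^{(w)}$, given by $\Theta^{(w)}(z)=\Theta(w^{-1}\cdot z)$, is again constant on $R^\sharp$-classes, and $\Theta^{(w)}(\gamma\theta_1)$ is the pruned tree of $[\gamma]_F$ rooted at the vertex reached by $w$. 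The collection of these rooted idempotents, together with the trunk word $c$, determines the pruned $X$-tree of $[\gamma]_F$ (the trunk word alone need not locate the end vertex, since an Ehresmann tree may carry several equally labelled forward paths, but the rooted idempotents at all trunk vertices pin down which path is the trunk); invoking uniqueness of pruned trees from \cite{K,K2} then gives $[\gamma_1]_F=[\gamma_2]_F$.

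The hard part will be Step~2, and specifically the construction and verification of $\Theta$. Two things need care: showing that $\Theta$ is a well-defined semilattice morphism (that the ``fold'' operation into $E(X)$ is compatible with $Q$ and with the path-extension used for the $y$-generators), and proving $R$-invariance, that is, the folding identities $\Theta(h\cdot\gamma\theta_1)=\Theta(h\cdot\gamma^+\theta_1)=\Theta(hc\cdot\gamma^*\theta_1)$. Both reduce, via Lemma~\ref{lem:thetaprod} and induction on the number of idempotent factors, to idempotent identities in $\FAd(X)$ of the kind recorded in \eqref{eq:(xy)+} and Lemma~\ref{lem:simplefold}; the subtlety is that, unlike for inverse (Munn) trees, distinct vertices of an Ehresmann tree may represent the same element of $F_X$, so one cannot shortcut by passing to vertex sets in $F_X$ and must keep the full $E(X)$-valued data throughout. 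I expect the reconstruction of the tree from its family of rooted idempotents to be routine once $\Theta$ is in hand.
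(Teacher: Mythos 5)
Your strategy is genuinely different from the paper's: the paper proves the lemma by a step-by-step lifting argument, showing that a single elementary application of a relation from $R$ to an element of the form $\gamma\theta_1$ produces $\alpha\theta_1$ for some $\alpha\in C_X$ with $[\alpha]_F=[\gamma]_F$ (and then uses injectivity of $\theta_1$ on $C_X$ plus induction along the chain), whereas you try to build global $R$-invariant semilattice morphisms out of $Z_X$ whose values reconstruct $[\gamma]_F$. Your first step (the morphism $\pi$ killing the $e$-generators, recovering the trunk word) is correct. The problem is the second step.

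The map $\Theta$ as you have defined it does \emph{not} descend to $Y_X$: it fails to respect $R_1$. Take $\gamma=[x]_C$ and $h=x^{-1}$. Then $\gamma\theta_1=y_{x,1}$ and $\gamma^+\theta_1=e_{x^+,1}$, so $R_1$ contains the pair $\bigl(e_{x^+,x^{-1}}\,y_{x,x^{-1}},\; y_{x,x^{-1}}\bigr)$, i.e.\ $[y_{x,x^{-1}}]_Y\leq[e_{x^+,x^{-1}}]_Y$ in $Y_X$. But with your definition $\Theta(y_{x,x^{-1}})=P_{x^{-1}x}^{+}=P_{1}^{+}=1_{E(X)}$, while $\Theta(e_{x^+,x^{-1}})$ is the nontrivial idempotent $x^{*}$ (the path $P_{x^{-1}}$ with $x^{+}$ attached at its far end folds onto the single edge pointing into the basepoint). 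Hence $\Theta$ applied to the left-hand side gives $x^{*}\neq 1_{E(X)}$, the value on the right-hand side, so $\Theta$ is not constant on $R^{\sharp}$-classes. The root cause is that free reduction in $F_X$ discards exactly the backtracking edges that the Ehresmann tree must retain; any folding map has to remember them. One can try to repair this by interpreting $P_{wx}$ as the \emph{unreduced} concatenation followed by pruning (which fixes this instance), but then the verification that $\Theta$ respects every relation $\bigl(h\cdot(\gamma^+\theta_1\gamma\theta_1),\,h\cdot\gamma\theta_1\bigr)$ and $\bigl(h\cdot(\gamma\theta_1(\gamma\theta_2\cdot\gamma^*\theta_1)),\,h\cdot\gamma\theta_1\bigr)$ for arbitrary $h\in F_X$ is precisely where all the combinatorial content of the paper's proof lives (its four cases and the position-by-position bookkeeping of the idempotents $l(p,\gamma)$); you have asserted rather than carried out this verification, so even under the charitable reading the argument is a programme, not a proof. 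Finally, the closing reconstruction claim — that the trunk word together with the family $\Theta^{(w)}(\gamma\theta_1)$ determines the pruned tree, despite distinct vertices sharing the same $F_X$-address — is also nontrivial and is only asserted.
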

\begin{proof}

We first show that if $\gamma\in C_X$ and $\gamma\theta_1$ is related to $\beta\in Z_X$ via a single
application of a relation in $R$, then $\beta=\alpha\theta_1$ for some $\alpha\in C_X$ with 
$[\gamma]_F=[\alpha]_F$. 
The proof then follows from induction on the number of applications of relations from $R$ that we need to obtain
$\gamma_1\theta_2$ from $\gamma_2\theta_2$. 

We denote the trunk of $\gamma$ by $t$, so that by Remark~\ref{rem:theta_1}
we have\[\gamma \theta_1 = \tau_{t,1} \Pi_{p \in P(\gamma)} e_{l(p,\gamma),p}.\]
There are four cases to consider, corresponding to the two types of relation that can be applied and the direction of their application, and recalling that ${\rm Im}\, \theta_1$ is contained in the semilattice $Z_X$.

{\em Case (i)} $\gamma\theta_1=\mu(h\cdot \delta\theta_1)$ and $\mu (h\cdot (\delta^+\theta_1\delta\theta_1)) = \gamma\theta_1 (h\cdot \delta^+\theta_1)=\beta$ for some
$h\in F_X, \delta\in C_X$ and $\mu\in Z_X$.
From Remark~\ref{rem:theta_1} again we have that
\begin{eqnarray*}
\delta \theta_1 &=& \tau_{d,1} \Pi_{p \in P(\delta)} e_{l(p,\delta),p}\end{eqnarray*}
where $d$ is the trunk of $\delta$, 
so that in $Z_X$ we have 
\[\gamma\theta_1 =\tau_{t,1} \Pi_{p \in P(\gamma)} e_{l(p,\gamma),p}=\mu \tau_{d,h}\Pi_{p \in P(\delta)} e_{l(p,\delta),hp} = \mu (h \cdot \delta \theta_1).\]
 Since $\tau_{d,h}$ is a factor of $\tau_{t,1} $, we call upon Lemma~\ref{lem:tauprod} to deduce that
$h\in X^*$ and there exists $u\in X^*$ such that $t=hdu$ and $\tau_{t,1}=\tau_{h,1}\tau_{d,h}\tau_{u,hd}$ where
$\tau_{h,1}$ is interpreted as empty if $h=1$. We may therefore factor $\gamma$ as $\gamma=\eta_1\bar{\delta}\eta_2$, where the
trunks of $\eta_1, \bar{\delta}$ and $ \eta_2\in C_X$ are  $h,d$ and $u$ respectively; if $h=1$ (respectively, $u=1$) we take $\eta_1$
(respectively, $\eta_2$) to be empty and we also insist that $\eta_1$ (if non-empty) finishes with an element of $X^+$ and
$\eta_2$ (if non-empty) starts with an element of $X^+$.

Given the relations in $Q$ that determine $Z_X$, 
we may assume $\mu=\tau_{h,1}\tau_{u, hd}\Pi_{p\in P}e_{f,p}$ where $P\subseteq P(\gamma)$.
Further, for $p\in P(\gamma)$ we have:
\[\begin{array}{lrcll}
\mbox{if  }|p|<|h| \mbox{ or }|p|>|hd|& \mbox{ then }l(p,\gamma)&=&\begin{array}{ll} f & \phantom{mmmm }  \mbox{where } e_{f,p}\mbox{ is a factor of }\mu\end{array}\\
&&\\
\mbox{if }p=hq,|q|\leq |d|& \mbox{then } l(p,\gamma)&=&\left\{ \begin{array}{ll} f&\mbox{where }e_{f,p}\mbox{ is a factor of }\mu\\
&  \mbox{and }q\notin P(\delta)\\
l(q,\delta)& \mbox{where }e_{f,p}\mbox{ not a factor of }\mu\\
&  \mbox{and }q\in P(\delta)\\
fl(q,\delta)& \mbox{where } e_{f,p}\mbox{ is a factor of }\mu\\
&\mbox{and }q\in P(\delta). \end{array}\right.
\end{array}\]

 With the factorisation  $\gamma=\eta_1\bar{\delta}\eta_2$ (and recalling that $\eta_1$ has trunk $h$ and $\bar{\delta}$ has trunk $d$) Lemma~\ref{lem:thetaprod} gives 
\[\begin{array}{rcl}
\mu \tau_{d,h}\Pi_{p \in P(\delta)} e_{l(p,\delta),hp}&=&\gamma\theta_1\\
&=&(\eta_1\theta_1) (h\cdot \bar{\delta}\theta_1)(hd\cdot \eta_2\theta_1)\\
&=&(\eta\theta_1)(\tau_{d,h}\Pi_{p \in P(\bar{\delta})} e_{l(p,\bar{\delta}),hp})(hd\cdot \eta_2\theta_1).\end{array}\]
From the remarks above we see $\delta$ and $\bar{\delta}$ share the same trunk, and if
$\delta$ has an idempotent $l(p,\delta)$ at position $p$, then so does $\bar{\delta}$ 
(and it is the idempotent $l(hp,\gamma)$) and it follows that $l(p,\bar{\delta})\leq l(p,\delta)$.  The fact that 
 $[\bar{\delta}^+]_F\leq [\delta^+]_F$ follows from application of Lemma \ref{lem:simplefold}. Notice also that
\[\eta\theta_1(hd\cdot  \eta_2\theta_1)=\tau_{h,1}\tau_{u, hd}\Pi_{p\in P'(\gamma)} e_{l(p,\gamma),p},\]
where $P'(\gamma)=\{ p\in P(\gamma): |p|<|h| \mbox{ or } |hd|<|p|\}$.

Now let $\alpha=\eta_1\delta^+\bar{\delta}\eta_2\in C_X$. We have that
\[[\alpha]_F=[\eta_1\delta^+\bar{\delta}\eta_2]_F=[\eta_1\delta^+\bar{\delta}^+\bar{\delta}\eta_2]_F=[\eta_1\bar{\delta}^+\bar{\delta}\eta_2]_F=[\eta_1\bar{\delta}\eta_2]_F=[\gamma]_F.\]
Further, making use of Lemma~\ref{lem:thetaprod} and commutativity of $Z_X$ we have
\[\begin{array}{rcl}
\alpha\theta_1&=& (\eta_1\theta_1)(h\cdot (\delta^+\bar{\delta})\theta_1)(hd\cdot \eta_2\theta_1)\\
&=& (\eta_1\theta_1)(h\cdot \delta^+\theta_1)(h\cdot \bar{\delta}\theta_1)(hd\cdot \eta_2\theta_1)\\
&=&(\eta_1\bar{\delta}\eta_2)\theta_1 (h\cdot \delta^+\theta_1)\\
&=&(\gamma\theta_1)(h\cdot \delta^+\theta_1)\\
&=&\beta.
\end{array}\]

{\em Case (ii)} $\gamma\theta_1=\mu(h\cdot \delta\theta_1)$ and $\mu (h\cdot (\delta\theta_1(\delta\theta_2\cdot \delta^*\theta_1)))=\mu (h\cdot \delta\theta_1)(h \, \delta\theta_2\cdot \delta^*\theta_1)))=\beta$ for some
$h\in F_X, \delta\in C_X$ and $\mu\in Z_X$. Arguing exactly as in Case {\em (i)}, we may factorise $\gamma$ as $\gamma = \eta_1\bar{\delta} \eta_2$, where $\delta$ and $\bar{\delta}$ share the same trunk and if $\delta$ has an idempotent $l(p, \delta)$ at position $p$, then $\bar{\delta}$ has an idempotent $l(p, \bar{\delta})$ at position $p$ with $l(p, \bar{\delta}) \leq l(p, \delta)$. By the dual to Lemma \ref{lem:simplefold} we have $[\bar{\delta}^*]_F \leq [\delta^*]_F$. Then setting $\alpha = \eta_1 \bar{\delta}\delta^*\eta_2$, yields
\[[\alpha]_F=[\eta_1\bar{\delta}\delta^*\eta_2]_F=[\eta_1\bar{\delta}\bar{\delta}^*\delta^*\eta_2]_F=[\eta_1\bar{\delta}\bar{\delta}^*\eta_2]_F=[\eta_1\bar{\delta}\eta_2]_F=[\gamma]_F,\]
and
\[\begin{array}{rcl}
\alpha\theta_1&=& (\eta_1\theta_1)(h\cdot (\bar{\delta}\delta^*)\theta_1)(hd\cdot \eta_2\theta_1)\\
&=& (\eta_1\theta_1)(h\cdot (\bar{\delta}\theta_1(\bar{\delta}\theta_2\cdot \delta^*\theta_1)(hd\cdot \eta_2\theta_1)\\
&=& (\eta_1\theta_1)(h\cdot (\bar{\delta}\theta_1)(h\cdot \delta\theta_2\cdot \delta^*\theta_1)(hd\cdot \eta_2\theta_1)\\
&=& (\eta_1\bar{\delta}\eta_1)\theta_1 (h\, \delta\theta_2\cdot \delta^*\theta_1)\\
&=& \gamma\theta_1 (h\,\delta\theta_2\cdot \delta^*\theta_1)\\
&=&\beta.
\end{array}\] 

{\em Case (iii)} $\gamma\theta_1=\mu (h\cdot (\delta^+\theta_1\delta\theta_1))=\mu (h\cdot ((\delta^+\delta)\theta_1))$ and $\mu (h\cdot \delta\theta_1)=\beta$ for some
$h\in F_X, \delta\in C_X$ and $\mu\in Z_X$. The argument here is similar to that in Case $(i)$, but we need slightly more care. As before
\begin{eqnarray*}
\delta \theta_1 &=& \tau_{d,1} \Pi_{p \in P(\delta)} e_{l(p,\delta),p}\end{eqnarray*}
where $d$ is the trunk of $\delta$,
so that in $Z_X$
\[\gamma\theta_1 =  \tau_{t,1} \Pi_{p \in P(\gamma)} e_{l(p,\gamma),p}=\mu e_{\delta^+,h}\tau_{d,h} \Pi_{p \in P(\delta)} e_{l(p,\delta),hp} = \mu (h \cdot (\delta^+ \theta_1 \delta\theta_1)).\]
We again call upon Lemma~\ref{lem:tauprod} to deduce that
$h\in X^*$ and there exists $u\in X^*$ such that $t=hdu$. Again we may  factor $\gamma$ as $\gamma=\eta_1\bar{\delta}\eta_2$, where the
trunks of $\eta_1, \bar{\delta}$ and $ \eta_2\in C_X$ are  $h,d$ and $u$ respectively; if $h=1$ (respectively, $u=1$) we take $\eta_1$
(respectively, $\eta_2$) to be empty and we also insist that $\eta_1$ (if non-empty) finishes with an element of $X^+$ and
$\eta_2$) (if non-empty) starts with an element of $X^+$. In this case note that the first term of $\bar{\delta}$ is in $\mathcal{E}_X$,
that is, $1 \in P(\bar{\delta})$ and $\delta^+l(1, \bar{\delta})= l(1, \bar{\delta})$.

Given the relations in $Q$ that determine $Z_X$, 
we may assume $\mu=\tau_{h,1}\tau_{u,hd}\Pi_{p\in P}e_{f,p}$ where $P\subseteq P(\gamma)$.
Further, for $p\in P(\gamma)$ we have:

\[\begin{array}{lrcll}
\mbox{if }|p|<|h| \mbox{ or }|p|>|hd|& \mbox{ then } l(p,\gamma)&=&\begin{array}{ll} \,\,\, \,\,f & \phantom{mmmm } \mbox{ where } e_{f,p}\mbox{ is a factor of }\mu\end{array}\\
&&\\
&  l(h,\gamma)  = l(1, \bar{\delta})&=&\left\{ \begin{array}{ll} \delta^+&\mbox{ where no }e_{f,h}\mbox{ is a factor of }\mu\\
&  \mbox{ and }1\notin P(\delta)\\
\delta^+f& \mbox{ where }e_{f,h}\mbox{ is a factor of }\mu\\
&  \mbox{ and }1\notin P(\delta)\\
\delta^+l(1,\delta)&  \mbox{ where no }e_{f,h}\mbox{ is a factor of }\mu\\
&  \mbox{ and }
1\in P(\delta)\\
\delta^+fl(1,\delta)& \mbox{ where }e_{f,h}\mbox{ is a factor of }\mu\\
& \mbox{ and } 1\in P(\delta)\end{array}\right.\\
&&\\
\mbox{if }p=hq,\, 1<|q|\leq |d|&\mbox{ then } l(p,\gamma)&=&\left\{ \begin{array}{ll} f&\,\, \,\, \, \mbox{ where }e_{f,p}\mbox{ is a factor of }\mu\\
& \,\, \,\, \,\mbox { and }q\notin P(\delta)\\
l(q,\delta)& \,\, \,\, \,  \mbox{ where no }e_{f,p}\mbox{ is a factor of }\mu\\
& \,\, \,\, \,  \mbox{ and }q\in P(\delta)\\
fl(q,\delta)& \,\, \,\, \, \mbox{ where   }e_{f,p}\mbox{ is a factor of }\mu\\
&\,\, \,\, \, \mbox{ and }q\in P(\delta).\end{array}\right.
\end{array}\]

With the factorisation, $\gamma=\eta_1\bar{\delta}\eta_2$ Lemma~\ref{lem:thetaprod} gives 
\[\begin{array}{rcl}
\mu e_{\delta^+,h}\tau_{d,h} \Pi_{p \in P(\delta)} e_{l(p,\delta),hp}&=&\gamma\theta_1\\
&=&(\eta_1\theta_1) (h\cdot \bar{\delta}\theta_1)(hd\cdot \eta_2\theta_1)\\
&=&(\eta\theta_1)(\tau_{d,h}\Pi_{p \in P(\bar{\delta})} e_{l(p,\bar{\delta}),hp})(hd\cdot \eta_2\theta_1).\end{array}\]
Recalling that $1 \in P(\bar{\delta})$ and $\delta^+l(1, \bar{\delta}) = l(1, \bar{\delta})$ we now let $\bar{\delta}_0\in C_X$ be the same element as  $\bar{\delta}$ 
with the exception that the first idempotent $l(1, \bar{\delta}) = l(h, \gamma)$ is replaced by $l(1, \bar{\delta}_0)$ defined as follows: 
\[\begin{array}{rcl}
l(1,\bar{\delta}_0)&=& \left\{\begin{array}{ll}1&\mbox{ where no }e_{f,h}\mbox{ is  a factor of }\mu\\
& \mbox{ and } 1\notin P(\delta)\\
f& \mbox{ where } e_{f,h}\mbox{ is a factor of }\mu\\
&  \mbox{ and } 1\notin P(\delta)\\
l(1,\delta)&  \mbox{ where no }e_{f,h}\mbox{ is a factor of }\mu\\
&\mbox{ and }1\in P(\delta)\\
fl(1,\delta)& \mbox{ where }e_{f,h}\mbox{ is a factor of }\mu\\
&\mbox{ and }1\in P(\delta). \end{array}\right.\end{array}.\]
 In particular, notice that $\delta^+ \bar{\delta}_0 = \bar{\delta}$.
From the remarks above if $\delta$ has an idempotent $l(p,\delta)$ at position $p$, then so does $\bar{\delta}_0$ and 
$l(p,\bar{\delta}_0)\leq l(p,\delta)$. The fact that 
$[\bar{\delta}_0^+]_F\leq [\delta^+]_F$ follows from application of Lemma \ref{lem:simplefold}. Notice as in Case $(i)$  that
\[\eta\theta_1(hd\cdot  \eta_2\theta_1)=\tau_{h,1}\tau_{u, hd}\Pi_{p\in P'(\gamma)} e_{l(p,\gamma),p},\]
where $P'(\gamma)=\{ p\in P(\gamma): |h|<|p|\mbox{ or } |hd|<|p|\}$.
  Now let $\alpha=\eta_1 \overline{\delta}_0\eta_2\in C_X$. We have that 
\[[\alpha]_F=[\eta_1\bar{\delta}_0\eta_2]_F=[\eta_1\bar{\delta}_0^+\bar{\delta}_0\eta_2]_F= [\eta_1\delta^+\bar{\delta}^+_0\bar{\delta}_0\eta_2]_F=[\eta_1{\delta}^+\bar{\delta}_0\eta_2]_F= [\eta_1\bar{\delta}\eta_2]_F=[\gamma]_F.\]
Further, making use of Lemma~\ref{lem:thetaprod} we have 
\[
\alpha\theta_1=(\eta_1\theta_1)(h\cdot \bar{\delta}_0\theta_1)(hd\cdot \eta_2\theta_1).\]
We claim that
\[(\eta_1\theta_1)(h\cdot \bar{\delta}_0\theta_1)(hd\cdot \eta_2\theta_1)=\mu (h\cdot \delta\theta_1)\]
so that $\alpha\theta_1=\beta$. 
First, note that since $\alpha = \eta_1 \bar{\delta}_0 \eta_2$ and $\gamma = \eta_1 \bar{\delta}\eta_2$, where $\eta_1$ has trunk word $h$ and $l(\bar{\delta}_0, p) = l(\bar{\delta}, p)$ for all $p \neq 1$, we have that $\alpha\theta_1$ and $\gamma\theta_1$ have the same factors of the
form $y_{x,k}$ and, with the possible exception of $k=h$, of the form $e_{g,k}$. Moreover, since $\gamma\theta_1 = \mu(h \cdot (\delta^+\theta_1 \delta\theta_1))$ these factors are the same as those of  $\mu (h\cdot \delta\theta_1)$.
It remains only to examine any factors of $\alpha\theta_1$ and of $\mu (h\cdot \delta\theta_1)$ of the form $e_{g,h}$. From the definition of $\alpha$ we see that if $\alpha\theta_1$
has a factor $e_{g,h}$, then this is unique, and it exists if and only if the element
$\overline{\delta}_0$ of $C_X$ begins with a non-identity idempotent, that is, it begins with an element of $\mathcal{E}_X$. This is the case if and only if $\mu$ has a factor of the form $e_{f,h}$ or $1\in P(\delta)$, and these are precisely the conditions which give that
$\mu (h\cdot \delta\theta_1)$ has a factor of the form $e_{g,h}$. Under this assumption, the factor  $e_{g,h}$ of $\alpha\theta_1$ is such that $g=f$ ($g=l(1,\delta)$, $g=fl(1,\delta)$, respectively) where $e_{f,h}$ is a factor of $\mu$ and $1\notin P(\delta)$ (no $e_{f,h}$ is  a factor of $\mu$ and $1\in P(\delta)$,  $e_{f,h}$ is a factor of $\mu$ and $1\in P(\delta)$, respectively). But then, after potentially applying relations from $Q$, we have that $g$ is precisely the idempotent such that $e_{g,h}$ is the unique factor of
$\mu (h\cdot \delta\theta_1)$ of the form $e_{g',h}$. 

{\em Case (iv)}  $\gamma\theta_1=\mu (h\cdot (\delta\theta_1(\delta\theta_2\cdot \delta^*\theta_1)))$
and $\beta=\mu(h\cdot \delta\theta_1)$ for some
$h\in F_X, \delta\in C_X$ and $\mu\in Z_X$. 
This is dual to Case {\em (iii)}. (Briefly, letting $d$ be the trunk of $\delta$, we find that $\gamma$ may be factored as $\gamma = \eta_1\bar{\delta} \eta_2$ for some $\bar{\delta}$ with trunk $d$ and such that $d \in P(\bar{\delta})$ and $l(d, \bar{\delta})\delta^* = l(d, \bar{\delta})$. Then taking $\bar{\delta}_0 \in C_X$ to be the element obtained from $\bar{\delta}$ by `removing' the factor $\delta^*$ we have $\bar{\delta}_0\delta^* = \bar{\delta}$. Moreover, by construction we find that if $\delta$ has an idempotent $l(p, \delta)$ at position $p$, then $\bar{\delta}_0$ has an idempotent $l(p, \bar{\delta}_0)$ at this position where $l(p, \bar{\delta}_0) \leq l(p, \delta)$, from which we deduce (using the dual to Lemma \ref{lem:simplefold}) that $[\delta_0^*]_F \leq [\delta^*]F$. Then taking $\alpha = \eta_1 \bar{\delta}_0\eta_2 \in C_X$, one finds that $[\alpha]_F = [\gamma]_F$ and $\alpha \theta_1 = \beta$.)
\end{proof}

\begin{thm}\label{thm:two-sided}
The free Ehresmann monoid embeds (as a monoid) into the semidirect product $Y_X \rtimes F_X$.
\end{thm}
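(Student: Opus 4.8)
The plan is to deduce the theorem from the machinery already in place: essentially all that remains is to observe that the monoid morphism $\hat\theta$ constructed above is injective. Recall from Remark~\ref{rem:quotient} that we identify $\FAd(X)$ with the quotient $C_X/J^\sharp$ and write its elements as $[\gamma]_F$, and that the preceding Proposition shows that
\[[\gamma]_F \hat\theta = ([\gamma\theta_1]_Y,\, \gamma\theta_2)\]
is a well-defined monoid morphism $\FAd(X) \to Y_X \rtimes F_X$. An injective monoid morphism is precisely a monoid embedding, so it suffices to verify injectivity.

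For this, I would take $\gamma_1, \gamma_2 \in C_X$ with $[\gamma_1]_F\hat\theta = [\gamma_2]_F\hat\theta$ and compare the two coordinates of these equal pairs in $Y_X \rtimes F_X$. The first coordinates give $[\gamma_1\theta_1]_Y = [\gamma_2\theta_1]_Y$ (the second coordinates give $\gamma_1\theta_2 = \gamma_2\theta_2$, but this is not needed for the argument, and in any case follows automatically once the trunk words are known to coincide). The single equality $[\gamma_1\theta_1]_Y = [\gamma_2\theta_1]_Y$ is precisely the hypothesis of Lemma~\ref{lem:theta_1}, whose conclusion is $[\gamma_1]_F = [\gamma_2]_F$. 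This yields injectivity, and hence $\hat\theta$ is an embedding, completing the proof.

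The substance of the argument therefore lies not in this final theorem but in the two facts that $Y_X$ is neither too coarse nor too fine a quotient of $Z_X$. That $R = R_1 \cup R_2$ is coarse enough for $\hat\theta$ to be well-defined is handled by the Proposition; the harder direction — that no collapsing occurs in $Y_X$ beyond what the defining relations $J$ of $\FAd(X)$ already force — is exactly Lemma~\ref{lem:theta_1}. I expect that lemma to be the real obstacle: it is proved by considering each of the four ways a relation of $R$ can be applied to $\gamma\theta_1$ and, in each case, producing an element $\alpha \in C_X$ with $[\alpha]_F = [\gamma]_F$ whose image $\alpha\theta_1$ realises the relation, the delicate part being the bookkeeping of the positions $P(\gamma)$ and the splitting idempotents $l(p,\gamma)$ via Lemmas~\ref{lem:tauprod}, \ref{lem:thetaprod} and \ref{lem:simplefold}. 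Granting Lemma~\ref{lem:theta_1}, the embedding statement is immediate.
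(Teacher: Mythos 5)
Your proposal is correct and follows the paper's own proof essentially verbatim: both reduce the theorem to injectivity of $\hat\theta$, extract $[\gamma_1\theta_1]_Y=[\gamma_2\theta_1]_Y$ from equality of images, and conclude by Lemma~\ref{lem:theta_1}, with all the real work residing in that lemma and the well-definedness proposition as you note.
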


\begin{proof}
It suffices to show that $\hat\theta:C_X /J^\sharp\rightarrow Y_X \rtimes F_X$ is injective. To this end, suppose that $\gamma_1, \gamma_2\in C_X$ and
$[\gamma_1]_F\hat\theta=[\gamma_2]_F\hat\theta$. By definition of $\hat\theta$ we have
\[([\gamma_1\theta_1]_Y, \gamma_1\theta_2) = ([\gamma_2\theta_1]_Y, \gamma_2\theta_2),\]
so that immediately from Lemma~\ref{lem:theta_1} we obtain $[\gamma_1]_F=[\gamma_2]_F$.
\end{proof}

\begin{remark}
Notice that with the above embedding, for $x\in X$ we have that
\[([x]_F\hat{\theta})^+=([y_{x,1}]_Y,x)^+=([y_{x,1}]_Y,1)\]
whereas
\[([x]_F^+)\hat{\theta}=([x^+]_F)\hat{\theta}=([e_{x^+,1}]_Y,1)\]
so that we see explicitly that 
\[([x]_F \hat{\theta})^+\neq ([x]_F^+)\hat{\theta},\]
and hence $\hat\theta$ is not an embedding of bi-unary monoids.
\end{remark}

\begin{remark}
\label{rem:alt}
Note that the embedding given by Theorem \ref{thm:two-sided} allows us to give an alternative proof that $\FAd(X)$ (or indeed, $\FLAd(X)$) is not left coherent for $X = \{x_1, x_2, \ldots \}$, by appealing to Corollary \ref{cor:pi}, taking $Y = Y_X$, $G = F_X$, $M = \FAd(X)$ viewed now as a submonoid of $Y_X \rtimes F_X$, with $a=(y_{x_1, 1},x_1), b=(y_{x_2,1},x_2), e_i = (e_{f_i, 1}, 1) \in M$ for $f_i:=(x_2 x_1^i)^+ \in \mathcal{E}_X$. 
\end{remark}

\section{Questions}
\label{sec:questions}
We have exhibited several examples of $E$-unitary inverse monoids constructed from groups where our technique may be applied. Notably we have seen that the Szendrei and Margolis-Meakin expansions of {\em certain groups} are not (left or right) coherent. This leaves open the following question:

\begin{question}
Which Szendrei expansions ${\rm Sz}(G)$ {\em are} coherent? Which Margolis-Meakin expansions $M(G, X)$ {\em are} coherent?
\end{question}

Our result also applies to show that the free objects in certain (quasi-)varieties are not coherent as monoids. We summarise the situation for left and right coherency of free objects in the (quasi-) varieties of monoids considered here and elsewhere in the literature.

\medskip
\noindent
\begin{tabular}{|r| l| l |}
\hline
&Left coherent & Right coherent \\
\hline
free group & yes \cite{G}& yes\cite{G}\\
free monoid & yes \cite{GHR}& yes \cite{GHR}\\
free inverse& for rank $>1$, no \cite[Theorem 7.5]{GH}& for rank $>1$, no \cite[Theorem 7.5]{GH}\\
free ample& for rank $>1$, no \cite[Theorem 7.5]{GH} & for rank $>1$, no \cite[Theorem 7.5]{GH}\\
free left ample&  for rank $>1$, no \cite[Theorem 7.5]{GH} & for all ranks, yes \cite[Theorem 5.7]{GH}\\
free Ehresmann&  for rank $>1$, no Theorem \ref{cor:adequate} & for rank $>1$, no Theorem \ref{cor:adequate}\\
free left Ehresmann&  for rank $>1$, no Theorem \ref{cor:adequate} & ?\\
&(see also Remark \ref{rem:alt})& \\
\hline
\end{tabular}

\medskip
Given the positive result for the free left ample monoid, one is immediately drawn to ask whether a similar result may hold for the free left Ehresmann monoid.
\begin{question}\label{qn:hard}
Is the free left Ehresmann monoid right coherent in all ranks?
\end{question}
We remark that a positive answer to Question~\ref{qn:hard}
is likely to be technically challenging, given the nature of the existing proof the fact that free left ample monoids are right coherent.

Coherency of the {\em monogenic} free objects in the table above has not been explicitly considered in the literature. However, by appealing to several known results, it is straightforward to show that the free monogenic inverse monoid is coherent:
\begin{example}
We show that the free monogenic inverse monoid, $F:=\FI(x)$, is right (and hence also left) coherent. By an abuse of notation, for integers $l,r$ with $l \leq r$ let us write $[x^l,x^r]$ to denote the set of elements $\{x^i: l \leq i \leq r\}$. The elements of $F$ are then the pairs of the form $([x^l,x^r],x^m)$, where $l \leq m \leq r$ and $-l,r \geq 0$, and the idempotents are the elements with $m=0$. Recall that the natural partial order on idempotents is determined by $e \leq f$ if and only if $ef=e$. Thus for $e=([x^l, x^r], 1)$ and $f=([x^p,x^q], 1)$ we see that $e \leq f$ if and only if $[p,q] \subseteq [l,r]$. Clearly the poset $(E(F), \leq)$ satisfies the ascending chain condition. Notice also that there are no infinite antichains of idempotents in this partial order (for an idempotent $e = ([x^l,x^r],1)$ any element incomparable to $e$ must lie in exactly one of the following (finitely many) sets: $L_i = \{f: f = ([x^i,x^q],1) \mbox{ for some } q>r\}$ for $l<i \leq 0$ and $R_j = \{f: f = [x^p,x^{j}] \mbox{ for some } p<l\}$ for $0 \leq j < r$, but notice that any two elements of $L_i$ are comparable, and any two elements of $R_i$ are comparable, so there is no infinite antichain containing $e$). Since $F$ is inverse we see that the principal right ideals of $F$ are precisely the sets $eF$ where $e\in E(F)$, and it is easy to see that if $e,f \in E(F)$, we have $eF \subseteq fF$ if and only if $e \leq f$, from which it is then clear that the set of principal right ideals satisfies the ascending chain condition. It now follows from \cite[Theorem 3.2]{M21} that $F$ is weakly right Noetherian, and since by \cite[Corollary 3.3]{DGHRZ20} any weakly right noetherian regular monoid is right coherent, we obtain that $F$ is right coherent. 
\end{example}
This leaves the remaining questions:

\begin{question}
Is the free ample/left ample/Ehresmann/left Ehresmann monoid of rank $1$ left or right coherent?
\end{question}

Where a `no' answer is obtained in the previous table above, a natural question is whether the corresponding monoid satisfies a weaker finiteness condition, such as  weakly left or right coherent. We summarise what is known in this regard in the table below:

\medskip
\noindent\begin{tabular}{|r| l| l |}
\hline
&Weakly left coherent & Weakly right coherent\\
\hline
free inverse& yes  by \cite[Corollary 3.3]{BGR23} & yes by \cite[Corollary 3.3]{BGR23}\\
free ample& yes by \cite[Corollary 3.12]{CG21} and \ref{prop:rabundweak} &  yes by \cite[Corollary 3.12]{CG21} and \ref{prop:rabundweak}\\
free left ample& yes by \cite[Corollary 3.12]{CG21} and \ref{prop:rabundweak} & yes [it is right coherent]\\
free left Ehresmann& yes by Theorem \ref{thm:FLEisweaklycoherent} & yes by Theorem \ref{thm:FLEisweaklycoherent}\\
free Ehresmann& (finitely left equated by \ref{prop:rabundweak}) & (finitely right equated by \ref{prop:rabundweak})\\
\hline
\end{tabular}

\medskip
This leaves open the following question:

\begin{question}
Is the free Ehresmann monoid  weakly left or weakly right coherent?
\end{question}

Finally, in a different direction, given the role of special semidirect products of the form $\mathcal{S}(G)$ where $G$ is a group in many of the aforementioned results, we propose a general study of coherence properties of monoids of the form $\mathcal{S}(M)$ where $M$ is a monoid.

\begin{question}
For which monoids $M$ is $\mathcal{S}(M)$ (weakly) left/right coherent?
\end{question}

\begin{question}
Can a monoid containing a non-coherent monoid $\mathcal{S}(M)$ as a subsemigroup be left or right coherent?
\end{question}

\end{document}